\newtheorem{thm}{Theorem}[section]
\newtheorem{prop}[thm]{Proposition}
\newtheorem{lem}[thm]{Lemma}
\newtheorem{cor}[thm]{Corollary}
\newtheorem{qst}[thm]{Question}
\theoremstyle{definition}
\theoremstyle{remark}
\newtheorem{rmq}[thm]{Remark}
\newtheorem{exe}[thm]{Example}
\numberwithin{equation}{section}
\newcommand\BB{\mathbb{B}}
\newcommand\R{\mathbb{R}}
\newcommand\C{\mathbb{C}}
\newcommand\F{\mathbb{F}}
\newcommand\Z{\mathbb{Z}}
\newcommand\N{\mathbb{N}}
\newcommand\TT{\mathbb{T}}
\newcommand\Ss{\mathbb{S}}
\newcommand\Cinf{\mathbb{C}_{\infty}}
\newcommand\Kinf{K_{\infty}}
\newcommand\vinf{v_{\infty}}
\newcommand{\per}{\tilde \pi}
\newcommand\lin{\operatorname{lin}}
\newcommand\ii{\mathbf{i}}
\newcommand\XX{\mathbf{X}}
\newcommand\ttt{\mathbf{t}}
\newcommand\PP{\mathfrak{P}}
\newcommand\rdeg{\operatorname{rdeg}}
\newcommand\mdeg{\mathbf{deg}}
\newcommand\fdot{\raisebox{-0.25ex}{\scalebox{1.5}{$\cdot$}}}
\begin{document}

\title{Anderson-Stark Units for ${\mathbb F}_{q}[\theta]$}
\date{\today}
\author[B. Angl\`es \and F. Pellarin \and F. Tavares Ribeiro]{Bruno Angl\`es\textsuperscript{1} \and Federico Pellarin\textsuperscript{2,3}   \and Floric Tavares Ribeiro\textsuperscript{1}}

\address{
Normandie Universit\'e,
Universit\'e de Caen Basse-Normandie, 
CNRS UMR 6139, 
Campus II, Boulevard Mar\'echal Juin, 
B.P. 5186, 
14032 Caen Cedex, France.
}
\email{bruno.angles@unicaen.fr, floric.tavares-ribeiro@unicaen.fr}
\address{ 
Institut Camille Jordan, UMR 5208
Site de Saint-Etienne,
23 rue du Dr. P. Michelon,
42023 Saint-Etienne, France
}
\email{federico.pellarin@univ-st-etienne.fr}

\footnotetext[1]{Normandie Universit\'e,
Universit\'e de Caen, 
UMR 6139, 
Campus II, Boulevard Mar\'echal Juin, 
B.P. 5186, 
14032 Caen Cedex, France.}
\footnotetext[2]{Institut Camille Jordan, UMR 5208
Site de Saint-Etienne,
23 rue du Dr. P. Michelon,
42023 Saint-Etienne, France}
\footnotetext[3]{Supported by the ANR HAMOT}

\subjclass[2010]{Primary : 11R58, 11M38, Secondary : 11G09}

\begin{abstract}
We investigate the arithmetic of special values of a new class of $L$-functions recently introduced by the second author. We prove that these special values are encoded in some particular polynomials which we call Anderson-Stark units. We then use these Anderson-Stark units to prove that $L$-functions can be expressed as sums of polylogarithms.
\end{abstract}

\maketitle

\tableofcontents

\section{Introduction}
A major theme in the arithmetic theory of global function fields is the study of the arithmetic properties of  special values of D. Goss $L$-functions. A typical example of such a function is given by the Carlitz-Goss zeta function $\zeta_A (.),$ where $A=\mathbb F_q[\theta]$ is the polynomial ring in the variable $\theta$ with coefficients in a finite field $\mathbb F_q.$ Its special values are given by the following formula:
$$\forall n\geq 1, \, \zeta_A(n)=\sum_{a\in A_{+}} \frac{1}{a^n} \, \in K_{\infty},$$
where $A_{+}$ is the set of monic elements in $A$ and $K_{\infty}= \mathbb F_q((\frac{1}{\theta})).$
In $1990,$ G. Anderson and D. Thakur proved the following fundamental result  (\cite[ Theorem 3.8.3]{Anderson-Thakur_TensorPowers}):  for $n\geq 1,$ there exists $z_n \in {\rm Lie}(C^{\otimes n})(K_{\infty})$ such that $\exp_n(z_n) \in C^{\otimes n}(A),$ and :
$$\Gamma_n \zeta_A(n) = e_n(z_n),$$
where $\exp_n $ is the exponential map associated to the $n$th tensor power of the Carlitz module $C^{\otimes n},$  $e_n(z_n)$ is the last coordinate of $z_n \in  K_{\infty}^n,$ and $\Gamma_n\in A$ is the Carlitz factorial (we refer the reader to \cite{BrownawellPapanikolas} for the basic properties of $C^{\otimes n}$). This result has recently been generalized by M. A. Papanikolas in \cite{Papanikolas_Logalg} who proved a log-algebraicity Theorem for $C^{\otimes n}$ in the spirit of the  work of G. Anderson in \cite{Anderson_Log-algebraicity}. M. A. Papanikolas applies this log-algebraicity Theorem to obtain remarkable explicit formulas for a large class of special values of  D. Goss Dirichlet $L$-functions. Observe that the $t$-motive associated to the $t$-module  $C^{\otimes n}$ can be understood as the following object : $(A[t], \tau),$ where $t$ is an indeterminate over $K=\mathbb F_q(\theta),$ and $\tau : A[t]\rightarrow A[t]$ is the $\mathbb F_q[t]$-linear map defined as follows :
\[\tau \left(\sum_{k\geq0} a_k t^k \right)= (t-\theta)^n \left(\sum_{k\geq 0} a_k^q t^k\right),\]
where $a_k\in A.$

\bigskip

 Let $s\geq 1$ be an integer and let $t_1, \dots, t_s$ be $s$ indeterminates over $K.$  Consider the following object:  $(A[t_1, \dots, t_s], \tau)$ where $\tau : A[t_1, \dots, t_s]\rightarrow A[t_1, \dots, t_s]$ is  the morphism of $\mathbb F_q[t_1, \dots, t_s]$-modules, semi-linear\footnote{We signal here to avoid confusion that in the rest of the article $\tau$ will denote more generally a morphism semi-linear with respect to $\tau_0$.} with respect to $\tau_0 : A \to A, x\mapsto x^q$, given by :
\[\tau \left( \sum_{i_1, \dots, i_s \in \mathbb N} a_{i_1, \dots , i_s} t_1^{i_1} \cdots t_s^{i_s} \right)= 
(t_1-\theta)\cdots (t_s-\theta) \left(\sum_{i_1, \dots, i_s \in \mathbb N} a_{i_1, \dots , i_s}^q t_1^{i_1} \cdots t_s^{i_s}\right),\]
where $a_{i_1, \dots , i_s}\in A.$ Note  that we have a natural morphism of $\mathbb F_q[t_1, \dots,t_s]$-algebras : 
\[\phi : A[t_1, \dots, t_s]\rightarrow {\rm End}_{\mathbb F_q[t_1, \dots, t_s] } A[t_1, \dots, t_s]\]  
given by $\phi_{\theta}= \theta +\tau.$
Let $\mathbb T_s(K_{\infty})$ be the Tate algebra in the variables $t_1, \dots, t_s,$ with coefficients in $K_{\infty}$. Then $\tau$ extends naturally to a continuous morphism of $\mathbb F_q[t_1, \dots, t_s]$-module on $\mathbb T_s(K_{\infty}).$  
The second author introduced (see \cite{Pellarin_Lseries}, \cite{Perkins_Lseries}, \cite{Perkins_explicitformulae}, \cite{AnglesPellarin_UniversalGT}) for integers $N\in \Z$ and $s\ge 0$ the following $L$-series
\[L(N, s)=\sum_{a\in A_{+}} \frac{a(t_1)\cdots a(t_s)}{a^N},\]
which converge in $\mathbb T_s(K_{\infty})$. If $z$ is another indeterminate, we also set :
\[L(N,s,z) = \sum_{d\ge 0} z^d \sum_{a\in A_{+,d}} \frac{a(t_1)\dots a(t_s)}{a^N} \in K[t_1, \dots, t_s][[z]]\]
These series converge at $z=1$ in $\mathbb T_s(K_{\infty})$  and we have the equality :
\[L(N,s) = L(N, s, z)\mid_{z=1}.\]
Our main goal in this article is the study of the arithmetic properties of the $L(N, s, z), N\in \mathbb Z.$ Let us give a brief description of our principal results.

\medskip

We let $\tau $ act on $K[t_1, \dots, t_s][[z]]$ by :
\[\tau (\sum_{k\geq 0} f_k z^k)= \sum_{k\geq 0}\tau (f_k) z^{k},\]
where $f_k\in K[t_1, \dots, t_s]$. The exponential function associated to $\phi$ is defined by :
\[\exp_{\phi} =\sum_{i\geq 0}\frac{1}{D_i}\tau^i,\]
where $D_0=1,$ and for $i\geq 1,$ $D_i =(\theta^{q^i}-\theta ) D_{i-1}^q.$ We also set
\[\exp_{\phi,z} =\sum_{i\geq 0}\frac{z^i}{D_i}\tau^i.\]
A formulation of the $s$-variable version of Anderson's log-algebraicity Theorem is (see Theorem \ref{logalgthm} and Proposition \ref{propLsigma}):
\[\exp_{\phi,z} (L(1, s, z))\in A[t_1, \cdots, t_s, z]\]
from which we also deduce that, in $\TT_s(\Kinf)$ :
\[\exp_{\phi} (L(1, s))\in A[t_1, \cdots, t_s].\]
This $s$-variable version has been proved in \cite{AnglesPellarinTavares_LseriesTate} as a consequence of a class formula. We give here a more direct proof, close to Anderson's original proof in \cite{Anderson_Log-algebraicity}.

The special elements $\exp_{\phi,z} (L(1, s, z))$ and $\exp_{\phi} (L(1, s))$ play the role of Stark units in our context. 
Let us give an example, for $1\leq s\leq q-1,$ by Proposition  \ref{propLsigma}
we have the following equality in $\mathbb T_s :$
\[L(1, s)=\log_{\phi}(1),\]
where $\log_{\phi}=\sum_{i\geq 0}\frac{1}{l_i}\tau^i$ is the Carlitz logarithm,  $l_0=1$ and for $i\geq 1,$ $l_i=(\theta-\theta^{q^i})l_{i-1},$. We define for $N>0$, the $N$th "polylogarithm"
\[\log_{\phi,N,z}=\sum_{i\geq 0}\frac{z^i}{l_i^N}\tau^i.\]
 Set  $b_0(t)=1,$ and for $n\geq 1,$ $b_n(t)=\prod_{k=1}^{n_1}(t-\theta^{q^k}).$ Let $N\geq 1$ be an integer and let $n\geq 1$ be the unique integer such that $q^r\geq N>q^{r-1}.$ We can then prove (see Theorem \ref{polylogthm} for the precise statement) that there exists a finite set of completely explicit elements $h_j \in A[t_1, \cdots, t_s, z]$, $0\le j \le d$ that are built from the "unit" $\exp_{\phi,z} (L(1,{n+q^r-N}, z)),$ such that :
$$l_{r-1}^{ q^r-N}b_r(t_1)\cdots b_r(t_n) L(N, n, z) = \sum_{j=0}^d \theta^j \log_{\phi, N,z} (h_j).$$

\medskip

The paper is organized as follows : we first (\S 3) introduce a Banach space $\BB_s$ which is a completion of an $s$-variable polynomial ring for a norm similar to the one considered by Anderson in \cite{Anderson_Log-algebraicity}. The study of different natural Carlitz actions on $\BB_s$ allows us to endow $\BB_s$ with an action of the Tate algebra $\TT_s$ and to translate some statements on $\BB_s$ into statements on $\TT_s$. We then (\S 4) prove the $s$-variable log-algebraicity theorem, following the lines of Anderson's proof in \cite{Anderson_Log-algebraicity}, and establish some properties of the special polynomials. We also state two "converses" to the log-algebraicity theorem (Propositions \ref{convlem1} and \ref{convlem2}). In the next section (\S5) we translate the preceding results in $\TT_s$, so that the $L$-functions $L(1,s,z)$ and $L(1,s)$ appear naturally. The last section (\S 6) is devoted to the proof that $L(N,n,z)$ can be expressed as a sum of polylogarithms.

\section{Notation}
Let $\F_q$ be a finite field with $q$ elements, where $q$ is a power of a prime $p$, $\theta$ an indeterminate over $\F_q$, $A= \F_q[\theta]$, $A^*=A\backslash\{0\}$ and $K=\F_q(\theta)$. The set of monic elements (respectively of degree $j\ge 0$) of $A$ is denoted by $A_+$ (respectively $A_{+,j}$). Let $\vinf$ be the valuation on $K$ given by $\vinf(\frac a b) = \deg_\theta b-\deg_{\theta}a$. We identify with $\F_q((\frac 1 \theta))$ the completion $\Kinf$ of $K$ with respect to $\vinf$. Let $\Cinf$ be the completion of an algebraic closure of $\Kinf$. Then $\vinf$ extends uniquely to a valuation on $\Cinf$, still denoted by $\vinf$,  and we set for all $\alpha \in \Cinf$, $|\alpha|_\infty = q^{-\vinf(\alpha)}$.
The algebraic closures of $K$ and $\F_q$ in $\Cinf$ will be denoted by  $\overline K$ and $\overline {\F_q}$. 

Let $\tau$ denote an operator which we let act as the Frobenius on $\Cinf$ : for all $\alpha \in \Cinf,$ $\tau(\alpha) = \alpha^q$. If $R$ is a ring endowed with an action of $\tau$ (for instance, $R$ a subring of $\Cinf$ stable under $\tau$), then we denote by $R[[\tau]]$ the ring of formal series in $\tau$ with coefficients in $R$ subject to the commutation rule : for all $r\in R, \tau.r = \tau(r).\tau$. We also denote by $R[\tau]$ the subring of $R[[\tau]]$ of polynomials in $\tau$.

 The \emph{Carlitz module} is the unique morphism $C_{\fdot} : A \to A[\tau]$ of $\F_q$-algebras determined by $C_\theta = \theta +\tau$.  
If $M$ is an $A$-module endowed with a semi-linear endomorphism $\tau_M$ ($\forall m \in M, \forall a \in A $, $\tau_M(am) = \tau(a)\tau_M(m)$), then $C_{\fdot}$ induces a new action of $A$ on $M$ ; endowed with this action, the $A$-module $M$ is denoted by $C(M)$.

The \emph{Carlitz exponential} is the formal series
\[\exp_C  = \sum_{i\ge 0} \frac 1 {D_i} \tau^i \in K[[\tau]]\]
where $D_0=1$ and for $i\ge 1, D_i = (\theta^{q^i}-\theta)D_{i-1}^q$. The evaluation $\exp_C : \Cinf \to \Cinf$ ; $x\mapsto \exp_C(x) =   \sum_{i\ge 0} \frac 1 {D_i} \tau^i(x)$ defines an entire $\F_q$-linear function on $\C_\infty$ and $\ker (\exp_C : \Cinf \to \Cinf)= \per A$ where $\per$ is the Carlitz period defined by (see \cite[Chapter 3]{Goss_BasicStructures}):
\[\per = \sqrt[q-1]{\theta-\theta^q} \prod_{i\ge 1}\left(1 - \frac{\theta^{q^i}-\theta}{\theta^{q^{i+1}}-\theta}\right) \in \sqrt[q-1]{-\theta}(\theta + \F_q[[\frac 1 \theta]]).\]
The \emph{Carlitz logarithm} is the formal series
\[\log_C  = \sum_{i\ge 0} \frac 1 {l_i} \tau^i \in K[[\tau]]\]
where $l_0=1$ and for $i\ge 1, l_i = (\theta- \theta^{q^i})l_{i-1}$. It satisfies in $K[[\tau]]$ the equality $\log_C. \exp_C = 1$. It defines a function $x \mapsto \log_C(x)$ on $\Cinf$ converging for $\vinf(x) >-\frac q{q-1}$. Moreover, if $\vinf(x) >-\frac q{q-1}$, then $\vinf(x) = \vinf(\exp_C(x)) = \vinf(\log_C(x))$ and $\exp_C\circ \log_C(x) = x = \log_C\circ\exp_C(x)$.
We have the formal identities in $K[[\tau]]$ for all $a\in A$:
\[\exp_C a = C_a \exp_C \ \textrm{ and } \log_CC_a = a\log_C.\]
The identity $\exp_C(ax) = C_a(\exp_C(x))$ holds for all $x\in \Cinf, a \in A$.

The set of $A$-torsion points of $C(\Cinf)$ is denoted by $\Lambda_C\subset C(\overline K)$. Let $a\in A$ with $\deg_\theta a>0$, the $a$-torsion points are precisely the elements $\exp_C(\frac{b\per}a) \in \overline K$ with $b\in A$ and $\deg_\theta b<\deg_\theta a$. Therefore, $\Lambda_C = \exp_C(K\per)$. Since $\exp_C$ is continuous for the topology defined by $\vinf$, the closure of $\Lambda_C$ in $\Cinf$ is the compact set :
\[\mathfrak K = \overline{\Lambda_C} = \exp_C(\Kinf \per) = \exp_C\left(\frac 1\theta\F_q[[\frac 1 \theta]]\per\right) = \sqrt[q-1]{-\theta}\F_q[[\frac 1 \theta]]\]
where the last equality comes from the facts that $\per \in \sqrt[q-1]{-\theta}(\theta + \F_q[[\frac 1 \theta]])$, and that for $\lambda \in \F_q^*$ and $n\ge 1$, $\exp_C\left(\frac{\lambda \per}{\theta^n}\right) \equiv \frac{\lambda \per}{\theta^n} \mod \frac \per {\theta^{n+1}}\F_q[[\frac 1 \theta]]$. It is customary to consider $\Kinf \per$ as an analogue of the imaginary line ; the compact $\mathfrak K$ is then an analogue of the unit circle.
Remark that $\exp_C$ and $\log_C$ define reciprocal automorphisms of $\mathfrak K$.

\section{Some functional analysis}

\subsection{General settings}

Let $s\ge 1$ be a fixed integer and $\XX=(X_1, \dots, X_s)$ be a set of indeterminates over $\Cinf$. We want to consider polynomials $F\in \Cinf[\XX]$ as polynomial functions on $\mathfrak K^s$. Thus we introduce the following norm on $\Cinf[\XX]$ :
\[\|F\| = \max \left\{ |F(x_1, \dots, x_s)|_\infty \ ; \ x_1, \dots, x_s \in \mathfrak K  \right\}.\]
Since $\mathfrak K$ is compact and infinite, this is a well-defined, ultrametric norm of $\Cinf$-algebra. (In particular, for all $F,G\in \Cinf[\XX]$, $\|FG\|\le \|F\|\|G\|$. Moreover, $\|F\|=0 \Rightarrow F=0$ is a consequence of the fact that $\mathfrak K$ is infinite.)

If $\ii=(i_1, \dots, i_s)$ where the $i_j\ge 0$ are integers, then we write $\XX^\ii$ for $X_1^{i_1}\dots X_s^{i_s}$ and $|\ii| = i_1+\dots+i_s$.
\begin{lem}\label{lemdense}
\begin{enumerate}
\item If $\ii \in \N^s$, then $\|\XX^{\ii}\| = q^{\frac{|\ii|}{q-1}}$.
\item Write for $n\ge 1$, $\lambda_{\theta^n} = \exp_C(\frac\per{\theta^n}) \in \Lambda_C$ and let $W\subset \Lambda_C$ be the $\F_q$-vector space spanned by the $\lambda_{\theta^n}$, $n\ge 1$. Then $W$ is dense in $\mathfrak K$. In particular, for all $F\in \Cinf[\XX]$,
\[\|F\| = \sup \left\{ |F(\mathbf x)|_\infty \ ; \ \mathbf x \in \Lambda_C^s  \right\} = \sup \left\{ |F(\mathbf x)|_\infty \ ; \ \mathbf x \in W^s \right\}.\]
\end{enumerate}
\end{lem}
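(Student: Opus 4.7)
For (1), I would exploit the explicit description $\mathfrak K = \sqrt[q-1]{-\theta}\,\F_q[[\tfrac1\theta]]$ recalled at the end of \S2. Every $x\in\mathfrak K$ satisfies $v_\infty(x)\ge v_\infty(\sqrt[q-1]{-\theta})=-\tfrac1{q-1}$, with equality attained at $x=\sqrt[q-1]{-\theta}\in\mathfrak K$. Hence $\max_{x\in\mathfrak K}|x|_\infty=q^{1/(q-1)}$, and by multiplicativity of $|\cdot|_\infty$,
\[\|\XX^{\ii}\|=\max_{x_1,\dots,x_s\in\mathfrak K}\prod_{j=1}^s|x_j|_\infty^{i_j}=q^{|\ii|/(q-1)},\]
the maximum being attained at $(\sqrt[q-1]{-\theta},\dots,\sqrt[q-1]{-\theta})$.

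For the density statement in (2), the key quantitative input is the congruence, already recorded in the excerpt, that $\lambda_{\theta^n}\equiv \per/\theta^n \pmod{(\per/\theta^{n+1})\F_q[[\tfrac1\theta]]}$. Combined with $\per\in\sqrt[q-1]{-\theta}(\theta+\F_q[[\tfrac1\theta]])$, this gives the more convenient form
\[\lambda_{\theta^n}=\sqrt[q-1]{-\theta}\bigl(\theta^{1-n}+g_n\bigr),\qquad v_\infty(g_n)\ge n.\]
Given $y\in\mathfrak K$, I would write $y=\sqrt[q-1]{-\theta}\sum_{k\ge 0}a_k\theta^{-k}$ with $a_k\in\F_q$, and construct iteratively a sequence $y_0=y,y_1,y_2,\dots$ in $\mathfrak K$ and scalars $\mu_n\in\F_q$ such that
\[y_n\in \sqrt[q-1]{-\theta}\,\theta^{-n}\F_q[[\tfrac1\theta]],\qquad y_{n+1}=y_n-\mu_{n+1}\lambda_{\theta^{n+1}}.\]
Since the leading coefficient (in the expansion in powers of $1/\theta$ after pulling out $\sqrt[q-1]{-\theta}$) of $\lambda_{\theta^{n+1}}$ is $1$, the scalar $\mu_{n+1}\in\F_q$ can be chosen to cancel the leading term of $y_n$, raising its valuation by at least $1$. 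Then $w_N:=\sum_{n=1}^N\mu_n\lambda_{\theta^n}\in W$ satisfies $y-w_N=y_N\to 0$, proving $W$ is dense in $\mathfrak K$.

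For the sup equalities in the second assertion of (2), I would use that a polynomial $F\in\Cinf[\XX]$ defines a continuous function on $\Cinf^s$. The obvious inclusions $W\subset\Lambda_C\subset\mathfrak K$ give
\[\sup_{\mathbf x\in W^s}|F(\mathbf x)|_\infty\le \sup_{\mathbf x\in\Lambda_C^s}|F(\mathbf x)|_\infty\le \|F\|,\]
and by density of $W^s$ in $\mathfrak K^s$ together with continuity of $F$, the extreme members coincide, which yields the three equalities.

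\textbf{Main obstacle.} Everything is quite direct once one has a usable normal form for elements of $\mathfrak K$ and for the $\lambda_{\theta^n}$; the only real point to verify is that the leading coefficient of $\lambda_{\theta^n}$ in the natural $\sqrt[q-1]{-\theta}$-basis of $\mathfrak K$ is nonzero, which is exactly the content of the congruence reproduced in \S2 and so requires no further work.
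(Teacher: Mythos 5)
Your proof is correct. For part (1) the substance is the same as the paper's: everything reduces to the fact that $\max_{x\in\mathfrak K}|x|_\infty=q^{\frac1{q-1}}$, combined with the multiplicativity of $|\cdot|_\infty$; the paper extracts this bound from the valuations $\vinf\left(\exp_C\left(\frac{a\per}{b}\right)\right)=\deg_\theta b-\deg_\theta a-\frac q{q-1}\ge-\frac1{q-1}$ of the torsion points, while you read it off the identity $\mathfrak K=\sqrt[q-1]{-\theta}\,\F_q[[\frac 1\theta]]$ recalled in \S2, which is the same fact in packaged form. Where you genuinely diverge is the density statement in part (2): the paper's one-line argument transports the density of $\frac1\theta\F_q[\frac1\theta]$ in $\frac1\theta\F_q[[\frac1\theta]]$ through $\exp_C$, implicitly using that $\exp_C$ is an $\F_q$-linear, valuation-preserving bijection of $\frac\per\theta\F_q[[\frac1\theta]]$ onto $\mathfrak K$, whereas you work directly inside $\mathfrak K$ and run a successive-approximation argument based on the triangular expansions $\lambda_{\theta^n}=\sqrt[q-1]{-\theta}\,(\theta^{1-n}+g_n)$ with $\vinf(g_n)\ge n$. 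Both routes rest on the same congruences recorded in \S2 ($\per\in\sqrt[q-1]{-\theta}(\theta+\F_q[[\frac1\theta]])$ and $\exp_C(\frac{\lambda\per}{\theta^n})\equiv\frac{\lambda\per}{\theta^n}\bmod \frac\per{\theta^{n+1}}\F_q[[\frac1\theta]]$); yours is more explicit and avoids invoking the isometry property of $\exp_C$ as a black box, at the cost of a few extra lines, and it has the small bonus of giving an effective approximation rate ($\vinf(y-w_N)\ge N-\frac1{q-1}$). Your derivation of the sup equalities from density, continuity of $F$ and the inclusions $W\subset\Lambda_C\subset\mathfrak K$ is standard and correct.
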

\begin{proof}
\begin{enumerate}
\item This is a consequence of the fact that if $a,b\in A^*$ with $\deg_\theta a<\deg_\theta b$, then
\[\vinf\left(\exp_C\left(\frac{a\per}b\right)\right) = \vinf \left(\frac{a\per}b\right) = \deg_\theta b-\deg_\theta a -\frac q{q-1}\ge \frac{-1}{q-1}.\]
\item This follows from the fact that the $\F_q$-vector space spanned by the $\frac 1 {\theta^n}$ for $n\ge 1$ is $\frac 1 \theta \F_q[\frac 1 \theta]$ which is dense in $\frac 1 \theta \F_q[[\frac 1 \theta]]$.
\end{enumerate}
\end{proof}
\begin{rmq}
Note that the norm $\|.\|$ is not multiplicative. We shall give an example in the one variable case. We have
\[\|C_\theta(X)\| = \|X\| = q^{\frac 1{q-1}}\]
but $C_\theta(X) = \prod_{\lambda\in \F_q} \left(X-\lambda\exp_C(\frac \per \theta)\right)$, where for all $\lambda \in \F_q$, $\|X-\lambda\exp_C(\frac \per \theta)\| = q^{\frac 1 {q-1}}$.
\end{rmq}

Since $\Lambda_C$ is the torsion set of $C(\Cinf)$, it is naturally endowed with the Carlitz action of $A$ : if $x\in \Lambda_C, a\in A$, then $C_a(x)\in \Lambda_C$, which extends by continuity to $\mathfrak K$. Thus, we get a natural action of the multiplicative monoid of $A$ on the polynomial functions on $\mathfrak K^s$ : 
\begin{equation}\label{action*}
\forall F(\XX) \in \Cinf[\XX], \forall a\in A \ ; \ a*F(\XX) = F(C_a(X_1), \dots, C_a(X_s)).
\end{equation}

Observe that since $\forall a\in A^*, C_a : \Lambda_C \to \Lambda_C$ is surjective, this action is isometric with respect to the norm $\| \cdot \|$.

\subsection{The one variable case}
Set $L=\Kinf(\per)$ and $\pi = \frac{\theta}\per$. Since $\vinf(\pi)= \frac 1 {q-1}$, the valuation ring of $L$ is
\[O_L = \F_q[[\pi]] = \sum_{k=0}^{q-2}\pi^k\F_q[[\frac 1 \theta]],\]
and its maximal ideal is
\[\PP_L = \pi O_L.\]
Recall that since $\Kinf = A\oplus  \frac 1 \theta \F_q[[\frac 1 \theta]]$, we have 
\[\exp_C(\per \Kinf) = \frac 1 \pi \F_q[[\frac 1 \theta]] \subset \PP_L^{-1}.\]
Let $N\in \N=\{0, 1, \dots\}$ and let $N=\sum_{i=0}^rN_iq^i$, with for all $i$, $0\le N_i \le q-1$ be its base-$q$ decomposition. Set $l_q(N) = \sum_{i=0}^r N_i$. We define the polynomial $G_N(X)$ by
\[G_N(X) = \pi^{l_q(N)} \left(\prod_{i=0}^r(\theta^i*X)^{N_i}\right) = \pi^{l_q(N)} \left(\prod_{i=0}^rC_{\theta^i}(X)^{N_i}\right) \in L[X].\]
\begin{lem}
\begin{enumerate}
\item The set $\{G_N(X), N\ge 0\}$ generates $L[X]$ as an $L$-vector space.
\item For $N\in \N$, we have : \[\|G_N(X)\| = 1.\]
\end{enumerate}
\end{lem}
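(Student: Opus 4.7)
The plan is to handle part (1) by a triangularity argument on the $X$-degree and part (2) by combining the submultiplicativity of $\|\cdot\|$ with one carefully chosen evaluation point in $\mathfrak K$.

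For part (1) I would first observe that $C_\theta(X) = \theta X + X^q$ is monic in $X$ of degree $q$, hence by iteration each $C_{\theta^i}(X)$ is monic of $X$-degree $q^i$. Consequently $\prod_{i=0}^r C_{\theta^i}(X)^{N_i}$ is monic of $X$-degree $\sum_i N_i q^i = N$, so $G_N(X) = \pi^{l_q(N)} X^N + (\text{lower-order terms in } X)$ with nonzero leading coefficient $\pi^{l_q(N)} \in L^*$. The family $(G_N)_{N \ge 0}$ is therefore triangular with invertible diagonal for the $X$-degree filtration of $L[X]$, hence an $L$-basis of $L[X]$.

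For the upper bound in part (2), recall from the discussion following (\ref{action*}) that the action $F \mapsto a*F$ of $a\in A^*$ is isometric for $\|\cdot\|$; combined with Lemma \ref{lemdense}(1), this gives $\|C_{\theta^i}(X)\| = \|\theta^i * X\| = \|X\| = q^{1/(q-1)}$. Submultiplicativity then yields
\[\|G_N(X)\| \le |\pi|_\infty^{l_q(N)} \prod_{i=0}^r \|C_{\theta^i}(X)\|^{N_i} = q^{-l_q(N)/(q-1)}\cdot q^{l_q(N)/(q-1)} = 1.\]

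For the matching lower bound I would exhibit one explicit $y \in \mathfrak K$ with $|G_N(y)|_\infty = 1$. Set $u = \sum_{k=1}^{r+1}\theta^{-k}$ and $y = \exp_C(\per u) \in \mathfrak K$. For $0 \le i \le r$ one has $C_{\theta^i}(y) = \exp_C(\per\, \theta^i u)$, and the reduction of $\theta^i u$ modulo $A$ equals $\sum_{j=1}^{r+1-i} \theta^{-j}$, whose leading term is $\theta^{-1}$, of valuation $1$. Hence $\per \theta^i u$ modulo $\per A$ has $\vinf$ equal to $-\frac{q}{q-1}+1 = -\frac{1}{q-1}$; since this lies in the region where $\exp_C$ is an isometry, $\vinf(C_{\theta^i}(y)) = -\frac{1}{q-1}$, i.e.\ $|C_{\theta^i}(y)|_\infty = q^{1/(q-1)}$ for every $0 \le i \le r$. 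Multiplying, $|G_N(y)|_\infty = |\pi|_\infty^{l_q(N)} \prod_i |C_{\theta^i}(y)|_\infty^{N_i} = 1$, so $\|G_N(X)\| \ge 1$. The main subtlety is achieving these valuation estimates simultaneously across all relevant indices $i$; the symmetric shape of $u$, with every digit equal to $1$ in positions $1,\dots,r+1$, is what ensures that no digit vanishes in any of the successive Carlitz translates and hence that no unwanted drop in valuation occurs.
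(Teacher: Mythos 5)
Your proof is correct and follows essentially the same route as the paper: part (1) by the degree count $\deg_X G_N(X)=N$, and part (2) by the upper bound $\|G_N\|\le 1$ together with one explicit point of $\mathfrak K$ where $|G_N|_\infty=1$. The only cosmetic difference is the witness point: the paper evaluates at $\exp_C\left(\frac{\per}{\theta+\alpha}\right)$ with $\alpha\in\F_q^*$, using $\theta^i/(\theta+\alpha)\equiv(-\alpha)^i/(\theta+\alpha)\bmod A$ in place of your digit computation for $u=\sum_{k=1}^{r+1}\theta^{-k}$, but both choices give $\vinf(C_{\theta^i}(\cdot))=-\frac 1{q-1}$ for all $0\le i\le r$ and hence the same conclusion.
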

\begin{proof}
\begin{enumerate}
\item It follows from the fact that $\forall N\ge 0,\deg_X(G_N(X)) = N$.
\item We remark that $\forall \lambda \in \Lambda_C$, $\vinf(G_N(\lambda))\ge 0$ and that if $\alpha\in \F_q^*$ and $\lambda_{\theta+\alpha}=\exp_C\left(\frac \per{\theta +\alpha}\right)$, then $\vinf (G_N(\lambda_{\theta+\alpha}))=0$.
\end{enumerate}
\end{proof}
If $\beta=(\beta_i)_{i\ge 1}$ is a sequence of elements in $\F_q$, we set :
\[\lambda(\beta) = \sum_{i\ge 1} \beta_i\exp_C\left(\frac \per {\theta^i}\right) \in \PP_L^{-1}.\]
Remark that if we set $\mu(\beta) = \sum_{i\ge 1} \frac {\beta_i} {\theta^i} \in \Kinf$, then we have $\lambda(\beta) = \exp_C(\per \mu(\beta))$.
\begin{lem}\label{lemBanachbase1}
Let $\beta=(\beta_i)_{i\ge 1}$ be a sequence of elements in $\F_q$, and $N=\sum_{i=0}^rN_iq^i$ be a non negative integer written in base $q$. Then 
\[G_N(\lambda(\beta)) \equiv \prod_{i=0}^r \beta_{i+1}^{N_i} \mod \PP_L.\]
\end{lem}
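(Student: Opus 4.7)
The plan is to reduce everything to the computation of $\pi\cdot C_{\theta^i}(\lambda(\beta))$ modulo $\PP_L$ and then exploit the exponent identity $\sum_{i=0}^r N_i = l_q(N)$ to distribute the factor $\pi^{l_q(N)}$ across the product defining $G_N$.

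First I would unfold $C_{\theta^i}(\lambda(\beta))$ using $\F_q$-linearity of the Carlitz action together with the identity $C_{\theta^i}(\lambda_{\theta^j}) = C_{\theta^i}\exp_C(\per/\theta^j) = \exp_C(\per/\theta^{j-i})$, which equals $\lambda_{\theta^{j-i}}$ when $j>i$ and $0$ when $j\le i$ since $\ker \exp_C = \per A$. Reindexing gives
\[C_{\theta^i}(\lambda(\beta)) = \sum_{k\ge 1} \beta_{k+i}\, \lambda_{\theta^k}.\]

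Next I would pin down the class of $\pi\lambda_{\theta^k}$ in $O_L/\PP_L$. From $\vinf(\per) = -q/(q-1)$ and the estimate recalled in the preceding paragraphs of the paper, $\lambda_{\theta^k} = \exp_C(\per/\theta^k)\equiv \per/\theta^k \pmod{(\per/\theta^{k+1})\F_q[[1/\theta]]}$. Writing $\per/\theta^k = 1/(\pi\theta^{k-1})$ and computing valuations, this yields $\pi\lambda_\theta \equiv 1 \pmod{\PP_L}$ and, for $k\ge 2$, $\vinf(\pi\lambda_{\theta^k}) \ge 1$, so $\pi\lambda_{\theta^k}\in\PP_L$. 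Combining with Step 1,
\[\pi\, C_{\theta^i}(\lambda(\beta)) \equiv \beta_{i+1} \pmod{\PP_L},\]
and in particular this element lies in $O_L$.

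To finish, I would rewrite $G_N(\lambda(\beta))$ by pushing one factor of $\pi$ inside each Carlitz factor, which is legitimate because $l_q(N) = \sum_{i=0}^r N_i$:
\[G_N(\lambda(\beta)) = \prod_{i=0}^r \bigl(\pi\, C_{\theta^i}(\lambda(\beta))\bigr)^{N_i}.\]
Since each factor lies in $O_L$ and is congruent to $\beta_{i+1}\in\F_q$ modulo $\PP_L$, taking the product of the congruences produces the claimed identity.

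The only genuinely delicate point is Step 2: I need the refined estimate $\lambda_{\theta^k}-\per/\theta^k \in (\per/\theta^{k+1})\F_q[[1/\theta]]$ (rather than merely $\vinf(\lambda_{\theta^k}) = \vinf(\per/\theta^k)$) in order to control $\pi\lambda_\theta$ modulo $\PP_L$ and not just modulo a smaller unit. Once this is in place, the rest of the argument is a bookkeeping exercise.
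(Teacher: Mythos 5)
Your argument is correct and is essentially the paper's own proof, spelled out in more detail: the paper likewise reduces to the observation $\exp_C(\per/\theta)\equiv \frac1\pi \bmod O_L$ (equivalently your refined estimate on $\lambda_{\theta^k}$, which was already recorded in \S 2), deduces $\pi C_{\theta^j}(\lambda(\beta))\equiv \beta_{j+1}\bmod \PP_L$, and distributes $\pi^{l_q(N)}$ over the factors of $G_N$ exactly as you do. No gaps; your Step 2 is the same key point the paper relies on.
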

\begin{proof}
Observe that 
\[\exp_C\left(\frac \per \theta\right) \equiv \frac 1 \pi \mod O_L.\]
Thus, for $j\ge 0$,
\[\pi C_{\theta_j}(\lambda(\beta))\equiv \beta_{j+1} \mod \PP_L.\]
Whence the result.
\end{proof}
\begin{lem}\label{lemBanachbase2}
Let $k,r$ be two integers such that $r\ge 1$ and $1\le k \le q^r$, let $\alpha_1, \dots, \alpha_k\in \F_q^*$ and let $N_1, \dots, N_k$ be $k$ distinct integers in $\{0, \dots, q^r-1\}$. Write $N_i = \sum_{j=0}^{r-1}n_{i,j} q^i$ in base $q$. Then, there exists $\beta_1, \dots, \beta_r \in \F_q$ such that 
\[\sum_{i=1}^k \alpha_i \prod_{j=0}^{r-1}\beta_{j+1}^{n_{i,j}} \neq 0\]
with the convention that $0^0=1$.
\end{lem}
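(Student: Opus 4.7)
The plan is to view the expression $\sum_{i=1}^k \alpha_i \prod_{j=0}^{r-1}\beta_{j+1}^{n_{i,j}}$ as the evaluation at $(\beta_1, \dots, \beta_r)$ of a nonzero polynomial $P(x_1, \dots, x_r) \in \F_q[x_1, \dots, x_r]$ whose degree in each variable is at most $q-1$, and to argue that such a polynomial cannot vanish identically on $\F_q^r$.

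More precisely, I would set
\[P(x_1, \dots, x_r) = \sum_{i=1}^k \alpha_i \prod_{j=0}^{r-1} x_{j+1}^{n_{i,j}}.\]
Since $N_1, \dots, N_k$ are pairwise distinct integers in $\{0, \dots, q^r-1\}$, their base-$q$ expansions yield pairwise distinct tuples $(n_{i,0}, \dots, n_{i,r-1}) \in \{0,\dots, q-1\}^r$; hence the $k$ monomials appearing in $P$ are distinct, and because each $\alpha_i\in\F_q^*$, the polynomial $P$ is nonzero. Furthermore, each variable $x_{j+1}$ appears with degree at most $q-1$ in $P$. The convention $0^0=1$ just says that the evaluation $P(\beta_1,\dots,\beta_r)$ agrees with the expression in the statement (in particular when some $N_i=0$ and some $\beta_j=0$).

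The key step is then the classical linear independence of monomials of degree less than $q$ in each variable as functions on $\F_q^r$. I would prove this by induction on $r$: for $r=1$, a nonzero polynomial of degree $\le q-1$ has at most $q-1$ roots in $\F_q$, so cannot vanish on all of $\F_q$; for $r>1$, write $P = \sum_{e=0}^{q-1} p_e(x_1,\dots,x_{r-1})\, x_r^e$, and observe that if $P$ vanishes at every point of $\F_q^r$, then for each fixed $(a_1, \dots, a_{r-1})\in \F_q^{r-1}$ the single-variable polynomial $\sum_e p_e(a_1,\dots,a_{r-1})\,x_r^e$ of degree $\le q-1$ vanishes on $\F_q$, forcing each $p_e(a_1,\dots,a_{r-1})=0$. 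By induction each $p_e$ is identically zero on $\F_q^{r-1}$, hence $p_e = 0$ as a polynomial (by induction applied to $r-1$), contradicting $P\neq 0$.

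There is no real obstacle here: the only subtlety worth noting is keeping track of the convention $0^0=1$, which is precisely what guarantees that the expression in the lemma equals a genuine polynomial evaluation, so that the linear independence of reduced monomials can be applied directly.
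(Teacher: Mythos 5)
Your proof is correct and takes essentially the same route as the paper: both arguments are an induction on $r$ that peels off one variable and reduces to the one-variable fact that a polynomial of degree at most $q-1$ with not-all-zero coefficients (equivalently, one not divisible by $X^q-X$) cannot vanish on all of $\F_q$. The only difference is presentational: you state the general linear independence of reduced monomials as functions on $\F_q^r$ and argue by contradiction, while the paper constructs the point directly, grouping the $N_i$ by their residues mod $q^{r-1}$, choosing $\beta_r$ first via the $r=1$ case and then applying the induction hypothesis to the remaining variables.
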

\begin{proof}
We proceed by induction on $r$.

If $r=1$, then $k\le q$ and for $1\le j \le k$, $N_i=n_{i,1} \in \{0, \dots, q-1\}$. Since the $N_i$'s are distinct, the polynomial $\sum_{i=1}^k\alpha_iX^{N_i}$ is not divisible by $X^q-X$, and this implies the assertion of the lemma in this case.

We assume now that the lemma is proved for all integers less than $r-1\ge 1$, and we also assume that at least one $N_i$ is $\ge q^{r-1}$.
We define an equivalence relation over the set $\{1, \dots, k\}$: for all $1\le i, i'\le k$, $i\sim i'$ if and only if $n_{i,j} = n_{i',j}$ for all $1\le j \le r-2$ (that is, if $N_i \equiv N_{i'} \mod q^{r-1}$). We denote by $I_1, \dots, I_t$ the equivalence classes and if $i \in I_m$ we define for $1\le j \le r-2$, $n_j^{(m)} = n_{i,j}$ the common value.
Let $\beta_1, \dots, \beta_r\in \F_q^*$, then
\[ \sum_{i=1}^k \alpha_i \prod_{j=0}^{r-1}\beta_{j+1}^{n_{i,j}} = \sum_{m=1}^t \left(\sum_{i\in I_m} \alpha_i \beta_r^{n_{i,r-1}}\right) \prod_{j=0}^{r-2}\beta_{j+1}^{n_{j}^{(m)}}.\]
Now, by the case $r=1$, we can find $\beta_r$ such that the sum $\sum_{i\in I_1} \alpha_i \beta_r^{n_{i,r-1}}$ is not zero and we can apply the induction hypothesis to conclude the proof.
\end{proof}

Let $\Kinf \subseteq E\subseteq \Cinf$ be a field complete with respect to $|\cdot |_\infty$, and $\BB(E)$ denote the completion of $E[X]$ with respect to $\| \cdot \|$.

\begin{thm}\label{thmBanachbase_1}
The family $\{G_N(X), N\ge 0\}$ forms an \emph{orthonormal basis} of the $E$-Banach space $\BB(E)$, that is :
\begin{enumerate}[(i)]
\item Any $F\in \BB(E)$ can be written in a unique way as a convergent series $F=\sum_{N\ge 0} f_N G_N(X)$ with $f_N\in E, N\ge0$ ; $\lim_{N\to \infty}f_N =0$ ;
\item if $F$ is written as above, $\|F\|=\max_{N\ge 0} |f_N|_\infty$.
\end{enumerate}
\end{thm}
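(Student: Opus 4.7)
The plan is to prove (ii) first for a polynomial $F = \sum_{N=0}^d f_N G_N(X) \in E[X]$, and then extend to all of $\BB(E)$ by a standard completion argument. Since $\deg_X G_N = N$, the polynomials $G_0, \ldots, G_d$ form an $E$-basis of polynomials of degree at most $d$, so any such polynomial admits a unique expansion in the $G_N$'s. The easy direction $\|F\| \le \max_N |f_N|_\infty$ is immediate from the ultrametric inequality and $\|G_N\| = 1$.

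The core of the argument is the reverse inequality $\|F\| \ge M := \max_N |f_N|_\infty$. Pick $N_0$ with $|f_{N_0}|_\infty = M$ and set $g_N := f_N/f_{N_0} \in O_E$, so $g_{N_0} = 1$. I will produce a finitely supported sequence $\beta = (\beta_i)_{i \ge 1}$ in $\F_q$ with $|F(\lambda(\beta))|_\infty = M$; since $\lambda(\beta) = \exp_C(\per \mu(\beta)) \in \mathfrak K$, this will give $\|F\| \ge M$. Expanding each $G_N(\lambda(\beta))$ via Lemma \ref{lemBanachbase1},
\[F(\lambda(\beta)) = f_{N_0}\bigl(S(\beta) + \eta(\beta)\bigr), \qquad S(\beta) := \sum_N g_N \prod_i \beta_{i+1}^{N_i},\]
with the error $\eta(\beta) = \sum_N g_N \epsilon_N$ lying in $\PP_L \cdot O_E = \pi O_E$, hence $|\eta(\beta)|_\infty \le |\pi|_\infty < 1$. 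It therefore suffices to choose $\beta$ such that $S(\beta) \not\equiv 0 \pmod{\maxim_E}$ in the residue field of $E$, for then by ultrametricity $|F(\lambda(\beta))|_\infty = M$. This amounts to a non-vanishing of the sum $\sum_\ell \overline{g_{N_{j_\ell}}} \prod_i \beta_{i+1}^{n_{j_\ell,i}}$ over those $N$ with $\overline{g_N} \ne 0$, a set that is nonempty since $\overline{g_{N_0}} = 1$. Lemma \ref{lemBanachbase2} supplies such a $\beta$ once one observes that its proof extends verbatim to $\alpha_i$ in any field containing $\F_q$: it uses only that a nonzero polynomial of degree at most $q-1$ has at most $q-1$ roots in that field.

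To pass from polynomials to $\BB(E)$, uniqueness is immediate from (ii) for polynomials. For existence, approximate $F$ by a Cauchy sequence $F_m \in E[X]$ with expansions $F_m = \sum_N f_N^{(m)} G_N$. Applying (ii) to $F_m - F_{m'}$ shows that $\max_N |f_N^{(m)} - f_N^{(m')}|_\infty \to 0$, so each $(f_N^{(m)})_m$ is Cauchy in $E$ uniformly in $N$. The limits $f_N := \lim_m f_N^{(m)}$ then satisfy $\lim_{N \to \infty} f_N = 0$ (since each $F_m$ is a polynomial), the series $\sum_N f_N G_N$ converges in $\BB(E)$ to $F$, and $\|F\| = \max_N |f_N|_\infty$ follows by passing (ii) to the limit. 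The main obstacle is the reverse inequality in the finite case: one must pin down the norm of $F(\lambda(\beta))$ simultaneously via the $\|\cdot\|$-estimate and the residual non-vanishing, and this is exactly what the combination of Lemmas \ref{lemBanachbase1} and \ref{lemBanachbase2} achieves.
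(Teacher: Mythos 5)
Your proof is correct and follows essentially the same route as the paper: property (i) from $\deg_X G_N(X)=N$, the easy inequality from $\|G_N(X)\|=1$, the reverse inequality by evaluating at a torsion point $\lambda(\beta)$ chosen through Lemmas \ref{lemBanachbase1} and \ref{lemBanachbase2}, and a standard passage to the completion. Your explicit observation that Lemma \ref{lemBanachbase2} extends verbatim to coefficients in the residue field of $E$ is a detail the paper leaves implicit, but the argument is the same.
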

\begin{proof}
It is enough to prove the above properties $(i)$ and $(ii)$ for $F\in E[X]$. Note that property $(i)$ is a consequence of the fact that $\deg_X G_N(X) = N$ for all $N\ge 0$. Let us prove property $(ii)$. It is enough to consider $F=\sum_{i=0}^r x_r G_{N_i}$ with for all $0\le i \le r$, $\vinf(x_i)=0$. We are reduced to proving that $\|F\|=1$, that is, $\|F\|\ge 1$ since we already know the converse inequality. And the existence of $\lambda \in \Lambda_C$ such that $\vinf(F(\lambda))=0$ is a consequence of Lemmas \ref{lemBanachbase1} and \ref{lemBanachbase2}.
\end{proof}
\subsection{The multivariable case}
Let $s\ge 1$ be an integer, we define for a field $\Kinf \subseteq E\subseteq \Cinf$ complete with respect to $| \cdot |_\infty$, $\BB_s(E)$ to be the completion of $E[\XX]$ with respect to $\| \cdot \|$. We write also for short $\BB_s = \BB_s(\Cinf)$. Observe that for $N_1, \dots, N_s \in \N$, we have :
\[\left\|G_{N_1}(X_1) \cdots G_{N_s}(X_s)  \right\| =1.\]
\begin{thm}\label{thmBanachbase_s}
Let $L\subseteq E\subseteq \Cinf$ be complete with respect to $|\cdot|_\infty$, then the family 
\[\{G_{N_1}(X_1) \cdots G_{N_s}(X_s) , N_1, \dots, N_s \in \N\}\]
 forms an orthonormal basis of the $E$-Banach space $\BB_s(E)$, that is :
\begin{enumerate}[(i)]
\item Any $F\in \BB_s(E)$ can be written in a unique way as the sum of a summable family 
\[F=\sum_{(N_1, \dots, N_s) \in \N^s} f_{N_1, \dots, N_s} G_{N_1}(X_1) \cdots G_{N_s}(X_s)\]
 with $\forall N_1, \dots, N_s \in \N, f_{N_1, \dots, N_s}\in E$ ; $f_{N_1, \dots, N_s} $ goes to $0$ with respect to the Fr\'echet filter  ;
\item if $F$ is written as above, $\|F\|=\max\{|f_{N_1, \dots, N_s}|_\infty, N_1, \dots, N_s \in \N\}$.
\end{enumerate}
\end{thm}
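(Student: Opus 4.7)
The plan is to imitate the proof of Theorem \ref{thmBanachbase_1}, exploiting the one-variable result through Lemma \ref{lemBanachbase1} and the identity $\|G_{N_1}(X_1)\cdots G_{N_s}(X_s)\| = 1$ noted above. By density of $E[\XX]$ in $\BB_s(E)$ and a standard Cauchy-sequence argument on coefficients (uniqueness of the expansion together with orthonormality shows that, for any Cauchy sequence of polynomials, each coefficient sequence is itself Cauchy in $E$), it is enough to prove $(i)$ and $(ii)$ for $F\in E[\XX]$. Part $(i)$ is then immediate: the multidegree of $G_{N_1}(X_1)\cdots G_{N_s}(X_s)$ is $(N_1, \ldots, N_s)$, so these polynomials form an $E$-basis of $E[\XX]$ on which every polynomial $F$ decomposes in a unique (finite) way.

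For part $(ii)$, the inequality $\|F\|\le \max_{\ii}|f_{\ii}|_\infty$ comes from the ultrametric inequality applied to the finite sum $F = \sum_{\ii \in I}f_{\ii}G_{\ii}(\XX)$, where $G_{\ii}(\XX) = G_{i_1}(X_1)\cdots G_{i_s}(X_s)$ and $\|G_{\ii}(\XX)\|=1$. For the reverse inequality, I normalize so that $\max_{\ii\in I}|f_{\ii}|_\infty = 1$ and search for $\mathbf{x} \in \mathfrak K^s$ realising $|F(\mathbf{x})|_\infty = 1$. Choose $r\ge 1$ large enough that every component of every $\ii\in I$ lies in $\{0, \ldots, q^r-1\}$, and expand $i_j = \sum_{k=0}^{r-1}n_{i,j,k}q^k$ in base $q$. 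For any sequences $\beta^{(1)},\ldots,\beta^{(s)}$ of elements of $\F_q$ indexed by $\{1, 2, \ldots\}$, applying Lemma \ref{lemBanachbase1} in each variable yields, in the residue field $\overline{\F_q}$ of the valuation ring of $\Cinf$, the reduction
\[\overline{F(\lambda(\beta^{(1)}),\ldots,\lambda(\beta^{(s)}))} = \sum_{\ii \in I}\bar f_{\ii}\prod_{j=1}^s\prod_{k=0}^{r-1}(\beta^{(j)}_{k+1})^{n_{i,j,k}}.\]

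It then remains to find $\beta^{(j)}_{k+1}\in \F_q$ making the right-hand side nonzero. Viewed as a polynomial in the $rs$ variables $Y_{j,k}=\beta^{(j)}_{k+1}$, it has all exponents in $\{0,\ldots,q-1\}$, and the multi-exponents $(n_{i,j,k})_{j,k}$ are pairwise distinct for different $\ii\in I$ since base-$q$ digits are unique. The monomials $\prod_{j,k}Y_{j,k}^{e_{j,k}}$ with $e_{j,k}\in\{0,\ldots,q-1\}$ form an $\F_q$-basis of the $q^{rs}$-dimensional algebra of functions $\F_q^{rs}\to \F_q$, and therefore remain $\overline{\F_q}$-linearly independent as functions $\F_q^{rs}\to \overline{\F_q}$. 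Since some $\bar f_{\ii}\ne 0$ by normalization, a suitable specialization of the $Y_{j,k}$ in $\F_q$ produces a nonzero residue, giving $|F(\mathbf{x})|_\infty=1$. The only genuinely multivariable input is this last linear-independence statement, which replaces Lemma \ref{lemBanachbase2} from the one-variable proof; here no induction is needed, as it follows at once from the tensor product of the one-variable fact that a polynomial of degree less than $q$ is determined by its values on $\F_q$.
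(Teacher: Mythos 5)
Your proof is correct, but it takes a genuinely different route from the paper's in the multivariable step. The paper first establishes the case $s=1$ (Theorem \ref{thmBanachbase_1}, resting on Lemmas \ref{lemBanachbase1} and \ref{lemBanachbase2}) and then proceeds by induction on $s$: it expands $F$ in the $G_{N_i}(X_s)$ with coefficients $\alpha_i\in E[X_1,\dots,X_{s-1}]$, uses the induction hypothesis to pick $(\lambda_1,\dots,\lambda_{s-1})\in\mathfrak K^{s-1}$ realising $\|\alpha_{i_0}\|$ for a coefficient of maximal norm, and then applies the one-variable theorem to $F(\lambda_1,\dots,\lambda_{s-1},X_s)$. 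You instead generalise the one-variable proof directly: after reducing modulo the maximal ideal via Lemma \ref{lemBanachbase1} applied separately in each variable, you replace the inductive Lemma \ref{lemBanachbase2} by the standard fact that the monomials with all exponents in $\{0,\dots,q-1\}$ form an $\F_q$-basis of the functions $\F_q^{rs}\to\F_q$, hence remain linearly independent over $\overline{\F_q}$, so that a combination with some $\bar f_{\ii}\neq 0$ admits a nonvanishing specialisation; the injectivity of $\ii\mapsto(n_{i,j,k})_{j,k}$ is just uniqueness of base-$q$ digits. Your route is more uniform, since a single linear-independence statement absorbs both inductions (on $r$ inside Lemma \ref{lemBanachbase2} and on $s$ in the theorem), whereas the paper's route reuses the one-variable theorem as a black box and produces the realising point of $\mathfrak K^s$ coordinate by coordinate. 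Both arguments ultimately rely on the same engine: reduction to the residue field and nonvanishing of a reduced polynomial of partial degrees less than $q$ at some $\F_q$-point.
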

\begin{proof}
We proceed by induction on $s\ge 1$. The case $s=1$ is the statement of Theorem \ref{thmBanachbase_1}.
Assume now that $s\ge 2$ and that the theorem is true for $s-1$. It will be enough to prove $(i)$ and $(ii)$ for polynomials, and $(i)$ is still an easy consequence of  $\deg_X G_N(X) = N$ for all $N\ge 0$. Write a polynomial
\[F=\sum_{i=0}^r \alpha_r G_{N_i}(X_s) \in E[\XX], \text{ where } \forall 1\le i \le r, \alpha_i \in E[X_1, \dots, X_{s-1}].\]
Write for $1\le i \le r$, the polynomial
\[\alpha_i = \sum_{i_1, \dots, i_{s-1}} \alpha_{i_1, \dots, i_{s-1}}^{(i)} G_{i_1}(X_1)\cdots G_{i_{s-1}}(X_s)\] 
with $\alpha_{i_1, \dots, i_{s-1}}^{(i)} \in E$. Then the induction hypothesis tells that for all $i$ :
\[ \|\alpha_i\| = \max\left\{\left|\alpha_{i_1, \dots, i_{s-1}}^{(i)} \right|_{\infty}, i_1, \dots, i_{s-1} \in \N\right\}.\]
Thus
\[\|F\| \le \max_{1\le i \le r} \|\alpha_i\| = \max \left\{\left|\alpha_{i_1, \dots, i_{s-1}}^{(i)} \right|_{\infty}, i_1, \dots, i_{s-1}, i \in \N \right\}.\]
Let $1\le i_0\le r$ be such that $\|\alpha_{i_0}\| = \max_{1\le i \le r} \|\alpha_i\|$, to prove the converse inequality, we will find $\lambda_1, \dots, \lambda_s\in \mathfrak K^s$ such that $\left|F(\lambda_1, \dots, \lambda_s )\right|_\infty = \|\alpha_{i_0}\|$. Let $\lambda_1, \dots, \lambda_{s-1}\in \mathfrak K^{s-1}$ such that 
\[\left|\alpha_{i_0}(\lambda_1, \dots, \lambda_{s-1})\right|_\infty = \|\alpha_{i_0}\|.\]
Then, by the case $s=1$, 
\[\|F(\lambda_1, \dots, \lambda_{s-1},X_s)\|= \max \left|\alpha_{i_0}(\lambda_1, \dots, \lambda_{s-1})\right|_\infty = \|\alpha_{i_0}\|\]
Therefore, we can find $\lambda\in \mathfrak K$ such that $ \left|F(\lambda_1, \dots, \lambda_{s-1}, \lambda )\right|_\infty = \|\alpha_{i_0}\|$, proving that $\|F\|=\|\alpha_{i_0}\|$ and the theorem.
\end{proof}
For all $N = \sum_{i=0}^rN_iq^i\ge 0$, define
\[H_N(X) =  \left(\prod_{i=0}^r(\theta^i*X)^{N_i}\right) = \pi^{-l_q(N)}G_N(X)  \in \Kinf[X].\]
Then the $H_N$'s generate $\Kinf[X]$ and $\|H_N(X)\| =  q^{\frac{l_q(N)}{q-1}}$. 
If $E$ does not contain $L$, in particular if $E=\Kinf$, then $G_N$ has no longer coefficients in $E$ and there might not exist an orthonormal basis of $\BB(E)$. However, Theorem \ref{thmBanachbase_s} still implies :
\begin{cor}\label{orthogbase_s}
Let $\Kinf\subseteq E\subseteq \Cinf$ be complete with respect to $|\cdot|_\infty$, then the family 
\[\{H_{N_1}(X_1) \cdots H_{N_s}(X_s) , N_1, \dots, N_s \in \N\}\]
 forms an \emph{orthogonal basis} of the $E$-Banach space $\BB_s(E)$, that is :
\begin{enumerate}[(i)]
\item Any $F\in \BB_s(E)$ can be written in a unique way as the sum of a summable family 
\[F=\sum_{(N_1, \dots, N_s) \in \N^s} f_{N_1, \dots, N_s} H_{N_1}(X_1) \cdots H_{N_s}(X_s)\]
 with $\forall N_1, \dots, N_s \in \N, f_{N_1, \dots, N_s}\in E$ ; $|f_{N_1, \dots, N_s}|_\infty q^{{\frac{l_q(N_1)+\cdots+l_q(N_s)}{q-1}}}$ goes to $0$ with respect to the Fr\'echet filter  ;
\item if $F$ is written as above, 
\begin{eqnarray*}
\|F\|&=&\max\{|f_{N_1, \dots, N_s}H_{N_1}(X_1) \cdots H_{N_s}(X_s)|_\infty, N_1, \dots, N_s \in \N\}\\
&=&\max\{|f_{N_1, \dots,  N_s}|_\infty q^{{\frac{l_q(N_1)+\cdots+l_q(N_s)}{q-1}}}, N_1, \dots, N_s \in \N\}.
\end{eqnarray*}
\end{enumerate}
\end{cor}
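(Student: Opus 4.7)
The plan is to deduce Corollary \ref{orthogbase_s} from Theorem \ref{thmBanachbase_s} by using the exact relation $G_N(X) = \pi^{l_q(N)} H_N(X)$ as a change of basis. Since $\pi \in \Kinf \subseteq E$ and $\pi^{-1}\in\Kinf$, this relation is invertible with coefficients in $E$, and transports the orthonormal $G$-basis results over $L$ to orthogonal $H$-basis results over $E$.

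First I would apply Theorem \ref{thmBanachbase_s} to the base field $\Cinf$ (which contains $L$) to expand any $F\in \BB_s(\Cinf) \supseteq \BB_s(E)$ uniquely as a summable sum
\[F = \sum_{(N_1,\dots,N_s)\in \N^s} g_{N_1,\dots,N_s}\, G_{N_1}(X_1)\cdots G_{N_s}(X_s),\qquad g_{N_1,\dots,N_s}\in \Cinf,\]
with $\|F\|=\max |g_{N_1,\dots,N_s}|_\infty$ and $g_{N_1,\dots,N_s}\to 0$ in the Fr\'echet sense. Substituting $G_{N_i}(X_i) = \pi^{l_q(N_i)} H_{N_i}(X_i)$, this rewrites as $F=\sum f_{N_1,\dots,N_s}\,H_{N_1}(X_1)\cdots H_{N_s}(X_s)$ with
\[f_{N_1,\dots,N_s} = \pi^{l_q(N_1)+\cdots+l_q(N_s)}\,g_{N_1,\dots,N_s} \in \Cinf.\]
Using $|\pi|_\infty = q^{-1/(q-1)}$, this instantly yields the norm formula in $(ii)$ and the correct convergence condition in $(i)$.

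Next I need to verify the coefficients $f_{N_1,\dots,N_s}$ lie in $E$ when $F\in \BB_s(E)$. For $F\in E[\XX]$ this is immediate: since $\deg_X H_N(X)=N$ and $H_N(X)\in \Kinf[X]\subseteq E[X]$ has a unit leading coefficient, the family $\{H_{N_1}(X_1)\cdots H_{N_s}(X_s)\}$ is a triangular basis of $E[\XX]$, so $F$ admits a finite expansion with coefficients in $E$. By the uniqueness obtained from Theorem \ref{thmBanachbase_s} (applied over $\Cinf$), these polynomial coefficients must coincide with the $f_{N_1,\dots,N_s}$ above. For a general $F\in \BB_s(E)$, approximate $F$ by a sequence $F_k\in E[\XX]$ in $\|\cdot\|$; the already-proved norm formula shows that the coefficient-sequences $(f^{(k)}_{N_1,\dots,N_s})_k\subset E$ are Cauchy in $E$ uniformly in the multi-index, so they converge in $E$ to the $f_{N_1,\dots,N_s}$, which therefore lie in $E$.

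The only delicate point is this last step, i.e.\ ensuring that the coefficient functionals $F\mapsto f_{N_1,\dots,N_s}$ preserve the $E$-structure under completion. Once the norm identity of $(ii)$ is in hand, this is routine: it shows that each such functional is continuous of norm $q^{-(l_q(N_1)+\cdots+l_q(N_s))/(q-1)}$ on $\BB_s(\Cinf)$, and it sends the dense subspace $E[\XX]$ into $E$, hence $\BB_s(E)$ into $E$ by closedness of $E$ in $\Cinf$. Uniqueness in $(i)$ is inherited from the uniqueness in Theorem \ref{thmBanachbase_s}.
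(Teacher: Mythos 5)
Your proof is correct and follows exactly the route the paper intends: the paper gives no separate argument for this corollary, simply asserting that Theorem \ref{thmBanachbase_s} implies it via the rescaling $G_N(X)=\pi^{l_q(N)}H_N(X)$, which is precisely your change of basis. Your extra care in checking that the coefficients land in $E$ (triangularity of the $H_N$'s on polynomials, then continuity of the coefficient functionals and closedness of $E$ in $\Cinf$) fills in the one detail the paper leaves implicit, and it is done correctly.
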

\subsection{The Carlitz action}
In this section, $\Kinf\subseteq E\subseteq \Cinf$ is a field complete with respect to $|\cdot|_\infty$. 
Note that the action $*$ of $A$ on $E[\XX]$ defined in \eqref{action*} satisfies that for all $a\in A^*$, the map $F\mapsto a*F$ is an isometry on $E[\XX]$. Thus, the action $*$
extends to an action, still denoted $*$, of $A$ on $\BB_s(E)$, such that for all $a\in A^*$, the map $F\mapsto a*F$ is an isometry on $\BB_s(E)$.

Now, instead of considering the simultaneous action of $A$ on each of the $X_j$, we will separate this action into actions on a single variable $X_j$, namely, for $1\le j \le s$, $F\in \BB_s(E)$ and $a\in A$, we set :
\begin{equation}\label{action*j}
a*_j F(\XX) = F(X_1, \dots, X_{j-1}, C_a(X_j), X_{j+1}, \dots, X_s)
\end{equation}
This is still an action of monoid, but if we restrict this action to the set of polynomials in $E[\XX]$ which are $\F_q$-linear in the variable $X_j$, the action $*_j$ induces a structure of $A$-module. Thus we define :
\[ E[\XX]^{\lin} = \left\{F\in E[\XX] ; F \text{ is linear with respect to each of the variables } X_1, \dots, X_s \right\}\]
which is just the sub-$E$-vector space of $E[\XX]$ spanned by the monomials $X_1^{q^{i_1}}\cdots X_s^{q^{i_s}}, i_1, \dots, i_s\in \N$. 
Since the actions $*_j$ and $*_i$ commute and commute with the linear action of $E$, $E[\XX]^{\lin}$ has a structure of module over $E\otimes_{\F_q}A^{\otimes s}$, that is, if $t_1, \dots, t_s$ are new indeterminates, we identify $E\otimes_{\F_q}A^{\otimes s}$ with $E[t_1, \dots, t_s]$ and $E[\XX]^{\lin}$ has a structure of $E[t_1, \dots, t_s]$-module given by :
\begin{equation}\label{actiontj}
\forall 1\le j \le s, \ \  t_j . F(X_1, \dots, X_s) = F(X_1, \dots, X_{j-1}, C_\theta(X_j), X_{j+1}, \dots, X_s).
\end{equation}
We write $\ttt$ for the set of variable ${t_1, \dots, t_s}$ and if $\ii=(i_1, \dots, i_s)\in \N^s$, $\ttt^\ii = t_1^{i_1}\cdots t_s^{i_s}$.

The action defined by formula \ref{actiontj} extends to an action on $E[\XX]$, turning $E[\XX]$ into an $E[\ttt]$-algebra. 
We define the subordinate norm $\|.\|_\infty$ on $E[\ttt]$ by :
\[ \|f\|_\infty =\sup_{F\in E[\XX]\backslash \{0\}}\frac{\|f.F\|}{\|F\|}.\]
\begin{lem}\label{lemNormGauss}
Let $f\in E[\ttt]$, $f=\sum_{\ii} f_{\ii} \ttt^\ii$, then for all $F\in E[\XX]\backslash \{0\}$,
\[\|f\|_\infty = \max_\ii |f_{\ii}|_\infty =\frac{\|f.F\|}{\|F\|}.\]
\end{lem}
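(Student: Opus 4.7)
The plan is to sandwich the ratio $\|f\cdot F\|/\|F\|$ between $\max_{\ii}|f_{\ii}|_{\infty}$ from above (for every nonzero $F$) and from below (by exhibiting one explicit $F$ attaining equality). The upper bound rests on each monomial $\ttt^{\ii}$ acting as an isometry, while the matching lower bound is obtained from the orthogonal basis of Corollary \ref{orthogbase_s} by choosing a starting basis element whose $\ttt^{\ii}$-translates remain pairwise distinct basis elements.

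To make the upper bound precise I first verify that $\|\ttt^{\ii}\cdot F\|=\|F\|$ for every $\ii$ and every $F$. By \eqref{actiontj} extended multiplicatively to $E[\XX]$ and by continuity to $\BB_s(E)$, we have $\ttt^{\ii}\cdot F(\XX)=F(C_{\theta^{i_1}}(X_1),\ldots,C_{\theta^{i_s}}(X_s))$, and each $C_{\theta^{i_j}}$ maps $\mathfrak{K}$ onto $\mathfrak{K}$: it is surjective on $\Lambda_C$ (the $\theta^{i_j}a$-torsion subgroup surjects onto the $a$-torsion), continuous, and $\Lambda_C$ is dense in $\mathfrak{K}$, so its compact image equals $\mathfrak{K}$. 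Consequently $\|\ttt^{\ii}\cdot F\|=\sup_{\mathbf y\in\mathfrak K^s}|F(\mathbf y)|_{\infty}=\|F\|$. The strong triangle inequality then gives
\[
\|f\cdot F\|=\Big\|\sum_{\ii}f_{\ii}\,(\ttt^{\ii}\cdot F)\Big\|\le \max_{\ii}|f_{\ii}|_{\infty}\cdot\|F\|,
\]
so $\|f\|_{\infty}\le\max_{\ii}|f_{\ii}|_{\infty}$.

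For the reverse inequality I exhibit an $F_0$ that saturates the bound. The key observation is that $H_N(X)=\prod_{i=0}^{r}C_{\theta^i}(X)^{N_i}$ satisfies $H_N(C_\theta(X))=H_{Nq}(X)$, because applying $C_\theta$ simply raises each index of $C_{\theta^i}$ by one, and the base-$q$ digit sum is invariant under multiplication by $q$. Iterating in each variable yields $\ttt^{\ii}\cdot H_{\mathbf N}(\XX)=H_{(N_1 q^{i_1},\ldots,N_s q^{i_s})}(\XX)$. Now choose $a$ strictly greater than every coordinate $i_j$ occurring in the (finite) support of $f$ and set $F_0=H_{(q^a,\ldots,q^a)}(\XX)=\prod_{j=1}^s C_{\theta^a}(X_j)$. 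Then $\ttt^{\ii}\cdot F_0=H_{(q^{a+i_1},\ldots,q^{a+i_s})}(\XX)$, and the assignment $\ii\mapsto(q^{a+i_1},\ldots,q^{a+i_s})$ is injective on the support of $f$, so $f\cdot F_0$ is a finite linear combination of pairwise distinct elements of the orthogonal basis of Corollary \ref{orthogbase_s}, each of norm $q^{s/(q-1)}=\|F_0\|$. The orthogonality formula in that corollary yields $\|f\cdot F_0\|=\max_{\ii}|f_{\ii}|_{\infty}\cdot\|F_0\|$, hence $\|f\|_{\infty}\ge\max_{\ii}|f_{\ii}|_{\infty}$, and the sup in the definition of $\|f\|_\infty$ is attained at $F_0$.

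The main delicate point is verifying the shift rule $H_N\circ C_\theta=H_{Nq}$ (one must track both the digit-shift in the product and the invariance $l_q(Nq)=l_q(N)$) and ensuring that the specific choice $\mathbf N=(q^a,\ldots,q^a)$ with $a$ large enough makes $\ii\mapsto(q^{a+i_1},\ldots,q^{a+i_s})$ injective on the support of $f$: once this is arranged, the orthogonal-basis norm formula reads off $\max_{\ii}|f_{\ii}|_{\infty}$ directly.
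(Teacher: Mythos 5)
Your upper bound and your computation with $F_0$ are both correct, and together they do prove the first equality $\|f\|_\infty=\max_{\ii}|f_{\ii}|_\infty$. But the lemma asserts more: the ratio $\|f.F\|/\|F\|$ equals $\max_{\ii}|f_{\ii}|_\infty$ for \emph{every} nonzero $F\in E[\XX]$, not merely for one well-chosen test element. (The universal quantifier is not decorative: the proof of Lemma \ref{lemevalz}, for instance, invokes $\|f_k.F_n\|=\|f_k\|_\infty\|F_n\|$ for the arbitrary coefficients $F_n$ of a power series.) Your argument gives only $\|f.F\|\le\max_{\ii}|f_{\ii}|_\infty\|F\|$ for general $F$; the reverse inequality for an arbitrary $F$ is missing and does not follow from the case $F=F_0$. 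Incidentally, the requirement that $a$ exceed the coordinates in the support of $f$ is superfluous: $\ii\mapsto(q^{a+i_1},\dots,q^{a+i_s})$ is injective for any $a\ge0$, and $F_0=X_1\cdots X_s$ already works.

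To close the gap you must handle a general $F$, which is what the paper does: expand $F=\sum F_{N_1,\dots,N_s}H_{N_1}(X_1)\cdots H_{N_s}(X_s)$ in the orthogonal basis of Corollary \ref{orthogbase_s}, take $(N_{1,0},\dots,N_{s,0})$ lexicographically minimal with $|F_{N_{1,0},\dots,N_{s,0}}|_\infty q^{(l_q(N_{1,0})+\cdots+l_q(N_{s,0}))/(q-1)}=\|F\|$ and $\ii_0$ lexicographically minimal with $|f_{\ii_0}|_\infty=\max_\ii|f_\ii|_\infty$, and verify that the coefficient of $H_{q^{i_{1,0}}N_{1,0}}(X_1)\cdots H_{q^{i_{s,0}}N_{s,0}}(X_s)$ in $f.F$ equals $f_{\ii_0}F_{N_{1,0},\dots,N_{s,0}}$ up to terms of strictly smaller norm, which forces $\|f.F\|\ge \max_\ii|f_\ii|_\infty\,\|F\|$. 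The ingredients you already verified --- the shift rule $t_j.H_N(X_j)=H_{qN}(X_j)$ and the invariance $l_q(qN)=l_q(N)$ --- are exactly what drives this leading-term argument; your $F_0$ computation is the degenerate case in which $F$ is a single basis vector.
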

\begin{rmq}
The lemma says in particular that the norm $\|\cdot\|_\infty$ coincides with the Gauss norm on $E[\ttt]$, which is known to be multiplicative. This property also follows easily from the lemma.
\end{rmq}
\begin{proof}[Proof of the lemma]
Write $F=\sum_{N_1, \dots, N_s} F_{N_1, \dots, N_s}H_{N_1}(X_1)\cdots H_{N_s}(X_s)$ and $M= \max_\ii |f_{\ii}|_\infty$. Remark that for all $N\ge 1$ and for all $1\le i \le s$, $t_i.H_{N}(X_i) = H_{qN}(X_i)$. Since $l_q(N)= l_q(qN)$, we deduce from Corollary \ref{orthogbase_s} that $\|f.F\| \le M\|F\|$. 

Conversely, consider $(N_{1,0}, \dots, N_{s,0})$ the index, minimal for the lexicographic ordering on $\N^s$, such that 
\[|F_{N_{1,0}, \dots, N_{s,0}}|_\infty q^{\frac{l_q(N_{1,0})+\cdots + l_q(N_{s,0})}{q-1}} = \|F\|\]
and $\ii_0 = (i_{1,0}, \dots, i_{s,0})$ the index, minimal for the lexicographic ordering on $\N^s$, such that $M=|f_{\ii_0}|_\infty$.
Then, the coefficient of 
\[H_{q^{i_{1,0}}N_{1,0}}(X_1) \cdots H_{q^{i_{s,0}}N_{s,0}}(X_s)\]
in the expansion of $f.F$ in the basis of Corollary \ref{orthogbase_s} is equal to 
\[ f_{\ii_0}F_{N_{1,0}, \dots, N_{s,0}} \ + \ \text{ terms of lower norm,}\]
whence the result.
\end{proof}
We define 
\begin{itemize}
\item $\BB_s^{\lin}(E)$ the adherence of $E[\XX]^{\lin}$ in $\BB_s(E)$,
\item $\TT_s(E)$ the completion of $E[\ttt]$ for the Gauss norm $\|.\|_\infty$.
\end{itemize}
Recall that $\TT_s(E)$ is the standard Tate algebra in $s$ variables over $E$ (see \cite[\S II.1.]{FresnelVanderPut_RigidGeometry}), that is, the algebra of formal series $\sum_{\ii\in \N^s} f_\ii \ttt^\ii$ with $f_\ii\in E$ going to zero with respect to the Fr\'echet filter.
The action of $E[\ttt]$ extends naturally to an action of $\TT_s(E)$ on $\BB_s(E)$ and on $\BB_s^{\lin}(E)$.
\begin{lem}\label{isomElin}
\begin{enumerate}
\item The family $\{H_{q^{n_1}}(X_1) \cdots H_{q^{n_s}}(X_s) , n_1, \dots, n_s \in \N\}$ forms an orthogonal basis of elements of the same norm $q^{\frac s {q-1}}$ of the $E$-Banach space $\BB_s^{\lin}(E)$,
\item The map 
$\left\{\begin{array}{ccc}
\TT_s(E) & \rightarrow & \BB_s(E)\\
f & \mapsto & f.(X_1\cdots X_s)
\end{array}\right.$
is injective, with for all $f\in \TT_s(E)$, 
\[\|f.(X_1\cdots X_s)\| = q^{\frac s{q-1}} \|f\|_\infty.\]
\item $E[\XX]^{\lin} = E[\ttt] . X_1\cdots X_s$,
\item $\BB_s^{\lin}(E) = \TT_s(E) . X_1\cdots X_s$.
\end{enumerate}
\end{lem}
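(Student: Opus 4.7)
The plan is to deduce the four assertions in the order (3), (1), (2), (4), with Corollary~\ref{orthogbase_s} as the main tool.

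For (3), I compute directly from \eqref{actiontj} and the observation $H_{q^n}(X) = C_{\theta^n}(X)$ that $\ttt^\ii \cdot (X_1\cdots X_s) = H_{q^{i_1}}(X_1)\cdots H_{q^{i_s}}(X_s)$ for every $\ii=(i_1,\dots,i_s)\in\N^s$. Since $\deg_X C_{\theta^n}(X) = q^n$, the family $\{C_{\theta^n}(X)\}_{n\ge0}$ is an $E$-basis of the $E$-span in $E[X]$ of the $\F_q$-linear polynomials $X^{q^n}$; taking products over the $s$ variables gives an $E$-basis of $E[\XX]^{\lin}$, proving (3).

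For (1), the above family is a subfamily of the orthogonal basis of $\BB_s(E)$ from Corollary~\ref{orthogbase_s}, hence is itself orthogonal; each element has norm $q^{s/(q-1)}$ since $l_q(q^n)=1$. Orthogonality implies that the coefficient functionals $F\mapsto f_{N_1,\dots,N_s}$ of Corollary~\ref{orthogbase_s} are continuous on $\BB_s(E)$, so the closure $\BB_s^{\lin}(E)$ of $E[\XX]^{\lin}$ is exactly the set of $F\in\BB_s(E)$ whose coefficients vanish outside indices of the form $(q^{n_1},\dots,q^{n_s})$. Thus the restricted family is indeed an orthogonal basis of $\BB_s^{\lin}(E)$, which establishes (1).

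For (2), given a polynomial $f=\sum_\ii f_\ii \ttt^\ii \in E[\ttt]$, (3) yields
\[f\cdot(X_1\cdots X_s) = \sum_\ii f_\ii\, H_{q^{i_1}}(X_1)\cdots H_{q^{i_s}}(X_s),\]
and the orthogonality from (1) together with Lemma~\ref{lemNormGauss} gives $\|f\cdot(X_1\cdots X_s)\| = q^{s/(q-1)} \max_\ii |f_\ii|_\infty = q^{s/(q-1)}\|f\|_\infty$. The map $f\mapsto f\cdot(X_1\cdots X_s)$ is therefore an isometry up to the scalar $q^{s/(q-1)}$ on $E[\ttt]$; it extends by density and continuity to $\TT_s(E)$ with the same formula, and injectivity is immediate. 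Finally, (4) follows from (1)--(3): the image of $\TT_s(E)$ is closed in $\BB_s(E)$ (by the isometry), contains $E[\XX]^{\lin}$ (by (3)), and hence contains $\BB_s^{\lin}(E)$; conversely (1) exhibits any element of $\BB_s^{\lin}(E)$ as $f\cdot(X_1\cdots X_s)$ for a unique $f\in\TT_s(E)$ obtained by reading off coefficients. The one delicate point is the characterization of $\BB_s^{\lin}(E)$ in (1), which rests on the continuity of the coefficient functionals of Corollary~\ref{orthogbase_s}---a routine but essential consequence of orthogonality.
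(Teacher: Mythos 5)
Your proof is correct and follows essentially the same route as the paper: the identity $\ttt^{\ii}.(X_1\cdots X_s)=H_{q^{i_1}}(X_1)\cdots H_{q^{i_s}}(X_s)$ together with the fact that the $H_{q^n}$ are $\F_q$-linear of degree $q^n$ gives the basis of $E[\XX]^{\lin}$, and Corollary~\ref{orthogbase_s} (with Lemma~\ref{lemNormGauss}) supplies the orthogonality, the norm $q^{\frac{s}{q-1}}$, and the passage to completions. You merely spell out, via the continuity of the coefficient functionals, the density and closed-image arguments that the paper leaves implicit in ``the relation $t_i^n.X_i=H_{q^n}(X_i)$ then implies the other assertions.''
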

\begin{proof}
Since for all $1\le i \le s$ and all $n\ge 0$, $H_{q^{n}}(X_i)$ is an $\F_q$-linear polynomial of degree $q^{n}$, the family $\{H_{q^{n_1}}(X_1) \cdots H_{q^{n_s}}(X_s) , n_1, \dots, n_s \in \N\}$ forms a basis of $E[\XX]^{\lin}$ and the first assertion follows from Corollary \ref{orthogbase_s}. The relation $t_i^n.X_i = H_{q^n}(X_i)$ then implies the other assertions.
\end{proof}
As a consequence, the map $f\mapsto f.X_1\cdots X_s$ defines, up to the normalisation constant $q^{\frac s{q-1}}$, an isometric immersion of $\TT_s(E)$ into $\BB_s(E)$.
Write $A[\XX]^{\lin}=A[\XX]\cap E[\XX]^{\lin}$, we have :
\begin{lem}\label{isomAlin}
Let $f\in E[\ttt]$, then $f.(X_1\cdots X_s) \in A[\XX]^{\lin}$ if, and only if, $f\in A[\ttt]$. In particular, $A[\XX]^{\lin} = A[\ttt] . X_1\cdots X_s$.
\end{lem}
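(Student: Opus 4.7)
The plan is to argue each direction separately, relying on Lemma \ref{isomElin} to move between $E[\ttt]$ and $E[\XX]^{\lin}$ and then using the Carlitz-versus-monomial basis change to detect $A$-integrality.

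For the "if" direction, the action $t_j \cdot F = F(\dots, C_\theta(X_j), \dots)$ iterates to $t_j^n \cdot X_j = C_{\theta^n}(X_j)$, which lies in $A[X_j]$ since $C_\theta = \theta+\tau$ has coefficients in $A$. Therefore, for $f = \sum_\ii a_\ii \ttt^\ii \in A[\ttt]$ (a finite sum), one has
\[f \cdot (X_1 \cdots X_s) = \sum_\ii a_\ii \, C_{\theta^{i_1}}(X_1)\cdots C_{\theta^{i_s}}(X_s),\]
which is manifestly in $A[\XX]$ and linear in each $X_j$ separately.

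For the converse, start from the hypothesis $f\cdot (X_1\cdots X_s)\in A[\XX]^{\lin} \subset E[\XX]^{\lin}$. By Lemma \ref{isomElin}(3) applied over $E$, this forces $f\in E[\ttt]$, so $f=\sum_\ii f_\ii\ttt^\ii$ is a finite sum with $f_\ii\in E$, and
\[f\cdot(X_1\cdots X_s) = \sum_\ii f_\ii \, C_{\theta^{i_1}}(X_1)\cdots C_{\theta^{i_s}}(X_s).\]
The plan is to show each $f_\ii$ lies in $A$ via a change of basis argument. For each single variable, $C_{\theta^i}(X) = X^{q^i} + (\text{strictly lower-degree $\F_q$-linear terms, all with coefficients in }A)$. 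Thus, for any ordering on $\N^s$ refining the product ordering, the transition matrix between $\{C_{\theta^{i_1}}(X_1)\cdots C_{\theta^{i_s}}(X_s)\}_{\ii}$ and $\{X_1^{q^{i_1}}\cdots X_s^{q^{i_s}}\}_\ii$ is block triangular with $1$'s on the diagonal and all off-diagonal entries in $A$. In particular, it is invertible over $A$, so both families generate the same $A$-submodule of $E[\XX]^{\lin}$, namely $A[\XX]^{\lin}$ itself. Hence the coefficients $f_\ii$ of $f\cdot (X_1\cdots X_s)$ in the Carlitz basis all lie in $A$, i.e.\ $f\in A[\ttt]$.

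The final assertion $A[\XX]^{\lin} = A[\ttt]\cdot X_1\cdots X_s$ follows immediately: if $F\in A[\XX]^{\lin}$, then by Lemma \ref{isomElin}(3) there is a unique $f\in E[\ttt]$ with $F = f\cdot(X_1\cdots X_s)$, and the equivalence just proved yields $f\in A[\ttt]$. The only real work is in verifying the $A$-integrality and invertibility of the transition matrix, which is a direct unwinding of the definition of $C_{\theta^n}$.
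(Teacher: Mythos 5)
Your proof is correct and follows essentially the same route as the paper: the forward direction is the observation that $C_{\theta^{i}}(X_j)\in A[X_j]$, and the converse is exactly the paper's appeal to the fact that $t_j.X_j$ is monic over $A$, which you simply spell out as a unipotent triangular (hence $A$-invertible) change of basis between $\{C_{\theta^{i_1}}(X_1)\cdots C_{\theta^{i_s}}(X_s)\}_\ii$ and the monomials $\{X_1^{q^{i_1}}\cdots X_s^{q^{i_s}}\}_\ii$.
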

\begin{proof}
It is clear that if $f\in A[\ttt]$, then $f.(X_1\cdots X_s) \in A[\XX]$. Note that, since 
\[t_1^{i_1}\cdots t_s^{i_s} . H_{N_1}(X_1)\cdots H_{N_s}(X_s) = H_{q^{i_1}N_1}(X_1)\cdots H_{q^{i_s}N_s}(X_s),\]
a consequence of Corollary \ref{orthogbase_s} is that $\BB_s(E)$ is a torsion-free $\TT_s(E)$-module.
Then, the converse is an easy consequence of the fact that $t_i.X_i$ is a monic polynomial in $A[X_i]$.
\end{proof}

\section{Multivariable $\log$-algebraicity}
\subsection{The $\log$-algebraicity theorem}
Let $Z$ be another indeterminate over $\Cinf$. We let $\tau$ act on $\Cinf[\XX][[Z]]$ (or in the one variable case on $\Cinf[X][[Z]]$) via $\tau(F) = F^q$.

Let $F\in A[X]$ ; we form the series
\[\sum_{d\ge 0} Z^{q^d} \sum_{a\in A_{+,d}} \frac{a*F}a\in K[X][[Z]]\]
 and take $\exp_C$ of this series which makes sense in $K[X][[Z]]$. Anderson's $\log$-algebraicity theorem \cite[Theorem 3]{Anderson_Log-algebraicity} for $A$ states then 
\begin{thm}[Anderson]\label{Anderson}
For all $F \in A[X]$ 
\[ \exp_C\left(\sum_{d\ge 0} Z^{q^d} \sum_{a\in A_{+,d}} \frac{a*F}a \right) \in A[X,Z].\]
\end{thm}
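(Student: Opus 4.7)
My plan is to adapt Anderson's original argument using the Banach space $\BB_1(\Cinf)$ of Section 3.

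\textbf{Setup and formal expansion.} Set $S_d(X):=\sum_{a\in A_{+,d}}\frac{a*F}{a}\in K[X]$, so $L(F,Z):=\sum_{d\ge 0}Z^{q^d}S_d\in K[X][[Z]]$ has $Z$-valuation $\ge 1$ and $U(F,Z):=\exp_C(L(F,Z))=\sum_{i\ge 0}D_i^{-1}L(F,Z)^{q^i}$ is therefore well-defined in $K[X][[Z]]$. Since $q$ is a power of $p$, the characteristic-$p$ Frobenius is additive, so $L(F,Z)^{q^i}=\sum_d Z^{q^{d+i}}S_d^{q^i}$ and
\[U(F,Z)=\sum_{m\ge 0}Z^{q^m}\,u_m,\qquad u_m=\sum_{i=0}^m\frac{S_{m-i}^{q^i}}{D_i}\in K[X].\]
The theorem is thus equivalent to $u_m\in A[X]$ for every $m$ together with $u_m=0$ for $m\gg 0$.

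\textbf{Norm decay and archimedean reduction.} Using that the $*$-action is $\|\cdot\|$-isometric, $|a|_\infty=q^d$ for $a\in A_{+,d}$, $\|G^{q^i}\|=\|G\|^{q^i}$ (immediate from the sup-norm definition), and $|D_i|_\infty=q^{iq^i}$ (from $D_i=(\theta^{q^i}-\theta)D_{i-1}^q$), the ultrametric inequality gives
\[\|u_m\|\le\max_{0\le i\le m}q^{-mq^i}\|F\|^{q^i},\]
and once $q^m>\|F\|$ the maximum is attained at $i=0$ and equals $q^{-m}\|F\|$, so $\|u_m\|\to 0$. On the other hand, any nonzero $G\in A[X]$ satisfies $\|G\|\ge 1$: expanding $G=\sum_N f_N H_N$ in the basis of Corollary \ref{orthogbase_s} and using that each $H_N\in A[X]$ is monic of degree $N$, the change of basis $\{X^k\}\leftrightarrow\{H_N\}$ is unipotent triangular over $A$, so $f_N\in A$, and then $\|G\|=\max_N|f_N|_\infty q^{l_q(N)/(q-1)}\ge 1$. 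Hence once $u_m\in A[X]$ is known, the norm decay forces $u_m=0$ for $m\gg 0$.

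\textbf{Integrality — the main obstacle.} It remains to prove $u_m\in A[X]$ for every $m$. The plan is to verify $\mathfrak p$-integrality of the coefficients of $u_m$ at every maximal ideal $\mathfrak p\subset A$ separately: reorganise the sum defining $L(F,Z)$ by grouping the $a\in A_+$ according to their $\mathfrak p$-adic valuation and use the $\mathfrak p$-adic convergence of $\exp_C$ on a suitable disc, together with the identity $\exp_C\circ a=C_a\circ\exp_C$ for $a\in A$, to show that the apparent $1/a$-denominators cancel against the inverse-$D_i$'s produced by $\exp_C$, so that each partial sum of $U(F,Z)$ has $\mathfrak p$-integral coefficients. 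Intersecting over all $\mathfrak p$ yields $u_m\in A[X]$. This $\mathfrak p$-adic bookkeeping is the real content of Anderson's argument and where I expect the difficulty; the norm estimate of the second paragraph only controls the archimedean side.
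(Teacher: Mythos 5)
Your archimedean half is fine: the estimate $\|u_m\|\le\max_i q^{-mq^i}\|F\|^{q^i}$ is exactly the paper's Lemma \ref{logalglem2}, and your replacement for Lemma \ref{logalglem1} (a nonzero polynomial in $A[X]$ has norm $\ge 1$, proved via the monic unipotent change of basis to the orthogonal family $H_N$ of Corollary \ref{orthogbase_s}) is a valid, even slightly more elementary, variant. But the proposal has a genuine gap: the integrality statement $u_m\in A[X]$ — which you correctly identify as the real content — is not proved, only announced as a plan, and the plan as described would not go through. There is no term-by-term cancellation of the $1/a$ denominators against the $1/D_i$: an individual term $\frac{a*F}{a}$ with $P\mid a$ is not $P$-integral, and $1/D_i$ itself acquires $P$ in the denominator as soon as $\deg P\le i$, so "grouping by $\mathfrak p$-adic valuation" plus "$\mathfrak p$-adic convergence of $\exp_C$ on a disc" does not produce $P$-integral partial sums; moreover the whole argument must be formal in $Z$, not $P$-adically analytic.

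The missing idea (the paper's Lemma \ref{logalglem3}, following Anderson) is a global difference equation at each prime $P$, not a local cancellation. One splits $l(F,Z)=\sum_{a\in A_+}\frac{a*(FZ)}{a}$ according to $P\mid a$ or $P\nmid a$, getting $P\,l(F,Z)-P*l(F,Z)=P\,l^*(F,Z)$ with $l^*$ the sum over $a$ coprime to $P$, which has $P$-integral coefficients. Applying $\exp_C$ (which commutes with the $*$-action) gives
\[(C_P-P*)\bigl(\mathfrak L(F,Z)\bigr)=\exp_C\bigl(P\,l^*(F,Z)\bigr)=\sum_{j\ge 0}\frac{P^{q^j}}{D_j}\,l^*(F,Z)^{q^j},\]
and one checks $v_P(P^{q^j}/D_j)\ge 1$, so the right-hand side lies in $PA_{(P)}[X][[Z]]$. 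The crucial final step is then a recursion: writing $C_P\equiv\tau^{\deg P}\bmod PA[\tau]$, the congruence $(C_P-P*)(G)\equiv 0\bmod PA_{(P)}$ for $G=\sum_k G_kZ^{q^k}$ forces $G_k\in A_{(P)}[X]$ by induction on $k$, because the non-integral unknown $PG_k$ appears alone modulo terms already known to be integral. Intersecting over all primes $P$ gives $u_m\in A[X]$, and then your norm argument finishes. Without this difference-equation-plus-induction mechanism (or an equivalent substitute), the proposal does not establish the theorem.
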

The aim of this section is to give a multivariable generalisation of this result. But first, let us give a simple proof of Theorem \ref{Anderson} in the case of $F=X$ and $Z=1$.
\begin{exe}\label{exlogalg1}
Write $X = \exp_C Y$, where $Y=\log_C X \in K[[X]]$. Then $a*X = \exp_C(aY) = \sum_{j\ge 0} \frac{a^{q^j}Y^{q^j}}{D_j}.$ Thus,
\[\sum_{d\ge 0} Z^{q^d}\sum_{a\in A_{+,d}} \frac{a*X}a = \sum_{d\ge 0} Z^{q^d}\sum_{a\in A_{+,d}} \sum_{j\ge 0} \frac{a^{q^j-1}Y^{q^j}}{D_j} = \sum_{j\ge 0} \frac{Y ^{q^j}}{D_j}\sum_{d\ge 0} Z^{q^d} \sum_{a\in A_{+,d}}a^{q^j-1}.\]
But one can evaluate at $Z=1$ since  (see \cite[Example 8.13.9]{Goss_BasicStructures}) $\sum_{a\in A_{+,d}}a^{q^j-1}=0$ for $d\gg j$, and moreover $\sum_{d\ge 0} \sum_{a\in A_{+,d}}a^{q^j-1}=0$ for all $j>0$ while this sum equals $1$ when $j=0$. Therefore, we get :

\[\sum_{d\ge 0} \sum_{a\in A_{+,d}} \frac{a*X}a =Y = \log_C X.\]
\end{exe}

\begin{lem}\label{logalglem1}
If $F\in A[\XX]$ satisfies $\|F\|\le 1$, then $F\in \F_q$.
\end{lem}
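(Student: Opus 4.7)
The plan is to use the orthogonal basis of $\BB_s(\Kinf)$ provided by Corollary \ref{orthogbase_s} and show that, for a polynomial with coefficients in $A$, the expansion coefficients in this basis actually lie in $A$, at which point the norm estimate forces almost all of them to vanish.

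First I would observe that each polynomial $H_N(X) = \prod_{i=0}^{r}C_{\theta^i}(X)^{N_i}$ lies in $A[X]$ and, because each $C_{\theta^i}(X)$ is monic of degree $q^i$ in $X$, $H_N(X)$ is monic of degree $N$. Consequently, $H_{N_1}(X_1)\cdots H_{N_s}(X_s)$ lies in $A[\XX]$ and its expansion in the monomial basis has leading term $X_1^{N_1}\cdots X_s^{N_s}$, with all other monomials $X_1^{i_1}\cdots X_s^{i_s}$ satisfying $i_j \le N_j$ for every $j$. Hence the change-of-basis matrix between the monomials $\XX^\ii$ and the products $H_{N_1}(X_1)\cdots H_{N_s}(X_s)$ is upper triangular (for the componentwise partial order on $\N^s$) with $1$'s on the diagonal. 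By induction on the total degree, any $F\in A[\XX]$ can thus be written as a \emph{finite} $A$-linear combination
\[
F=\sum_{\ii=(i_1,\dots,i_s)} f_{\ii}\, H_{i_1}(X_1)\cdots H_{i_s}(X_s),\qquad f_{\ii}\in A.
\]

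Next I would apply Corollary \ref{orthogbase_s} (which is valid since $\Kinf$ is complete and contains $A$): under the hypothesis $\|F\|\le 1$ one gets
\[
1\ge \|F\|=\max_{\ii}\; |f_{\ii}|_\infty\, q^{\frac{l_q(i_1)+\cdots+l_q(i_s)}{q-1}}.
\]
For each nonzero $f_{\ii}\in A$ one has $|f_{\ii}|_\infty=q^{\deg_\theta f_{\ii}}\ge 1$, so the corresponding factor $q^{(l_q(i_1)+\cdots+l_q(i_s))/(q-1)}$ must be $\le 1$, which forces $l_q(i_1)+\cdots+l_q(i_s)=0$, i.e.\ $\ii=(0,\dots,0)$. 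Therefore $F=f_{\mathbf 0}\in A$ is a constant, and the same inequality also gives $|f_{\mathbf 0}|_\infty\le 1$, hence $\deg_\theta f_{\mathbf 0}\le 0$, i.e.\ $F\in\F_q$.

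I do not expect any real obstacle: the only technical point is the verification that the coefficients $f_{\ii}$ genuinely lie in $A$ (rather than merely in $\Kinf$), which is the triangularity argument in the first paragraph and relies on the fact that the $H_N$'s are monic with integral coefficients. Everything else is an immediate bookkeeping of norms using Corollary \ref{orthogbase_s}.
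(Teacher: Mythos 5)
Your proof is correct, but it takes a genuinely different route from the paper's. You expand $F$ in the orthogonal basis $\{H_{N_1}(X_1)\cdots H_{N_s}(X_s)\}$ of Corollary \ref{orthogbase_s}, use the fact that each $H_N$ is monic with coefficients in $A$ (so the change of basis from the monomials is unitriangular and integral, and the expansion coefficients $f_\ii$ of $F\in A[\XX]$ lie in $A$), and then let the norm formula $\|F\|=\max_\ii |f_\ii|_\infty\, q^{(l_q(i_1)+\cdots+l_q(i_s))/(q-1)}$ together with $|f_\ii|_\infty\ge 1$ for nonzero $f_\ii\in A$ kill every coefficient except the constant one, which must then lie in $\F_q$; all of this is sound. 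The paper argues instead arithmetically: for torsion points $\lambda_j\in\Lambda_C$ the value $F(\lambda_1,\dots,\lambda_s)$ is integral over $A$ and bounded by $1$ in absolute value, hence lies in $\overline{\F_q}$; since $\F_q$ is algebraically closed in the cyclotomic function field $K(\lambda_1,\dots,\lambda_s)$ the value actually lies in $\F_q$, and a polynomial taking only finitely many values on an infinite set must (by induction on $s$) be constant. Your argument leans more heavily on the functional-analytic machinery of Section 3 (ultimately on Lemmas \ref{lemBanachbase1} and \ref{lemBanachbase2}, which underlie Corollary \ref{orthogbase_s}), but once that is in place it is purely formal and even identifies exactly which basis coefficients can survive a given norm bound; the paper's argument bypasses the orthogonal basis entirely and instead imports the arithmetic of cyclotomic function fields. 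Both are complete proofs of the lemma.
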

\begin{proof}
If $\lambda_1, \dots, \lambda_s \in \Lambda_C$, then $F(\lambda_1, \dots, \lambda_s)$ is integral over $A$, and the condition $\|F\|\le 1$ implies that for all $\lambda_1, \dots, \lambda_s \in \Lambda_C$, $F(\lambda_1, \dots, \lambda_s) \in \overline{\F}_q$. But $\F_q$ is algebraically closed in $K(\lambda_1, \dots, \lambda_s)$(see \cite[Corollary to Theorem 12.14]{Rosen_NumbertheoryFunctionfields}), so that $F(\lambda_1, \dots, \lambda_s) \in \F_q$. Now, for any $\lambda_1, \dots, \lambda_{s-1} \in \Lambda_C$, the polynomial $F(\lambda_1, \dots, \lambda_{s-1}, X_s)$  takes at least one value infinitely many times. An easy induction on $s$ then implies that $F$ is constant, that is $F\in \F_q$.
\end{proof}
We define an action of the multiplicative monoid $A^*$ over $\Cinf[\XX][[Z]]$ by letting for $F(\XX, Z) \in \Cinf[\XX][[Z]]$ and $a\in A^*$ :
\[ a*F = F\left(C_a(X_1), \dots, C_a(X_s), Z^{q^{\deg_\theta a}}\right).\]
Observe that $\exp_C$ gives rise to a well-defined endomorphism of $\Cinf[\XX][[Z]]$ and that 
\[\exp_C(K[\XX][[Z]])\subset K[\XX][[Z]].\]
Let  $F\in \Cinf[\XX]$ ; following Anderson, we set for $k<0$ :
\[L_k(F)=Z_k(F)=0\]
and for $k\ge 0$ :
\begin{eqnarray*}
L_{k}(F) &=& \sum_{a\in A_{+,k}} \frac{a*F}a \in \Cinf[\XX], \\
Z_{k}(F) &=& \sum_{j\ge 0} \frac{L_{k-j}(F)^{q^j}}{D_j}\in \Cinf[\XX].
\end{eqnarray*}
Define moreover :
\begin{eqnarray*}
l(F,Z)  &=& \sum_{a\in A_+}  \frac{a*(FZ)}a = \sum_{k\ge 0} Z^{q^k}L_k(F) \in \Cinf[\XX][[Z]],\\
\mathfrak L(F,Z) &=&  \exp_C\left(l(F,Z)\right) = \sum_{k\ge 0} Z_{k}(F) Z^{q^k} \in \Cinf[\XX][[Z]].
\end{eqnarray*}
\begin{lem}\label{logalglem2}
Let $F\in \Cinf[\XX]$ and $k\ge 0$.
\begin{enumerate}
\item $\|L_k(F)\|\le \|F\|q^{-k}$,
\item $\|Z_k(F)\| \le \max_{0\le j \le k} \|F\|^{q^j}q^{-kq^j}$.
\end{enumerate}
\end{lem}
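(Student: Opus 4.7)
The plan is to reduce both inequalities to direct computations using the ultrametric property of $\|\cdot\|$, the fact that the action $*$ is an isometry (noted right after \eqref{action*}), and the standard valuation $v_\infty(D_j)=-jq^j$.

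For (1), I would observe that $L_k(F)$ is a finite sum of the terms $(a*F)/a$ with $a\in A_{+,k}$, so $|a|_\infty=q^k$. Since $*$ is isometric, $\|a*F\|=\|F\|$, and the ultrametric inequality yields
\[\|L_k(F)\|\le \max_{a\in A_{+,k}}\frac{\|a*F\|}{|a|_\infty}= \|F\|\,q^{-k}.\]

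For (2), the key observation is that $L_{k-j}(F)=0$ when $j>k$ (by the convention for negative indices), so $Z_k(F)=\sum_{j=0}^{k} L_{k-j}(F)^{q^j}/D_j$ is a finite sum. I would then note that the sup-norm is power-multiplicative: $\|G^n\|=\sup_{\mathbf x\in \mathfrak K^s}|G(\mathbf x)|_\infty^n=\|G\|^n$ for every $G\in\Cinf[\XX]$ and $n\ge 1$. Combining this with part (1) gives
\[\bigl\|L_{k-j}(F)^{q^j}\bigr\|=\|L_{k-j}(F)\|^{q^j}\le \|F\|^{q^j} q^{-(k-j)q^j}.\]

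The second ingredient is the computation $v_\infty(D_j)=-jq^j$, obtained by a quick induction from the recursion $D_j=(\theta^{q^j}-\theta)D_{j-1}^q$, using $v_\infty(\theta^{q^j}-\theta)=-q^j$. Hence $|D_j^{-1}|_\infty=q^{-jq^j}$, and
\[\left\|\frac{L_{k-j}(F)^{q^j}}{D_j}\right\|\le \|F\|^{q^j}\, q^{-(k-j)q^j-jq^j}=\|F\|^{q^j}\, q^{-kq^j}.\]
Applying the ultrametric inequality to the finite sum defining $Z_k(F)$ and taking the maximum over $0\le j\le k$ yields the claimed bound. The whole argument is routine; there is no genuine obstacle beyond correctly tracking the exponents in the valuation of $D_j$.
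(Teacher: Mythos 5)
Your proof is correct and is essentially the argument the paper intends: the paper's own proof is the one-line remark that the lemma "comes from the definitions and the fact that $\|a*F\|=\|F\|$ for all $a\in A^*$," and your write-up simply supplies the routine details (the ultrametric bound over $a\in A_{+,k}$ with $|a|_\infty=q^k$, power-multiplicativity of the sup-norm, and $v_\infty(D_j)=-jq^j$). No issues.
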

\begin{proof}
This comes from the definitions and the fact that for all $a\in A^*$, $\|a*F\|=\|F\|.$
\end{proof}
We call a monic irreducible polynomial of $A$ a \emph{prime} of $A$.
Let $P$ be a prime of $A.$ Let $F\in K[\XX]$ and let $I$ be a finite subset of $\mathbb N^s$  such that  $F=\sum_{\ii\in I }\alpha_{\ii} \XX^{\ii}\in K[\XX],$ Let $v_P$ be the $P$-adic valuation on $K$ normalized by $v_P(P)=1$, we set :
\[v_P(F) =\inf\{ v_P(\alpha_{\ii}), \ii\in I \}.\]
Recall that we have for $F,G\in K[\XX]$, and $\lambda \in K$ :
\begin{itemize}
\item $v_P(F+G)\ge \inf(v_P(F), v_P(G))$, $v_P(FG)=v_P(F)+v_P(G)$,
\item $v_p(\lambda F) =v_P(\lambda) +v_P(F)$,
\item $v_P(F)=+\infty $ if and only if $F=0.$
\end{itemize}

\begin{lem}\label{logalglem3}
Let $P$ be a prime of $A$. Let $F\in K[\XX]$ be such that $v_P(F)\geq 0$. Then for all $k\ge 0,$ $v_P(Z_k(F))\ge 0$.
\end{lem}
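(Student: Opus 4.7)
The plan is to prove this by induction on $k$. The base case $k=0$ is immediate: $Z_0(F)=L_0(F)=F$, so $v_P(Z_0(F))=v_P(F)\ge 0$ by hypothesis.

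For $0\le k<d:=\deg_\theta P$, every monic polynomial of degree $\le k$ is coprime to $P$ (as any multiple of $P$ has degree at least $d$), and $v_P(D_j)=0$ for $0\le j\le k$ (since $v_P(\theta^{q^i}-\theta)>0$ if and only if $d\mid i$). The defining formula $Z_k(F)=\sum_{j=0}^{k}L_{k-j}(F)^{q^j}/D_j$ then exhibits $Z_k(F)$ as a sum of $P$-integral terms, giving the result in this range.

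The main case is $k\ge d$. Here a term-by-term bound on the defining sum fails, because both $1/a$ (for $a$ divisible by $P$) appearing in $L_n(F)$, and the factor $1/D_j$ for $j\ge d$, can contribute large negative $v_P$. I would decompose $L_n(F)=L_n^{(0)}(F)+\sum_{m\ge 1}P^{-m}\bigl(P^m*L_{n-md}^{(0)}(F)\bigr)$ according to the $P$-divisibility of $a$, where $L_n^{(0)}(F):=\sum_{a\in A_{+,n},\,P\nmid a}(a*F)/a\in A_{(P)}[\XX]$ is manifestly $P$-integral. The crucial cancellation rests on the congruence $C_P\equiv\tau^d\pmod{PA[\tau]}$, a standard property of the Carlitz module reflecting that $\Lambda_C[P]$ is an $A/P$-line on which $\tau$ acts as the Frobenius of $\F_{q^d}$; iterating yields $P^m*G\equiv G(X_1^{q^{md}},\dots,X_s^{q^{md}})\pmod P$ for any $P$-integral $G$. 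Substituting the decomposition into $Z_k(F)$, expanding the $q^j$-th powers via the characteristic-$p$ Frobenius, and grouping by pole order in $P$, the negative-$v_P$ contributions from $P^{-mq^j}$ and from $v_P(D_j)=\sum_{r=1}^{\lfloor j/d\rfloor}q^{j-rd}$ are shown to match and cancel modulo a $P$-integral remainder that reorganizes as a $P$-integral combination of the $Z_{j'}(F)$ for $j'<k$, to which the induction hypothesis applies.

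The principal obstacle is the combinatorial bookkeeping of these cancellations: pairing, for each $(j,m)$, the pole of order $mq^j$ coming from $P^{-mq^j}$ with its counterpart hidden inside $1/D_j$ via relations of the form $D_j=D_{j-d}^{q^d}\cdot[d][d+1]^q\cdots[j]^{q^{j-d-1}}$. A more conceptual route works at the level of the generating function $\mathfrak L(F,Z)=\exp_C(l(F,Z))$: using the clean identity $l(F,Z)=\sum_{m\ge 0}P^{-m}\bigl(P^m*l^{(0)}(F,Z)\bigr)$ (for $l^{(0)}$ defined analogously) together with $\exp_C\circ(P^m*)=(P^m*)\circ\exp_C$ and $\exp_C(PY)=C_P\exp_C(Y)$, one derives a functional equation for $\mathfrak L(F,Z)$ that directly implies $\mathfrak L(F,Z)\in A_{(P)}[\XX][[Z]]$, and hence $v_P(Z_k(F))\ge 0$ for every $k$.
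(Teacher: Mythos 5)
Your second, ``conceptual'' route is the one the paper actually takes (your first route, as you yourself acknowledge, stalls at unresolved combinatorial bookkeeping, so I set it aside). The decomposition of $l(F,Z)$ by the exact power of $P$ dividing $a$, rewritten in one step as $Pl(F,Z)-P*l(F,Z)=Pl^{(0)}(F,Z)$, followed by an application of $\exp_C$ using $\exp_C(PY)=C_P\exp_C(Y)$ and the commutation of $\exp_C$ with $*$, gives exactly the paper's functional equation
\[(C_P-P*)\bigl(\mathfrak L(F,Z)\bigr)=\exp_C\bigl(Pl^{(0)}(F,Z)\bigr)=\sum_{j\ge 0}\frac{P^{q^j}}{D_j}\,l^{(0)}(F,Z)^{q^j},\]
whose right-hand side lies in $PA_{(P)}[\XX][[Z]]$ because $v_P(P^{q^j}/D_j)=q^j-\sum_{1\le rd\le j}q^{j-rd}\ge q^j-(q^j-1)=1$ --- a computation you should at least record, since without it the right-hand side is not visibly divisible by $P$.

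The genuine gap is in the words ``directly implies''. The implication
\[(C_P-P*)(G)\in PA_{(P)}[\XX][[Z]]\ \Longrightarrow\ G\in A_{(P)}[\XX][[Z]],\qquad G=\sum_{k\ge0}G_kZ^{q^k},\]
is the real content of the lemma and is not formal: the operator $C_P-P*$ is not invertible over $A_{(P)}$ in any obvious sense. One must write $C_P=\sum_{i=0}^d[P]_i\tau^i$ with $[P]_0=P$, $[P]_i\in PA$ for $0<i<d$ and $[P]_d=1$, extract the coefficient of $Z^{q^k}$, namely $PG_k+\sum_{i=1}^{d-1}[P]_iG_{k-i}^{q^i}+G_{k-d}^{q^d}-P*G_{k-d}$, and induct on $k$: once $G_{k-1},\dots,G_{k-d}\in A_{(P)}[\XX]$ are known, the middle terms lie in $PA_{(P)}[\XX]$, and $G_{k-d}^{q^d}-P*G_{k-d}\in PA_{(P)}[\XX]$ because $C_P(X)\equiv X^{q^d}\pmod{PA[X]}$ and $c^{q^d}\equiv c\pmod{PA_{(P)}}$ for $c\in A_{(P)}$ (the residue field being $\F_{q^d}$); hence $PG_k\in PA_{(P)}[\XX]$ and $G_k\in A_{(P)}[\XX]$. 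This recursion is precisely the induction on $k$ you announced at the outset but never carried out in the range $k\ge d$; with it supplied, your argument closes and coincides with the paper's proof.
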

\begin{proof} 
The proof is essentially the same as \cite[Proposition 6]{Anderson_Log-algebraicity}. We recall it because some details will be needed in the proof of Proposition \ref{ConvAlg1}.

Set $A_{(P)}=\{ x\in K, v_P(x)\ge 0\}.$ Let $d$ be the degree of $P$, we have in $A[\tau]$ : $C_P \equiv \tau^d \mod PA[\tau]$. We prove that if $G=\sum_{k\ge 0} G_kZ^{q^k}\in K[\XX][[Z]]$ satisfies 
\[(C_P-P*)(G)\in PA_{(P)}[\XX][[Z]],\]
then $\forall k\ge 0, G_k\in A_{(P)}[\XX]$. Set $G_k=0$ if $k<0$ and write $C_P = \sum_{i=0}^d [P]_i\tau^i$ where $[P]_0=P$, $[P]_i\in PA$ if $i<d$ and $[P]_d=1$. We have $(C_P-P*)(G) = \sum_{k\ge 0} H_k Z^{q^k}$ with for all $k\ge 0$,
\[H_k = \sum_{i=0}^d [P]_i \tau^i(G_{k-i}) - P*G_{k-d} = PG_k + \sum_{i=1}^{d-1} [P]_i \tau^i(G_{k-i}) + \tau^d(G_{k-d})- P*G_{k-d} \in PA_{(P)}[\XX].\]
In particular, $H_0 = PG_0 \in PA_{(P)}[\XX]$ so that $G_0\in A_{(P)}[\XX]$. Now, by induction on $k$, if we know that $G_{k-i}\in A_{(P)}[\XX]$ for $i=1, \dots, d$, then $\tau^d(G_{k-d})- P*G_{k-d} \in PA_{(P)}[\XX]$ and we deduce that $G_k \in PA_{(P)}[\XX]$.

Define $l^*(F,Z) =\sum_{a\in A_+, P\nmid a}  \frac{a*(FZ)}a\in A_{(P)}[\XX][[Z]]$, we have 
\begin{eqnarray*}
l(F,Z) &=& \sum_{a\in A_+, P\mid a}  \frac{a*(FZ)}a + \sum_{a\in A_+, P\nmid a}  \frac{a*(FZ)}a  \\
&=& \sum_{a\in A_+}  \frac{(aP)*(FZ)}{aP} +l^*(F,Z)  = \frac 1 P\left( P*l(F,Z)\right) +l^*(F,Z)
\end{eqnarray*}
which yields the relation :
\[Pl(F,Z)-P*l(F,Z)=Pl^*(F,Z).\]
Remark that the action $*$ commutes with $\tau$, and thus with $\exp_c$, thus if we apply $\exp_C$, we get
\[(C_P-P*)\left(\mathfrak L(F,Z)\right) = \exp_C(Pl^*(F,Z)) = \sum_{j\ge 0} \frac {P^{q^j}}{D_j}l^*(F,Z)^{q^j}\]
and since $\forall j\ge 0$, $v_P(\frac {P^{q^j}}{D_j})\ge 1$, we get $(C_P-P*)\left(\mathfrak L(F,Z)\right)\in PA_{(P)}[\XX][[Z]]$ whence $\mathfrak L(F,Z)\in A_{(P)}[\XX][[Z]]$.
\end{proof}
We can now state and prove the multivariable $\log$-algebraicity theorem :
\begin{thm}\label{logalgthm}
Let $F\in A[\XX]$, then 
\[\mathfrak L(F,Z) =  \exp_C\left(\sum_{a\in A_+}  \frac{a*(FZ)}a\right) \in A[\XX,Z].\]
\end{thm}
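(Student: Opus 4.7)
The plan is to assemble Theorem~\ref{logalgthm} from the three preceding lemmas. A priori, $\mathfrak L(F,Z)=\sum_{k\ge 0} Z_k(F) Z^{q^k}$ lives only in $K[\XX][[Z]]$, and the task splits into two independent improvements: integrality of each coefficient $Z_k(F)$, and the vanishing of $Z_k(F)$ for $k$ sufficiently large.

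For integrality, fix a prime $P$ of $A$. Since $F\in A[\XX]$, certainly $v_P(F)\ge 0$, so Lemma~\ref{logalglem3} gives $Z_k(F)\in A_{(P)}[\XX]$ for every $k\ge 0$. Intersecting these inclusions over all primes $P$ and using $A=\bigcap_P A_{(P)}$ inside $K$, I conclude $Z_k(F)\in A[\XX]$ for every $k\ge 0$, i.e.\ $\mathfrak L(F,Z)\in A[\XX][[Z]]$.

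For vanishing, I combine Lemma~\ref{logalglem2}, which yields
\[\|Z_k(F)\|\le \max_{0\le j\le k}\|F\|^{q^j}q^{-kq^j}=\max_{0\le j\le k}\bigl(\|F\|q^{-k}\bigr)^{q^j},\]
with Lemma~\ref{logalglem1}. Once $k$ is large enough that $\|F\|q^{-k}<1$, the maximum above is attained at $j=0$ and equals $\|F\|q^{-k}$, which is strictly less than $1$. Then $Z_k(F)\in A[\XX]$ with $\|Z_k(F)\|<1$; Lemma~\ref{logalglem1} forces $Z_k(F)\in \F_q$, and since a nonzero element of $\F_q$ has norm $1$ under $\|\cdot\|$, the strict inequality upgrades this to $Z_k(F)=0$. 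Hence only finitely many $Z_k(F)$ are nonzero, and $\mathfrak L(F,Z)\in A[\XX,Z]$, as claimed.

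The real work was already done in Lemma~\ref{logalglem3} via the functional equation $(C_P-P*)\mathfrak L(F,Z)=\exp_C(Pl^*(F,Z))$ together with an induction on the $Z$-coefficients; beyond that, the one point worth flagging is that one needs the \emph{strict} norm bound $\|Z_k(F)\|<1$ (not merely $\le 1$) to rule out nonzero constants in $\F_q$, which is precisely what the estimate of Lemma~\ref{logalglem2} delivers for $k$ large.
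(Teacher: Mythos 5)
Your proof is correct and follows essentially the same route as the paper's: Lemma~\ref{logalglem3} applied to every prime $P$ (together with $A=\bigcap_P A_{(P)}$) gives integrality of the coefficients, and the norm estimate of Lemma~\ref{logalglem2} combined with Lemma~\ref{logalglem1} kills $Z_k(F)$ for $k$ large. The only difference is that you spell out the intersection over primes and the strictness of the norm bound, both of which the paper leaves implicit.
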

\begin{proof}
By Lemma \ref{logalglem3}, for all $k\ge 0$, $Z_k(F) \in A[\XX]$. If $k_0\ge 0$ is the smallest integer such that $\|F\|\le q^{k_0}$, then by Lemma \ref{logalglem2},  for all $k>k_0$, $\|Z_k(F)\|<1$. Therefore, Lemma \ref{logalglem1} tells that $Z_{k_0}(F) \in \F_q$ and for all $k>k_0$, $Z_k(F)=0$.
\end{proof}
The previous theorem can also be obtained as a consequence of a class formula for a Drinfeld module on a Tate algebra (see \cite{AnglesPellarinTavares_LseriesTate}).
\subsection{The special polynomials}
If $s\ge 1$ is an integer, we define the special polynomial :
\[\Ss_s = \Ss_s(\XX,Z) = \mathfrak L(X_1\cdots X_s,Z) \in A[\XX,Z].\]

Recall that Anderson's special polynomials are the one variable polynomials $S_m(X,Z) = \mathfrak L (X^m,Z)$. We recover $S_m(X,Z)$ from $\Ss_m$ by specializing each of the $X_j, 1\le j \le m$, to $X$. We establish in this section some properties of the polynomials $\Ss_s$.

The following proposition is used to compute explicitly the polynomial $\mathfrak L(F,Z)$.
\begin{prop}\label{propSs}
\begin{enumerate}
\item The polynomial $\Ss_s(\XX, Z)$ is $\F_q$-linear with respect to each of the variables $X_1, \dots, X_s, Z$; in particular, it is divisible by $X_1 \cdots X_sZ$.
\item If $r \in \{1, \dots, q-1\}$ satisfies $s\equiv r \mod q-1$, then :
\[\deg_Z \Ss_s \le q^{\frac{s-r}{q-1}}.\]
In particular, if $1\le s \le q-1$, we have :
\[\Ss_s = X_1\cdots X_s Z.\]
\end{enumerate}
\end{prop}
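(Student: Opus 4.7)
The plan is to prove (1) by tracing $\F_q$-linearity through the explicit expansion $\Ss_s = \sum_{k \ge 0} Z_k(X_1 \cdots X_s)\,Z^{q^k}$, and to prove (2) by combining the upper bound of Lemma \ref{logalglem2} with a sharp lower bound on the norm of a nonzero element of $A[\XX]^{\lin}$ supplied by Lemmas \ref{isomElin} and \ref{isomAlin}.

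For (1), I would observe that $a*(X_1 \cdots X_s) = C_a(X_1) \cdots C_a(X_s)$ is $\F_q$-linear in each $X_i$ because each $C_a$ is. Summation and division by scalars preserve this property, so each $L_k(X_1 \cdots X_s)$ is $\F_q$-linear in each $X_i$; and the identity $(\sum c_{\mathbf n} X_1^{q^{n_1}} \cdots X_s^{q^{n_s}})^{q^j} = \sum c_{\mathbf n}^{q^j} X_1^{q^{n_1+j}} \cdots X_s^{q^{n_s+j}}$ in characteristic $p$ ensures that the same property holds for each $L_{k-j}(F)^{q^j}$ and hence for each $Z_k(F)$. Since $Z$ enters $\Ss_s$ only through $Z^{q^k}$, $\Ss_s$ is also $\F_q$-linear in $Z$. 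Because an $\F_q$-linear polynomial vanishes at $0$, each of the indeterminates $X_1, \dots, X_s, Z$ divides $\Ss_s$, and as these are pairwise coprime in the UFD $A[\XX, Z]$ the divisibility by $X_1 \cdots X_s\, Z$ follows.

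For (2), set $F = X_1 \cdots X_s$. Since $F = H_1(X_1) \cdots H_1(X_s)$, Corollary \ref{orthogbase_s} gives $\|F\| = q^{s/(q-1)}$. By (1) and Theorem \ref{logalgthm}, each $Z_k(F)$ lies in $A[\XX]^{\lin}$, so by Lemma \ref{isomAlin} it can be written $Z_k(F) = f_k \cdot X_1 \cdots X_s$ with $f_k \in A[\ttt]$. Lemma \ref{isomElin}(2) then gives $\|Z_k(F)\| = q^{s/(q-1)} \|f_k\|_\infty$, and since the Gauss norm is $\ge 1$ on $A[\ttt] \setminus \{0\}$, we obtain the key lower bound $\|Z_k(F)\| \ge q^{s/(q-1)}$ whenever $Z_k(F) \ne 0$. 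Lemma \ref{logalglem2} furnishes the upper bound
\[ \|Z_k(F)\| \le \max_{0 \le j \le k} q^{q^j(s/(q-1)-k)}. \]
Writing $s = (q-1)m + r$ with $r \in \{1, \dots, q-1\}$, I would check case by case that for every $k \ge m+1$ this upper bound is strictly less than $q^{s/(q-1)}$: when $r \le q-2$ the exponent $s/(q-1) - k$ is negative, so the maximum is attained at $j = 0$ and equals $q^{s/(q-1)-k} < q^{s/(q-1)}$; when $r = q-1$, $s/(q-1) = m+1 \le k$, and the maximum is $\le 1 < q^{s/(q-1)}$. In either case $Z_k(F) = 0$ for $k \ge m+1$, giving $\deg_Z \Ss_s \le q^m = q^{(s-r)/(q-1)}$. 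For $1 \le s \le q-1$ we have $m = 0$, whence $\deg_Z \Ss_s \le 1$; combined with $\F_q$-linearity in $Z$, this forces $\Ss_s = Z_0(F)\,Z = L_0(F)\,Z = X_1 \cdots X_s\, Z$, since $A_{+,0} = \{1\}$ and $D_0 = 1$.

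The main difficulty is obtaining the sharp lower bound $\|Z_k(F)\| \ge q^{s/(q-1)}$: the naive estimate coming from $|b|_\infty \ge 1$ for nonzero $b \in A$ would only give $\|Z_k(F)\| \ge 1$, too weak by a factor of $q^{s/(q-1)}$. The isometric identification (up to the normalisation $q^{s/(q-1)}$) of $\TT_s(E)\cdot X_1 \cdots X_s$ with $\TT_s(E)$ inside $\BB_s(E)$, supplied by Lemmas \ref{isomElin} and \ref{isomAlin}, is precisely what upgrades the $A[\ttt]$ Gauss-norm bound $\ge 1$ to the required estimate.
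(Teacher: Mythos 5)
Your proof is correct. For part (1) you spell out what the paper dismisses as obvious, and for part (2) you follow the same skeleton as the paper — the upper bound $\|Z_k(X_1\cdots X_s)\|\le\max_{0\le j\le k}q^{q^j(s/(q-1)-k)}$ from Lemma \ref{logalglem2} together with the integrality $Z_k(X_1\cdots X_s)\in A[\XX]$ from Theorem \ref{logalgthm} — but you force the vanishing by a different mechanism. The paper shows $\|Z_k\|\le 1$ for $k\ge s/(q-1)$ and invokes Lemma \ref{logalglem1} (whose proof rests on the arithmetic fact that $\F_q$ is algebraically closed in the cyclotomic function fields $K(\lambda_1,\dots,\lambda_s)$) to conclude $Z_k\in\F_q$, then kills the constant using divisibility by $X_1\cdots X_s$. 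You instead write $Z_k=f_k\cdot(X_1\cdots X_s)$ with $f_k\in A[\ttt]$ via Lemma \ref{isomAlin} and use the isometry of Lemma \ref{isomElin} to get the lower bound $\|Z_k\|\ge q^{s/(q-1)}$ for $Z_k\neq 0$, which is incompatible with the upper bound for $k\ge m+1$. Both routes yield exactly the same threshold $k\ge m+1$ and hence the same degree bound; yours trades the arithmetic input of Lemma \ref{logalglem1} for the functional-analytic apparatus of Section 3 (orthogonal bases and the Gauss-norm identification $\TT_s\cdot X_1\cdots X_s\hookrightarrow\BB_s$), and has the minor aesthetic advantage of not needing the separate divisibility step to pass from $Z_k\in\F_q$ to $Z_k=0$.
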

\begin{proof}
The first assertion is obvious.
By Lemmas  \ref{logalglem1}, \ref{logalglem2} and  \ref{logalglem3}, $Z_k(X_1\cdots X_s) \in \F_q$ if $k\ge \frac s{q-1}$. But since $X_1 \cdots X_s$ divides $Z_k(X_1\cdots X_s)$, we get $Z_k(X_1\cdots X_s)=0$ for $k\ge \frac s{q-1}$. The last part comes from the congruence
\[ \Ss_s \equiv X_1\cdots X_s Z \mod Z^q.\]
\end{proof}
\begin{cor}
Let $s, k_1, \dots, k_s \ge 1$ be integers such that $\sum_{j=1}^sk_j\le q-1$ and let  $a_{1,1}, \dots, a_{1,k_1}$, $\dots, a_{s,1}$, $\dots, a_{s,k_s} \in A$.
Set
\[G = (a_{1,1}*X_1) \cdots (a_{1,k_1}*X_1)\cdots (a_{s,1}*X_s) \cdots (a_{s,k_s} *X_s) \in A[\XX].\]
Then 
\[\mathfrak L(G,Z) = GZ.\]
\end{cor}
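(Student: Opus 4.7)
The plan is to imitate the proof of Proposition~\ref{propSs}(2), treating $G$ as a replacement for the monomial $X_1\cdots X_s$. Three ingredients are needed: an adequate bound on $\|G\|$, log-algebraicity for $G$, and divisibility of each $Z_k(G)$ by $X_1\cdots X_s$.

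First I would estimate the norm. Since the $*$-action is isometric on $\BB_s(\Cinf)$ and $\|X_i\|=q^{1/(q-1)}$ by Lemma~\ref{lemdense}, each factor $a_{i,j}*X_i=C_{a_{i,j}}(X_i)$ satisfies $\|a_{i,j}*X_i\|=q^{1/(q-1)}$. Submultiplicativity of $\|\cdot\|$ then gives
\[\|G\| \le q^{K/(q-1)}, \qquad \text{where } K:=k_1+\cdots+k_s\le q-1,\]
so that $\|G\|\le q$. This is the only place where the hypothesis $\sum_j k_j \le q-1$ is used; securing this bound is the main (mild) obstacle of the proof.

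Next, Theorem~\ref{logalgthm} yields $\mathfrak L(G,Z)\in A[\XX,Z]$, so every coefficient $Z_k(G)$ already lies in $A[\XX]$. Inserting the bound on $\|G\|$ into Lemma~\ref{logalglem2} gives, for every $k\ge 1$,
\[\|Z_k(G)\|\le \max_{0\le j\le k} q^{q^j\bigl(\tfrac{K}{q-1}-k\bigr)}\le 1,\]
since $\tfrac{K}{q-1}-k\le 0$. By Lemma~\ref{logalglem1} each $Z_k(G)$ with $k\ge 1$ is therefore a constant in $\F_q$.

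To promote this to $Z_k(G)=0$, I would use the structural divisibility of $G$ by $X_1\cdots X_s$ and its persistence under the $*$-action: for every $a\in A_+$,
\[a*G = \prod_{i=1}^{s}\prod_{j=1}^{k_i} C_{a\cdot a_{i,j}}(X_i),\]
and since $C_b(X_i)$ has no constant term each such product is divisible by $X_1^{k_1}\cdots X_s^{k_s}$, hence by $X_1\cdots X_s$. It follows that $X_1\cdots X_s$ divides every $L_k(G)$ and thus every $Z_k(G)$, forcing $Z_k(G)=0$ for all $k\ge 1$. Since $A_{+,0}=\{1\}$ we have $Z_0(G)=L_0(G)=G$, so finally $\mathfrak L(G,Z)=Z_0(G)\,Z=GZ$, as claimed.
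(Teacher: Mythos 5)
Your proof is correct, but it takes a genuinely different route from the paper's. The paper disposes of the corollary in two lines: it first reduces to the case $k_j=1$ for all $j$ (work with $K=\sum_j k_j\le q-1$ independent variables and specialize them afterwards to $X_1,\dots,X_s$), and then invokes the equivariance $a*_j\left(\mathfrak L(F,Z)\right)=\mathfrak L(a*_jF,Z)$ --- valid because $*_j$ commutes with $\tau$ and hence with $\exp_C$ --- to pull the Carlitz translates out of the already-established identity $\mathfrak L(X_1\cdots X_K,Z)=X_1\cdots X_KZ$ of Proposition \ref{propSs}. You instead re-run the proof of Proposition \ref{propSs}(2) with $G$ in place of $X_1\cdots X_s$: isometry of the $*$-action and submultiplicativity of $\|\cdot\|$ give $\|G\|\le q^{K/(q-1)}\le q$; Theorem \ref{logalgthm} and Lemmas \ref{logalglem2} and \ref{logalglem1} then force $Z_k(G)\in\F_q$ for $k\ge 1$; and the divisibility of every $a*G$ by $X_1\cdots X_s$ kills these constants, leaving $\mathfrak L(G,Z)=Z_0(G)Z=GZ$. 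All the steps check out (the only cosmetic caveat is that $\|a_{i,j}*X_i\|=q^{1/(q-1)}$ requires $a_{i,j}\neq 0$; if some $a_{i,j}=0$ the statement is trivial since $G=0$). Your route is longer but proves something slightly stronger, which bears on the Question posed right after the corollary: any $F\in X_1\cdots X_s\,A[\XX]$ with $\|F\|\le q$ satisfies $\mathfrak L(F,Z)=FZ$, whether or not it factors as a product of Carlitz translates. The paper's route buys economy: no new analytic estimate is needed, the corollary being a purely formal consequence of the case $F=X_1\cdots X_s$ already treated.
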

\begin{proof}
It is sufficient to consider the case where $k_j=1$ for all $1\le j \le s$ since we obtain the general case by specializing variables.
The action $*_j$ of $A$ (defined in \eqref{action*j}) satisfies for all $a\in A$, $F\in A[\XX]$
\[a*_j\left(\mathfrak L(F, Z)\right) = \mathfrak L(a*_jF,Z).\]
The corollary follows then from the relation $\mathfrak L(X_1\cdots X_s,Z) = X_1\cdots X_sZ$ since $s\le q-1$.
\end{proof}
Any $\F_q$-linear combination $F$ of polynomials of the above form still satisfies $\mathfrak L(F,Z) = FZ.$ We can ask whether there are other polynomials satisfying this relation. In fact, Proposition \ref{ConvAlg2} below assures that if  $\mathfrak L(F,Z) = FZ$, then $F \in A[\XX]$, so we can ask more generally :
\begin{qst} Describe the set of the $F\in A[\XX]$ such that $\mathfrak L(F,Z) = FZ$.
\end{qst}
\begin{lem}\label{nulSs}
Let $s\ge 1$, then $\Ss_s(\XX, 1) = 0$ if, and only if, $s\ge 2$ and $s\equiv 1 \mod q-1$.
\end{lem}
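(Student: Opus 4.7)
The approach is to test vanishing of the polynomial $\Ss_s(\XX,1)\in A[\XX]$ after substituting $X_i=\exp_C(Y_i)$, with $Y_1,\dots,Y_s$ new formal indeterminates. Since $\exp_C(Y_i)=Y_i+O(Y_i^q)$ has invertible Jacobian, this substitution embeds $A[\XX]$ into $K[[Y_1,\dots,Y_s]]$, and $\Ss_s(\XX,1)=0$ if and only if its image in $K[[\mathbf Y]]$ vanishes.

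Using $a*X_i = C_a(\exp_C Y_i)=\exp_C(aY_i)=\sum_{j\ge 0}\frac{a^{q^j}}{D_j}Y_i^{q^j}$, I would expand
\[l(X_1\cdots X_s,Z) = \sum_{\mathbf j\in \N^s} \frac{Y_1^{q^{j_1}}\cdots Y_s^{q^{j_s}}}{D_{j_1}\cdots D_{j_s}}\sum_{k\ge 0}\Big(\sum_{a\in A_{+,k}} a^{\sigma(\mathbf j)-1}\Big) Z^{q^k},\]
where $\sigma(\mathbf j)=q^{j_1}+\cdots+q^{j_s}$. At $Z=1$ the inner sum becomes the Carlitz--Goss value $\zeta_A(1-\sigma(\mathbf j)):=\sum_{a\in A_+} a^{\sigma(\mathbf j)-1}$, a finite sum in characteristic $p$. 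Using $\mathbf Y$-adic continuity of Frobenius (so that $(\sum_k L_k)^{q^i}=\sum_k L_k^{q^i}$) and the $\mathbf Y$-adic convergence of $\exp_C$ on power series with no constant term, I would specialize the identity $\mathfrak L=\exp_C\circ l$ at $Z=1$ to obtain, in $K[[\mathbf Y]]$,
\[\Ss_s(\exp_C Y_1,\dots,\exp_C Y_s,1) = \exp_C\!\left(\sum_{\mathbf j\in \N^s} \frac{Y_1^{q^{j_1}}\cdots Y_s^{q^{j_s}}}{D_{j_1}\cdots D_{j_s}}\,\zeta_A(1-\sigma(\mathbf j))\right).\]

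For the ``if'' direction ($s\ge 2$ with $s\equiv 1\pmod{q-1}$), every $\mathbf j$ satisfies $\sigma(\mathbf j)-1\ge s-1\ge 1$ and $(q-1)\mid\sigma(\mathbf j)-1$ since $q^j\equiv 1\pmod{q-1}$; Carlitz's classical vanishing of $\zeta_A$ at negative integers divisible by $q-1$ (see \cite[Chap.~8]{Goss_BasicStructures}) kills every coefficient, the argument of $\exp_C$ vanishes, and $\Ss_s(\XX,1)=0$.

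For the ``only if'' direction, if $s=1$ then $\Ss_1=X_1Z$ by Proposition \ref{propSs}, so $\Ss_1(\XX,1)=X_1\ne 0$. If $s\ge 2$ and $s\not\equiv 1\pmod{q-1}$, I would extract the coefficient of the multilinear monomial $Y_1\cdots Y_s$ on the right-hand side: only $\mathbf j=\mathbf 0$ contributes to degree $s$ in the argument of $\exp_C$, and the higher Carlitz terms $\tau^i/D_i$ with $i\ge 1$ produce only monomials of total degree $\ge sq>s$; the coefficient is therefore $\zeta_A(1-s)$, which is nonzero by Carlitz again since $(q-1)\nmid s-1\ge 1$, whence $\Ss_s(\XX,1)\ne 0$. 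The main obstacle is making rigorous the specialization at $Z=1$ of $\mathfrak L=\exp_C\circ l$ inside $K[[\mathbf Y]]$, i.e.\ verifying the $\mathbf Y$-adic convergence of both sides and the commutation of $\exp_C$ with the outer sum over $a\in A_+$ after evaluation, which is where the characteristic-$p$ continuity of Frobenius has to be invoked carefully.
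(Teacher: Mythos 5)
Your proof is correct, but it takes a genuinely different route from the paper's, essentially by globalizing the paper's one-variable Example \ref{exlogalg1}: substituting $X_i=\exp_C(Y_i)$ linearizes the $*$-action ($a*X_i=\exp_C(aY_i)$) and produces the closed formula $\Ss_s(\exp_C Y_1,\dots,\exp_C Y_s,1)=\exp_C\bigl(\sum_{\mathbf j\in\N^s} D_{j_1}^{-1}\cdots D_{j_s}^{-1}\,\zeta_A(1-\sigma(\mathbf j))\,Y_1^{q^{j_1}}\cdots Y_s^{q^{j_s}}\bigr)$, after which both implications reduce to the vanishing of $\sum_{a\in A_+}a^{n}$ for $n>0$ with $(q-1)\mid n$ and its non-vanishing when $(q-1)\nmid n$. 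The paper instead stays in the $\XX$-coordinates: it expands $C_a(X)=\sum_k\psi_k(a)X^{q^k}$, shows that each coefficient of $\Ss_s(\XX,1)$ is a $d$-independent linear combination of power sums $\sum_{a\in A_+}a^{q^d(q^{r_1}+\cdots+q^{r_s}-1)}$, which vanish by \cite[Example 8.13.9]{Goss_BasicStructures} when $s\equiv 1\bmod q-1$, $s\ge 2$, and for the converse reads off the coefficient of $X_1\cdots X_s$, namely $\sum_{a\in A_+}a^{s-1}$, checking it is $\equiv 1 \bmod \theta^q-\theta$. Your route buys a cleaner closed expression with no $\psi_k$ bookkeeping, at the price of the $Z=1$ specialization of $\exp_C\circ l$ inside $K[[Y_1,\dots,Y_s]]$, which you rightly single out; your sketch of it is sound, since for each fixed exponent $n$ the sums $\sum_{a\in A_{+,k}}a^{n}$ vanish for all large $k$ (\cite[Lemma 8.8.1]{Goss_BasicStructures}), Frobenius is additive in characteristic $p$, and every term of the argument of $\exp_C$ has total degree at least $s$, so all rearrangements converge coefficientwise --- exactly the same finiteness input the paper uses when it sets $Z=1$. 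The one spot to tighten is the final non-vanishing of $\zeta_A(1-s)=\sum_{a\in A_+}a^{s-1}$ for $s\ge 2$, $(q-1)\nmid s-1$: rather than a bare appeal to Carlitz, either cite a precise statement or reproduce the one-line argument the paper gestures at, namely reduction modulo the degree-one primes dividing $\theta^q-\theta$, which shows the sum is $\equiv 1$ there because the degree-$0$ term contributes $1$, the degree-$1$ term contributes $\sum_{x\in\F_q}x^{s-1}=0$, and the terms of degree $d\ge 2$ contribute multiples of $q^{d-1}=0$.
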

\begin{proof}
First, suppose $s\ge 2$ and $s\equiv 1 \mod q-1$.

Let $a\in A$. Recall (see \cite[Section 3.4]{Anderson-Thakur_TensorPowers}) that from the relation $C_a(X) = \exp_C(a\log_C(X))$, we deduce that we can write 
\[C_a(X) = \sum_{k=0}^{\deg_\theta a} \psi_k(a) X^{q^k}\]
where $\psi_k(x) \in A[x]$ is an $\F_q$-linear polynomial of degree $q^k$, which vanishes exactly at the polynomials $x\in A$ of degree less than $k$.
Thus
\[ a* (X_1\cdots X_s) = \sum_{k_1, \dots, k_s \ge 0} \psi_{k_1}(a)\cdots \psi_{k_s}(a) X_1^{q^{k_1}}\cdots X_s^{q^{k_s}}\]
where the right hand side is a finite sum.

We deduce that 
\[\Ss_s(\XX, Z) = \sum_{n\ge 0} Z^{q^n} \sum_{d=0}^n D_{n-d}^{-1}\sum_{k_1, \dots, k_s \ge 0} \sum_{a\in A_{+,d}} \left( \frac{ \psi_{k_1}(a)\cdots\psi_{k_s}(a)}{a} \right)^{q^{n-d}}X_1^{q^{k_1+n-d}}\cdots X_s^{q^{k_s+n-d}}\]
and by Theorem \ref{logalgthm}, this is a polynomial. Remark now that $\sum_{a\in A_{+,d}} \frac{ \psi_{k_1}(a)\cdots\psi_{k_s}(a)}{a}$ is a linear combination (with coefficients depending only on $k_1, \dots, k_s, r_1, \dots, r_s$ and independent on $d$) of sums of the form $\sum_{a\in A_{+,d}} a^{q^{r_1}+\cdots+q^{r_s}-1}$ with, for all $1\le j \le s$, $0\le r_j\le k_j$. According to  \cite[Lemma 8.8.1]{Goss_BasicStructures}, this sum vanishes for $d>\frac {q^{r_1}+\cdots+q^{r_s}-1}{q-1}$. Thus the coefficient of $X_1^{q^{m_1}}\cdots X_s^{q^{m_s}}$ in $\Ss_s(\XX,1)$ is  a linear combination of (finite) sums of the form
$\sum_{a\in A_{+}} a^{q^d(q^{r_1}+\cdots+q^{r_s}-1)}$. But since $s\equiv 1 \mod q-1$, $q^d(q^{r_1}+\cdots+q^{r_s}-1)\equiv 0 \mod q-1$, and since $s \ge 2$, $q^d(q^{r_1}+\cdots+q^{r_s}-1)\neq 0$. Thus, by \cite[Example 8.13.9]{Goss_BasicStructures},  all the sums $\sum_{a\in A_{+}} a^{q^d(q^{r_1}+\cdots+q^{r_s}-1)}$ vanish, that is $\Ss_s(\XX, 1) = 0$.

Conversely,  the coefficient of $X_1\cdots X_s$ in $\Ss_s(\XX, 1)$ is $\sum_{a\in A_+} a^{s-1}$ which is congruent to $1$ modulo $\theta^q-\theta$ if $s=1$ or $s\not\equiv 1 \mod q-1$, so $\Ss_s(\XX, 1)$ does not vanish.
\end{proof}

\begin{exe}\label{exSs}
We already know that $\Ss_s(\XX,Z) = X_1\cdots X_s Z$ if $1\le s\le q-1$. Using Proposition \ref{propSs} and Lemma \ref{nulSs}, we easily see that 
\[\Ss_q(\XX,Z) = X_1\cdots X_q Z - X_1\cdots X_q Z^q.\]
For $q\ge 3$, a computation leads to
\[\Ss_{q+1}(\XX,Z) =  X_1\cdots X_{q+1} Z - X_1\cdots X_{q+1} (X_1^{q-1}+\cdots + X_{q+1}^{q-1})Z^q.\]
\end{exe}

\begin{lem}\label{Simon} 
Let $s\ge 1$,
\begin{enumerate}
\item for all integer $k\ge\frac s{q-1}$, the sum $\sum_{a\in A_{+,k}} a(t_1)\cdots a(t_{s-1})$ vanishes, so that $L(0,s-1) = \sum_{k \ge 0}\sum_{a\in A_{+,k}} a(t_1)\cdots a(t_{s-1})\in \F_q[\ttt]$,
\item $\Ss_s(\XX, 1) \equiv \left(L(0,s-1).(X_1\cdots X_{s-1})\right)X_s \mod X_s^q$.
\end{enumerate}
\end{lem}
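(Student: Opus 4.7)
The plan is to handle the two parts separately, with (1) by an elementary $\F_q$-power-sum count, and (2) by reducing modulo $X_s^q$ so that $\exp_C$ and the $\log$-sum collapse to their linear parts.

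For (1), I would parametrise $a \in A_{+,k}$ by its non-leading coefficients: write $a = \theta^k + c_{k-1}\theta^{k-1} + \cdots + c_0$ with $(c_0,\dots,c_{k-1}) \in \F_q^k$, so that $a(t_j) = t_j^k + c_{k-1}t_j^{k-1}+\cdots + c_0$. Expanding the product over $j = 1,\dots,s-1$ and summing over the $c_i$'s gives
\[
\sum_{a\in A_{+,k}} a(t_1)\cdots a(t_{s-1}) \;=\; \sum_{\vec e \in \{0,\dots,k\}^{s-1}} \ttt^{\vec e}\, \prod_{i=0}^{k-1}\Bigl(\sum_{c\in\F_q} c^{m_i(\vec e)}\Bigr),
\]
where $m_i(\vec e) = \#\{j : e_j = i\}$. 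Since $\sum_{c\in\F_q} c^m$ vanishes unless $m>0$ and $(q-1)\mid m$, a surviving term forces $m_i\ge q-1$ for every $i=0,\dots,k-1$, hence $k(q-1) \le \sum_{i=0}^{k-1} m_i \le s-1$. When $k\ge \frac{s}{q-1}$ (i.e.\ $k(q-1)\ge s>s-1$) this is impossible, so the sum vanishes and $L(0,s-1)$ is a polynomial in $\F_q[\ttt]$.

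For (2), I would exploit two observations. First, since $C_a(X_s) = aX_s + \psi_1(a)X_s^q + \cdots$, one has $C_a(X_s)\equiv aX_s \pmod{X_s^q}$, so
\[
\frac{a*(X_1\cdots X_s)}{a} \;=\; \frac{C_a(X_1)\cdots C_a(X_{s-1})\,C_a(X_s)}{a} \;\equiv\; \bigl(a*(X_1\cdots X_{s-1})\bigr)\,X_s \pmod{X_s^q}.
\]
Second, $l(X_1\cdots X_s, Z)$ is $\F_q$-linear in $X_s$, so every $l(X_1\cdots X_s,Z)^{q^j}$ with $j\ge 1$ is divisible by $X_s^q$; applying $\exp_C$ only the degree-one term survives modulo $X_s^q$, giving
\[
\Ss_s(\XX,1) \;\equiv\; \sum_{k\ge 0} L_k(X_1\cdots X_s)\big|_{Z=1} \;\equiv\; \sum_{k\ge 0}\bigl(\sum_{a\in A_{+,k}} a*(X_1\cdots X_{s-1})\bigr) X_s \pmod{X_s^q}.
\]
By part (1) the inner sums vanish for $k\ge \frac{s}{q-1}$, so this is a finite sum and the evaluation at $Z=1$ is legitimate. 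Finally, the identity $a(t_j).X_j = C_a(X_j)$ (immediate from $t_j.X_j = C_\theta(X_j)$ together with $\F_q$-linearity) gives $\bigl(a(t_1)\cdots a(t_{s-1})\bigr).(X_1\cdots X_{s-1}) = a*(X_1\cdots X_{s-1})$, and summing over $a\in A_+$ produces $L(0,s-1).(X_1\cdots X_{s-1})\,X_s$, as desired.

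The only genuine subtlety I anticipate is the bookkeeping between the formal series $l(X_1\cdots X_s, Z)$ and its value at $Z=1$: this series does not converge to a polynomial a priori, but part (1) provides precisely the truncation needed to make sense of it modulo $X_s^q$, so the passage through $\exp_C$ and the exchange of summations become routine.
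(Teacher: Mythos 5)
Your proof is correct, but your route to part (1) is genuinely different from the paper's. The paper deduces part (1) from the log-algebraicity machinery: it observes that $Z_k(X_1\cdots X_s)\equiv L_k(X_1\cdots X_s)\equiv X_s\sum_{a\in A_{+,k}}a*(X_1\cdots X_{s-1})\pmod{X_s^q}$, invokes Proposition \ref{propSs} (itself resting on Theorem \ref{logalgthm}) to get $Z_k(X_1\cdots X_s)=0$ for $k\ge\frac s{q-1}$, and then uses the injectivity in Lemma \ref{isomElin} to conclude that $\sum_{a\in A_{+,k}}a(t_1)\cdots a(t_{s-1})$ vanishes; part (2) then follows by summing and evaluating at $Z=1$. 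You instead prove part (1) by a direct power-sum computation over $\F_q$ (essentially re-proving the relevant case of \cite[Lemma 8.8.1]{Goss_BasicStructures}, which the paper explicitly mentions as an alternative source for this point), and then feed it back into part (2) to truncate the series and legitimise the evaluation at $Z=1$. So the logical direction is reversed: the paper extracts the vanishing from the special polynomials themselves, in line with its theme that such data is encoded in $\Ss_s$, and needs Proposition \ref{propSs} plus the injectivity of $f\mapsto f.(X_1\cdots X_{s-1})$, whereas your argument is more elementary and self-contained, using only that $\Ss_s$ is a polynomial (Theorem \ref{logalgthm}, via the definition of $\Ss_s$) so that $\Ss_s(\XX,1)$ makes sense. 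Your part (2) — reduction modulo $X_s^q$, $C_a(X_s)\equiv aX_s$, collapse of $\exp_C$ to its identity term, and the identification $\bigl(a(t_1)\cdots a(t_{s-1})\bigr).(X_1\cdots X_{s-1})=a*(X_1\cdots X_{s-1})$ — coincides with the paper's computation.
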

\begin{proof}
For all $k\ge 0$,
$L_{k}(X_1\cdots X_s) = \sum_{a\in A_{+,k}} \frac{a*(X_1\cdots X_s)}a$ can be viewed as a polynomial in $X_s$, with no constant term, and since $C_a(X_s)\equiv a X_s \mod X_s^q$, we have :
\begin{eqnarray*}
Z_k(X_1\cdots X_s) &\equiv& L_{k}(X_1\cdots X_s) \equiv \sum_{a\in A_{+,k}} \frac{a*(X_1\cdots X_{s})}a    \mod X_s^q\\
&\equiv & \sum_{a\in A_{+,k}} a*(X_1\cdots X_{s-1}) \frac {aX_s}a \equiv X_s\sum_{a\in A_{+,k}} a*(X_1\cdots X_{s-1}) \mod X_s^q.
\end{eqnarray*}
But Proposition \ref{propSs} tells that $Z_k(X_1\cdots X_s) =0$ if $k\ge \frac s {q-1}$.
Remark now  that 
\[\sum_{a\in A_{+,k}} a*(X_1\cdots X_{s-1}) = \sum_{a\in A_{+,k}} a(t_1)\cdots a(t_{s-1}) . (X_1\cdots X_{s-1}).\]
We deduce then the first point from Lemma \ref{isomElin} and the evaluation at $Z=1$ :
\[L(0,s-1).(X_1\cdots X_{s-1})X_s\equiv\sum_{k\ge 0} Z_k(X_1\cdots X_s)\equiv \Ss_s(X,1) \mod X_s^q\]
gives the second point.
\end{proof}

Note that the first point of the above lemma is also a consequence of \cite[Lemma 8.8.1]{Goss_BasicStructures}  (see also \cite[Lemma 30]{AnglesPellarin_UniversalGT}).

\begin{lem}\label{Ssrad}
Let $s\ge 1$, if there exists $b,c\in A$ and $r\in X_s\Cinf[\XX]$ such that
\[ C_b(r) = C_{c} (\Ss_s(\XX, 1)) \]
then $b$ divides $c$ in $A$ and $r=C_{\frac cb}(\Ss_s(\XX, 1))$.
\end{lem}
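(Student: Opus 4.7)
Let $S = \Ss_s(\XX, 1)$; I assume $S \ne 0$ (the case $S = 0$ forces $r = 0$ once $b \ne 0$ is assumed, and is essentially trivial). The plan is to establish both assertions by reducing $C_b(r) = C_c(S)$ modulo a prime $P$ of $A$. First, I reduce to the case $\gcd(b,c) = 1$: writing $d = \gcd(b,c)$, $b = db'$, $c = dc'$, the operator $C_d$ is injective on $\Cinf[\XX]$ (its leading $\tau$-coefficient equals $1$), so the equation becomes $C_{b'}(r) = C_{c'}(S)$. It is then enough to show $b' \in \F_q^*$, since then $b \mid c$ with $c/b = c'/b' \in A$, and $r = C_{c/b}(S)$ follows from injectivity of $C_b$.

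Suppose for contradiction that some prime $P \in A$ of degree $d_P := \deg_\theta P \ge 1$ divides $b$; write $b = P^k b_1$ with $k = v_P(b) \ge 1$ and $P \nmid b_1$, so $P \nmid c$ by coprimality. A coefficient-by-coefficient recursion on $C_b(r) = C_c(S)$ (for $\mathbf{J} \in \N^s$ with at least one coordinate not divisible by $q$, only the $i = 0$ term of $C_b(r) = \sum_i [b]_i r^{q^i}$ contributes to the $\XX^\mathbf{J}$-coefficient, giving $b r_\mathbf{J} = c s_\mathbf{J} \in A$; the remaining coefficients follow by induction on the total degree $|\mathbf{J}|$) shows $r \in K[\XX]$. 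A $P$-adic (Gauss-lemma) argument then gives $r \in A_{(P)}[\XX]$: if $v_P(r) = -m < 0$, then the term $r^{q^{\deg b}}$ in $C_b(r)$ has $P$-adic valuation $-m q^{\deg b}$, strictly more negative than every other term $[b]_i r^{q^i}$ (which has valuation $\ge -m q^i$), contradicting $C_b(r) = C_c(S) \in A[\XX]$. Reducing modulo $P$ in $\kappa_P[\XX]$ (with $\kappa_P = A/P$) and using $C_P \equiv \tau^{d_P} \pmod{PA[\tau]}$ recalled in the proof of Lemma \ref{logalglem3}, the equation yields
\[
C_{\bar c}(\bar S) = \bigl(C_{\overline{b_1}}(\bar r)\bigr)^{q^{k d_P}} \qquad \text{in } \kappa_P[\XX],
\]
so the left-hand side must be a $q^{k d_P}$-th power.

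The main obstacle is to show that $C_{\bar c}(\bar S)$ is not even a $q$-th power, which will contradict the equation above since $k d_P \ge 1$. By the second part of Lemma \ref{Simon}, $S \equiv \phi \cdot X_s \pmod{X_s^q}$ where $\phi := L(0, s-1) \cdot (X_1 \cdots X_{s-1}) \in A[X_1, \ldots, X_{s-1}]$, and $L(0, s-1) \in \F_q[\ttt]$ is nonzero since its specialization $L(0, s-1)|_{\ttt \to \theta} = \sum_{a \in A_+} a^{s-1}$ is the coefficient of $X_1 \cdots X_s$ in $S$, shown nonzero in the proof of Lemma \ref{nulSs}. Letting $\mathbf{n}^* = (n_1, \ldots, n_{s-1})$ be the lex-maximal index in the support of $L(0, s-1)$, with coefficient $c_{\mathbf{n}^*} \in \F_q^*$, the vanishing of $\psi_k(\theta^n)$ for $n < k$ (recalled in the proof of Lemma \ref{nulSs}) implies that the coefficient of $X_1^{q^{n_1}} \cdots X_{s-1}^{q^{n_{s-1}}}$ in $\phi$ is exactly $c_{\mathbf{n}^*} \in \F_q^*$; hence $\bar\phi \ne 0$ in $\kappa_P[X_1, \ldots, X_{s-1}]$ and $\bar S$ has a nonzero $X_s$-linear part. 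In the expansion $C_{\bar c}(\bar S) = \bar c \bar S + [\bar c]_1 \bar S^q + \cdots$, monomials of $X_s$-exponent $1$ arise only from $\bar c \bar S$ and are nonzero (as $\bar c \in \kappa_P^*$); since a $q$-th power in $\kappa_P[\XX]$ has every $X_s$-exponent divisible by $q$, this is the desired contradiction, forcing $b \in \F_q^*$ and completing the proof.
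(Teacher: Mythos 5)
Your proof is correct, but it takes a genuinely different route from the paper's. The paper argues globally in three short steps: from $C_b(r)=C_c(\Ss_s(\XX,1))$ it applies the formal $\log_C$/$\exp_C$ bijections of $X_sK[X_1,\dots,X_{s-1}][[X_s]]$ to get $b\log_C(r)=c\log_C(\Ss_s(\XX,1))$, hence $r=\exp_C\bigl(\tfrac cb\log_C(\Ss_s(\XX,1))\bigr)\in X_sK[\XX]$; then, since $C_b(T)-C_c(\Ss_s(\XX,1))$ is monic in $T$ up to a unit and $A[\XX]$ is integrally closed, $r\in A[\XX]$; finally it compares $X_s$-linear parts via Lemma \ref{Simon}, obtaining $br_1=c\,L(0,s-1).(X_1\cdots X_{s-1})$ and concluding $\tfrac cb\in A$ from Lemma \ref{isomAlin} and $L(0,s-1)\in\F_q[t_1,\dots,t_{s-1}]\setminus\{0\}$. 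You instead work prime by prime: after reducing to $\gcd(b,c)=1$, you get $r\in K[\XX]$ by coefficient recursion and $r\in A_{(P)}[\XX]$ by a Gauss-lemma estimate, then reduce modulo $P$ and use $C_P\equiv\tau^{d_P}$ to turn the equation into a $q$-th-power constraint, contradicted by the nonvanishing of the $X_s$-linear part of $\Ss_s(\XX,1)$ modulo $P$ (again via Lemma \ref{Simon} and the $\F_q$-rationality of $L(0,s-1)$). Both proofs hinge on the same key input, but the paper extracts $b\mid c$ in one line from the linear part, whereas you obtain it locally at every prime; your argument is longer but avoids the power-series inversion and the appeal to integral closedness. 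Two small points: $C_d$ is injective on $X_s\Cinf[\XX]$ (which is what you actually use) but not on all of $\Cinf[\XX]$, since the constants in the $d$-torsion of $C$ lie in its kernel; and when $\Ss_s(\XX,1)=0$ the conclusion $b\mid c$ genuinely fails (take $b=\theta$, $c=1$, $r=0$), so the lemma must be read with the implicit hypothesis $\Ss_s(\XX,1)\neq0$ --- a caveat the paper's own proof shares.
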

\begin{proof}
We first prove that $r$ has coefficients in $A$. We will use the fact that $\exp_C$ and $\log_C$ define reciprocal bijections of $X_sK[X_1, \dots, X_{s-1}][[X_s]]$ satisfying for all $F\in X_sK[X_1, \dots, X_{s-1}][[X_s]]$ and $a\in A$, $\log_C (C_a(F)) = a\log_C(F)$ and $\exp_C (aF) = C_a(\exp_C(F))$. Thus $C_b(r) =\exp_C(b\log_C(r))$ and $C_{c} (\Ss_s(\XX, 1)) = \exp_C(c\log_C(\Ss_s(\XX, 1)))$. We deduce that $r= \exp_C(\frac cb \Ss_s(\XX, 1)) \in X_s K[\XX]$. But $C_b(X)$ is monic up to a unit in $\F_q^*$, and $A[\XX]$ is integrally closed. Thus the fact that $C_b(r) \in A[\XX]$ implies that $r\in A[\XX]$.

Write now $r \equiv X_s r_1\mod X_s^2$ with $r_1\in A[X_1, \dots, X_{s-1}]$. Then 
$C_b(r) \equiv X_s br_1 \mod X_s^2$ and by Lemma \ref{Simon}, 
\[C_{c} (\Ss_s(\XX, 1)) \equiv cX_s \left(L(0,s-1).(X_1\cdots X_{s-1})\right) \mod X_s^2,\] 
thus $r_1 = \frac cb L(0,s-1).(X_1\cdots X_{s-1})$. Since $r_1\in A[X_1, \dots, X_{s-1}]$, Lemma \ref{isomAlin} assures that $\frac cb L(0,s-1) \in A[t_1, \dots, t_{s-1}]$. But $L(0, s-1)\in \F_q[t_1, \dots, t_{s-1}]$. We obtain that $b$ divides $c$ in $A$ and that $r = \exp_C(\frac cb \Ss_s(\XX, 1)) = C_{\frac cb}(\Ss_s(\XX, 1))$.
\end{proof}

Set $\mathfrak R = \cup_{s\ge 1}\Cinf[X_1, \dots, X_s]$ and let $\mathfrak F$ be the sub-$A$-module of $C(\mathfrak R)$ generated by the polynomials $\Ss_s(X_1, \dots, X_s, 1), s\ge 1$. Set
\[\sqrt {\mathfrak F} = \left\{ r\in \mathfrak R, \exists a \in A^*, C_a(r) \in \mathfrak F\right\}.\]
\begin{thm}
\[\sqrt {\mathfrak F} = \mathfrak F + C(\Lambda_C).\]
\end{thm}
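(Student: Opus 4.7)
Proof plan. The inclusion $\mathfrak F + C(\Lambda_C) \subseteq \sqrt{\mathfrak F}$ is immediate: for $r = f + \lambda$ with $f \in \mathfrak F$ and $\lambda \in \Lambda_C$, choose $a \in A^*$ annihilating $\lambda$; then $C_a(r) = C_a(f) \in \mathfrak F$ since $\mathfrak F$ is an $A$-submodule of $C(\mathfrak R)$. For the converse, I proceed by induction on the largest index $M$ such that $X_M$ appears effectively in $r$. The base case is $r \in \Cinf$: every element of $\mathfrak F$ has zero constant term (each $\Ss_{s'}$ is divisible by $X_1 \cdots X_{s'}$ by Proposition \ref{propSs}), so $C_a(r) \in \mathfrak F \cap \Cinf = \{0\}$ forces $r \in \Lambda_C$.

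For the inductive step, fix a representation $C_a(r) = \sum_{s' \ge 1} C_{c_{s'}}(\Ss_{s'}(X_1, \dots, X_{s'}, 1))$ with no zero summand, and let $T$ be its maximum index. The first claim is that $T = M$: one has $M \le T$ because $C_a(r)$ involves exactly the variables appearing in $r$ (Frobenius does not remove variables), while $T \le M$ because, by Proposition \ref{propSs}, $\Ss_T$ has positive $X_T$-degree, hence so does $C_{c_T}(\Ss_T(X_1, \dots, X_T, 1))$ (its $X_T$-degree being multiplied by $q^{\deg c_T}$), so the right-hand side genuinely depends on $X_T$. Setting $s = T = M$, I would decompose $r = r_0 + r_1$ with $r_0 = r|_{X_s = 0} \in \Cinf[X_1, \dots, X_{s-1}]$ and $r_1 \in X_s \Cinf[X_1, \dots, X_s]$. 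Since $\Ss_{s'}$ for $s' < s$ does not involve $X_s$ while $\Ss_s$ is divisible by $X_s$, comparing the $X_s$-free and $X_s$-divisible parts of the equation yields
\[C_a(r_0) = \sum_{s' < s} C_{c_{s'}}(\Ss_{s'}(X_1, \dots, X_{s'}, 1)) \in \mathfrak F\]
and the crucial single-term equation $C_a(r_1) = C_{c_s}(\Ss_s(X_1, \dots, X_s, 1))$. Lemma \ref{Ssrad} applies directly to the latter and produces $a \mid c_s$ in $A$ and $r_1 = C_{c_s/a}(\Ss_s(X_1, \dots, X_s, 1)) \in \mathfrak F$. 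Since $r_0$ uses strictly fewer variables, the induction hypothesis gives $r_0 \in \mathfrak F + C(\Lambda_C)$, whence $r = r_0 + r_1 \in \mathfrak F + C(\Lambda_C)$.

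The only delicate point is the matching $T = M$: it is precisely what isolates a single $\Ss_s$ term on the right-hand side, thereby putting the equation $C_a(r_1) = C_{c_s}(\Ss_s)$ in the form required by Lemma \ref{Ssrad}. Everything else reduces to a straightforward separation of polynomials by $X_s$-divisibility and the invocation of the induction hypothesis.
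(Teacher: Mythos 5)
Your argument is correct and follows essentially the same route as the paper: induction on the number of variables effectively present, separation of the $X_s$-free part (the paper evaluates at $X_n=0$, you compare $X_s$-divisibility, which is the same thing), the induction hypothesis applied to that part, and Lemma \ref{Ssrad} applied to the remaining single-term equation $C_a(r_1)=C_{c_s}(\Ss_s(\XX,1))$. Your explicit check that the top index of the representation matches the top variable of $r$ (via the dominant Frobenius term of $C_a$) just makes precise a reduction the paper leaves implicit when it assumes $a_n\neq 0$ and $\Ss_n(\XX,1)\neq 0$.
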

\begin{proof}
The inclusion $\mathfrak F + C(\Lambda_C)\subset \sqrt {\mathfrak F}$ is clear.

Let $r\in \sqrt {\mathfrak F}$. Then there exists $n\ge 1$ such that $r\in \Cinf[X_1, \dots, X_n]$ and there exist $a\in A^*$, $a_1, \dots, a_n \in A$ such that 
\begin{equation}\label{eqthmrad}
C_a(r) = \sum_{m=1}^n C_{a_m} (\Ss_m(X_1, \dots, X_m,1)).
\end{equation}
We now prove by induction on $n\ge 1$ that $r\in \mathfrak F + C(\Lambda_C)$.

In the case $n=1$, Equation \eqref{eqthmrad} reduces to 
\[ C_a(r)  = C_c(\Ss_1(X_1,1)) \]
with $c\in A$ and $r\in \Cinf(X_1)$. The constant term of $r$ is then in $C(\Lambda_C)$ and we can therefore assume $r\in X_1\Cinf[X_1]$. The result in this case is then just the one of Lemma \ref{Ssrad}.

We suppose now $n>1$ and that the result is proved for all $k\le n-1$. We can assume that $a_n\neq 0$ and $\Ss_n(X_1, \dots, X_n,1)\neq 0$, that is : $n \not \equiv 1 \mod q-1$. Write $r = \sum_{i=0}^d r_i(X_1, \dots, X_{n-1} )X_n^i$, with $d>0$. Then Equation  \eqref{eqthmrad} evaluated at $X_n=0$ yields :
\[C_a(r_0(X_1, \dots, X_{n-1} ))  =  \sum_{m=1}^{n-1} C_{a_m} \Ss_m(X_1, \dots, X_m,1)\]
and the induction hypothesis assures that $r_0(X_1, \dots, X_{n-1} ) \in \mathfrak F + C(\Lambda_C)$. Thus we can assume $r_0=0$ and, for some $c\in A$,
\[ C_a(r) = C_{c} (\Ss_n(X_1, \dots, X_n,1)) \]
Again, we are reduced to the result proved in Lemma \ref{Ssrad}.
\end{proof}

\subsection{Converses of the $\log$-algebraicity Theorem}
The $\log$-algebraicity theorem asserts that if $F\in A[\XX]$, then $\mathfrak L(F,Z) \in A[\XX, Z]$. We will prove in this section conversely that, if $F\in \Cinf[\XX]$ and $\mathfrak L(F,Z)$ belongs to $\Cinf[\XX,Z]$ or to $\overline{A} [\XX][[Z]]\otimes_A K$, then necessarily, $F\in A[\XX]$.

Let $\overline{A}$ be the integral closure of $A$ in $\overline{K}$. If $P$ is a prime of $A,$ $\overline{A}_{(P)}$ denotes the ring of elements of $\overline{K}$ that are $P$-integral.
\begin{lem}\label{convlem1}
Let $x\in A$ such that for infinitely many primes $P$
\[x^{q^d}\equiv x\pmod{P^p},\]
where $d$ is the degree of $P.$ Then $x \in A^p.$
\end{lem}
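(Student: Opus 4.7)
The plan is to represent $x$ as $g(\theta)$ with $g \in \F_q[T]$, use a first-order Taylor expansion to extract from the hypothesis the divisibility $P \mid g'(\theta)$ for each of the (infinitely many) primes $P$ at hand, and then invoke finiteness of the set of prime divisors of a nonzero element of $A$ to conclude that $g'(\theta) = 0$. Standard facts about perfect fields of characteristic $p$ then yield $x \in A^p$.

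Since the coefficients of $g$ lie in $\F_q$, one has $x^{q^d} = g(\theta)^{q^d} = g(\theta^{q^d})$, so the hypothesis reads $v_P\bigl(g(\theta^{q^d}) - g(\theta)\bigr) \geq p$ for infinitely many primes $P$, where $d = \deg P$. Setting $u := \theta^{q^d} - \theta$ and expanding $g(T+U) - g(T)$ as a polynomial in $U$ over $\F_q[T]$ yields
\[ g(\theta^{q^d}) - g(\theta) = g'(\theta)\, u + u^2\, h_P \qquad \text{for some } h_P \in A. \]
The classical squarefree factorization $\theta^{q^d} - \theta = \prod_{e\mid d}\prod_{\deg Q = e,\, Q \text{ monic irred.}} Q$ in $A$ shows that $v_P(u) = 1$ for every monic irreducible $P$ of degree exactly $d$.

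Consequently $v_P(u^2 h_P) \geq 2$, and the hypothesis $v_P\bigl(g'(\theta) u + u^2 h_P\bigr) \geq p \geq 2$ is incompatible with $v_P(g'(\theta)) = 0$: in that case $v_P(g'(\theta) u) = 1$ would be strictly smaller than $v_P(u^2 h_P)$, so the sum would have valuation exactly $1$, contradicting the lower bound $p$. Hence $P \mid g'(\theta)$ for each such $P$.

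Since this holds for infinitely many primes $P$ and every nonzero element of the UFD $A$ has only finitely many prime divisors, we conclude $g'(\theta) = 0$, so $g' = 0$ in $\F_q[T]$, i.e.\ $g(T) \in \F_q[T^p]$. Because $\F_q$ is perfect, every coefficient of $g$ is a $p$-th power in $\F_q$, hence $g = \phi^p$ for some $\phi \in \F_q[T]$, and therefore $x = \phi(\theta)^p \in A^p$. The only delicate step is the valuation bookkeeping in the middle paragraph, but it reduces to the known squarefreeness of $\theta^{q^d} - \theta$; everything else is formal.
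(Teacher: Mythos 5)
Your proof is correct and follows essentially the same route as the paper's: a Taylor expansion $x^{q^d}-x = (\theta^{q^d}-\theta)x' + (\theta^{q^d}-\theta)^2(\cdots)$ combined with the squarefreeness of $\theta^{q^d}-\theta$ forces $v_P(x^{q^d}-x)=1$ whenever $P\nmid x'$, so the hypothesis at infinitely many primes gives $x'=0$. The only difference is presentational: the paper argues by contraposition and leaves the final step ($x'=0$ plus perfectness of $\F_q$ implies $x\in A^p$) implicit, which you spell out.
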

\begin{proof} Let $F\in A\setminus A^p,$ then $F' \not = 0,$ where $F'$ denotes the derivative of $F$ with respect to the variable $\theta.$ Then $F^{q^{d}}-F \equiv (\theta^{q^d}-\theta)F' \mod P^2$, so that for all primes $P$ not dividing $F'$, $v_P(F^{q^{d}}-F) =1.$
\end{proof}
\begin{lem}\label{convlem2}
\begin{enumerate}
\item  Let $\alpha \in \overline{A}$ such that for all but finitely many primes $P$ of $A$
\[\alpha ^{q^d} \equiv \alpha \pmod{P\overline{A}},\]
where $d$ is the degree of $P.$ Then $\alpha \in A.$
\item Let $\alpha \in \overline{K}$ such that for all but finitely many primes $P$ of $A$
\[\alpha ^{q^d} \equiv \alpha \pmod{P\overline{A}_{(P)}},\]
where $d$ is the degree of $P.$ Then $\alpha \in K.$
\end{enumerate}
\end{lem}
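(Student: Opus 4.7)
The plan is to reduce part (2) to part (1), and to prove part (1) by first reducing to the case where $\alpha$ is separable over $K$ and handling that case via the function field Chebotarev density theorem, then performing an inseparable descent that invokes Lemma \ref{convlem1}.

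For the separable case of part (1), I observe that $\overline K$ is the perfect closure of $K^{\mathrm{sep}}$, so there exists $e\ge 0$ with $\alpha^{p^e}$ separable over $K$; the Frobenius identity $(\alpha^{p^e})^{q^d}-\alpha^{p^e}=(\alpha^{q^d}-\alpha)^{p^e}\in P^{p^e}\overline A\subseteq P\overline A$ shows the hypothesis descends to $\alpha^{p^e}$, so one may assume $\alpha$ separable. Let $L$ be the Galois closure of $K(\alpha)/K$ and $G=\mathrm{Gal}(L/K)$. For any $P$ outside the finite set of primes ramified in $L/K$, dividing the discriminant of the minimal polynomial of $\alpha$, or among the exceptions of the hypothesis, and for any prime $\mathfrak{P}$ of $L$ above $P$, one has $\Frob_{\mathfrak{P}}(\alpha)\equiv\alpha^{q^d}\equiv\alpha\pmod{\mathfrak{P}}$; since distinct conjugates of $\alpha$ remain distinct modulo $\mathfrak{P}$, this forces $\Frob_{\mathfrak{P}}(\alpha)=\alpha$. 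The function field Chebotarev density theorem, applied by varying the degree of $P$ so as to cover every coset modulo the image in $\mathrm{Gal}(L\cap\overline{\F_q}/\F_q)$, produces primes whose Frobenius realises every element of $G$; so every element of $G$ fixes $\alpha$, whence $\alpha\in L^G=K\cap\overline A=A$.

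Now the inseparable descent. Once $\alpha^{p^e}\in A$, I induct on $e\ge 0$: the case $e=0$ is trivial, and for $e\ge 1$ set $x=\alpha^{p^e}\in A$, so that $x^{q^d}-x=(\alpha^{q^d}-\alpha)^{p^e}\in P^{p^e}\overline A\cap A=P^{p^e}A\subseteq P^p A$ for all but finitely many $P$. Lemma \ref{convlem1} therefore gives $x\in A^p$, hence $\alpha^{p^e}=\beta^p$ for some $\beta\in A$; uniqueness of $p$-th roots in characteristic $p$ gives $\alpha^{p^{e-1}}=\beta\in A$, and the inductive hypothesis applied at this smaller exponent concludes $\alpha\in A$.

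For part (2), pick $c\in A^*$ a common denominator of the coefficients of the minimal polynomial of $\alpha$, so that $c\alpha\in\overline A$. For any prime $P$ not dividing $c$ and outside the exceptional set,
\begin{equation*}
(c\alpha)^{q^d}-c\alpha=c^{q^d}(\alpha^{q^d}-\alpha)+(c^{q^d}-c)\alpha.
\end{equation*}
The first summand lies in $P\overline A_{(P)}$ by hypothesis; the second also lies there because $c^{q^d}\equiv c\pmod P$ (as $c\bmod P\in A/P=\F_{q^d}$) and $\alpha\in\overline A_{(P)}$ (since $c\alpha\in\overline A$ with $c$ a $P$-unit). Thus $(c\alpha)^{q^d}-c\alpha\in\overline A\cap P\overline A_{(P)}$. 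A local-global check, using $\overline A=\bigcap_Q\overline A_{(Q)}$ and that $1/P\in A_{(Q)}^{\times}$ for $Q\neq P$, shows $\overline A\cap P\overline A_{(P)}=P\overline A$, so $c\alpha$ satisfies the hypothesis of (1); hence $c\alpha\in A$ and $\alpha\in K$. The main obstacle I anticipate is the positive-characteristic Chebotarev step, where one must confirm that every element of $G$ arises as a Frobenius even when $L\cap\overline{\F_q}\supsetneq\F_q$ restricts the Frobenius classes at primes of fixed degree.
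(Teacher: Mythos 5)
Your proposal is correct and follows essentially the same route as the paper: the separable case via the function-field Chebotarev density theorem, the inseparable descent through Lemma \ref{convlem1} (your explicit induction on $e$ is the paper's minimality-of-$m$ argument), and part (2) by multiplying by a denominator and invoking part (1), where you usefully make explicit the transfer of the hypothesis to $c\alpha$ and the identity $\overline{A}\cap P\overline{A}_{(P)}=P\overline{A}$ that the paper leaves implicit. The constant-field subtlety you flag is genuine but standard, and your fix (choosing degrees $d$ in the appropriate residue class modulo $[L\cap\overline{\F_q}:\F_q]$) is exactly what is needed; the paper avoids it by showing instead that all but finitely many primes split completely in $K(\alpha)$ and applying the Dirichlet-density form of Chebotarev.
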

\begin{proof}
\begin{enumerate}
\item First we assume that $\alpha $ is separable over $K.$ Set $F=K(\alpha)$ and let $O_F$ be the integral closure of $A$ in $F.$  For a prime $P$ not dividing the discriminant of $A[\alpha ],$ we have :
\[O_F\otimes_AA_P = A[\alpha]\otimes_A A_P,\]
where $A_P$ is the $P$-adic completion of $A.$ Therefore, for all but finitely many primes $P,$ we have :
\[\forall x\in O_F, \, x^{q^d}\equiv x \pmod{PO_F}.\]
This implies that all but finitely many primes $P$ of $A$ are totally split in $F.$ By the \v Cebotarev density theorem (see for example \cite[Chapter VII, Section 13]{Neukirch_ANT}), this implies that $F=K$ and thus $\alpha \in A.$

In general there exists a minimal integer $m\geq 0$ such that $\alpha^{p^m}$ is separable over $K$. If $m\ge 1$, then $x=\alpha^{p^m}\in A$ and for all but finitely many primes $P$ of $A:$
\[x^{q^d}\equiv x\pmod{P^{p^m}A}.\]
Therefore $\alpha^{p^{m-1}} \in A$ by lemma \ref{convlem1}. We deduce that $\alpha\in A$.
\item Let $b\in A\setminus \{0\}$ such that $x=b\alpha \in \overline{A}.$  Then  by the first assertion of the Lemma, $x\in A.$ Therefore $\alpha \in K.$ 
\end{enumerate}
 \end{proof}
\begin{prop}
\label{ConvAlg1} For all $s\ge 1$, if $\XX =(X_1, \dots, X_s)$, then
\[\left\{F \in \Cinf[\XX] ; \mathfrak L(F,Z) \in \overline{A} [\XX][[Z]]\otimes_A K \right\} = A[\XX].\]
\end{prop}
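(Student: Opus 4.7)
The inclusion $A[\XX]\subseteq \{F\in \Cinf[\XX] : \mathfrak L(F,Z)\in \overline{A}[\XX][[Z]]\otimes_A K\}$ is Theorem \ref{logalgthm}. For the converse, fix $a\in A\setminus\{0\}$ with $a\mathfrak L(F,Z)\in \overline{A}[\XX][[Z]]$; reading the coefficient of $Z$ shows $F=Z_0(F)\in \overline{A}[1/a][\XX]$. My plan is to first descend $F$ into $K[\XX]$ via the congruence used in the proof of Lemma \ref{logalglem3}, and then to use the functional identity $(C_P-P*)(\mathfrak L(F,Z))=\exp_C(Pl^*(F,Z))$ to show that a pole of $F$ would force the right-hand side to have unbounded $P$-adic denominators.

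For each prime $P$ of $A$ of degree $d$ not dividing $a$, one has $F\in\overline{A}_{(P)}[\XX]$, so $l^*(F,Z)\in\overline{A}_{(P)}[\XX][[Z]]$ and the right-hand side of the identity lies in $P\overline{A}_{(P)}[\XX][[Z]]$. Extracting the coefficient of $Z^{q^d}$ on the left and using $C_P\equiv \tau^d \pmod{PA[\tau]}$ to absorb the summands $[P]_iZ_{d-i}(F)^{q^i}$ with $1\le i\le d-1$ into $P\overline{A}_{(P)}[\XX]$, one is left with $F^{q^d}\equiv P*F\pmod{P\overline{A}_{(P)}[\XX]}$. Since $C_P(X_j)\equiv X_j^{q^d}\pmod{PA[X_j]}$, this reads $F^{q^d}\equiv F(X_1^{q^d},\dots,X_s^{q^d})\pmod{P\overline{A}_{(P)}[\XX]}$, and matching monomial coefficients yields $f_\ii^{q^d}\equiv f_\ii\pmod{P\overline{A}_{(P)}}$ for every $\ii$. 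As this holds for all but finitely many primes, Lemma \ref{convlem2}(2) forces $f_\ii\in K$, and combined with $F\in\overline{A}[1/a][\XX]$ and $\overline{A}\cap K=A$ we obtain $F\in A[1/a][\XX]$ and $a\mathfrak L(F,Z)\in A[\XX][[Z]]$.

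The main obstacle is to rule out the remaining poles. Suppose for contradiction that $v_P(F)=-n<0$ for some prime $P$ of degree $d$, and fix $\ii^*$ with $v_P(f_{\ii^*})=-n$. Working in $K[\XX][[Z]]$, the identity reads $(C_P-P*)(\mathfrak L(F,Z))=\sum_{j\ge 0}P^{q^j}(l^*(F,Z))^{q^j}/D_j$; I track the coefficient of $\XX^{q^k\ii^*}$ in the coefficient of $Z^{q^k}$ on the right. The $j$-th summand contributes $P^{q^j}g_{q^{k-j}\ii^*}^{q^j}/D_j$, where $g_{q^{k-j}\ii^*}$ is the $\XX^{q^{k-j}\ii^*}$-coefficient of $L^*_{k-j}(F)=\sum_{a\in A_{+,k-j},\,P\nmid a}(a*F)/a$. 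Since $C_a$ has coefficients in $A$ and the leading monomial of $a*(\XX^{\ii^*})$ is $\XX^{q^{k-j}\ii^*}$ with coefficient $1$, one has $v_P(g_{q^{k-j}\ii^*})\ge -n$, and the $j$-th contribution has $P$-valuation at least $\alpha_j:=q^j(1-n)-v_P(D_j)$. For $j=k$, the contribution equals exactly $P^{q^k}f_{\ii^*}^{q^k}/D_k$ of valuation $\alpha_k$. The recursion $D_k=(\theta^{q^k}-\theta)D_{k-1}^q$ shows that $v_P(D_k)$ is strictly increasing once $k\ge d$, so $\alpha_k<\alpha_j$ for all $j<k$ whenever $k\ge d$; by the ultrametric inequality the full coefficient of $\XX^{q^k\ii^*}$ then has $P$-valuation exactly $\alpha_k\to -\infty$. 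On the other hand, since $a\mathfrak L(F,Z)\in A[\XX][[Z]]$ and the operators $C_P$, $P*$ worsen the common denominator from $a$ to at most $a^{q^d}$, we have $v_P((C_P-P*)(\mathfrak L(F,Z)))\ge -q^dv_P(a)$, a fixed lower bound. The resulting contradiction forces $v_P(F)\ge 0$ for every prime $P$, and hence $F\in A[\XX]$.
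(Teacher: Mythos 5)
Your proof is correct. Its first two stages coincide with the paper's: you read $F$ off the coefficient of $Z$ to get $F\in\overline K[\XX]$ with denominator dividing $a$, and then use the coefficient of $Z^{q^d}$ in $(C_P-P*)(\mathfrak L(F,Z))$ together with Lemma \ref{convlem2} to descend to $K[\XX]$, exactly as in the paper. Where you genuinely diverge is the final descent from $K[\XX]$ to $A[\XX]$. The paper chooses $c\in A\setminus\{0\}$ with $cF\in A[\XX]$, observes that $C_c(\mathfrak L(F,Z))=\mathfrak L(cF,Z)\in A[\XX,Z]$ by Theorem \ref{logalgthm}, so that $\mathfrak L(F,Z)$ is integral over $A[\XX][[Z]]$, and concludes from the fact that $A[\XX][[Z]]$ is integrally closed (Bourbaki); you instead run a direct $P$-adic estimate on the identity $(C_P-P*)(\mathfrak L(F,Z))=\exp_C(Pl^*(F,Z))$. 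Your estimate checks out: assuming $v_P(F)=-n<0$ and fixing $\ii^*$ with $v_P(f_{\ii^*})=-n$, the $j=k$ term contributes exactly $P^{q^k}f_{\ii^*}^{q^k}/D_k$ to the $\XX^{q^k\ii^*}$-coefficient, the terms with $j<k$ are bounded below by $\alpha_j=q^j(1-n)-v_P(D_j)$, and since $v_P(D_j)$ vanishes for $j<d$ and is strictly increasing from $j=d$ on, one indeed has $\alpha_k<\alpha_j$ for all $j<k$ once $k\ge d$; hence the valuation equals $\alpha_k\to-\infty$, contradicting the uniform bound $-q^d v_P(a)$ coming from $a\,\mathfrak L(F,Z)\in A[\XX][[Z]]$. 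This alternative buys you independence from the integral-closedness of $A[\XX][[Z]]$ and from a second invocation of the log-algebraicity theorem, at the price of a more delicate coefficient computation; the paper's route is shorter but leans on those two external inputs. One cosmetic remark: in absorbing the cross terms of $C_P$ you should also mention the $i=0$ term $PZ_d(F)$, not only $1\le i\le d-1$, though this is harmless since it visibly lies in $P\overline{A}_{(P)}[\XX]$.
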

\begin{proof}  Let $F\in \Cinf[\XX]$ such that $\frak L(F,Z) \in \overline{A} [\XX][[Z]]\otimes_AK,$ \emph{i.e.}  there exists $b\in A\setminus\{0\}$ such that $b\frak L(F,Z) \in  \overline{A} [\XX][[Z]].$ Since $\frak L(F,Z) \equiv FZ \pmod{Z^q},$ we get $F\in \overline{K} [\XX].$ Let $P$ be a prime of $A$ of degree $d$ not dividing $b$. Then by the proof of Lemma \ref{logalglem3},
\[\mathfrak L(F,Z)\in \overline{A}_{(P)}[\XX][[Z]] \textrm{ and }(C_P-P*)\left(\mathfrak L(F,Z)\right)\in P\overline{A}_{(P)}[\XX][[Z]]\]
and since $C_P \equiv \tau^d \mod PA[\tau]$, the coefficient of $Z^{q^d}$ in $(C_P-P*)\left(\mathfrak L(F,Z)\right)$ is congruent to 
$ F^{q^d}-P*F \mod  P\overline{A}_{(P)}[\XX][[Z]]$.
Therefore
\[F(X_1^{q^d}, \dots , X_s^{q^d}) \equiv F^{q^d} \mod{P\overline{A}_{(P)}[\XX]}.\]
Thus, by Lemma \ref{convlem2}, we get $F\in K[\XX].$
Select now $c\in A\setminus \{0\}$ such that $cF\in A[\XX].$ Then by Theorem \ref {logalgthm}:
\[C_c(\frak L(F,Z))\in A[X_1, \dots , X_s, Z].\]
Therefore $\frak L(F,Z)\in A[\XX][[Z]]\otimes_A K$ is integral over $A[\XX][[Z]].$ But $A[\XX][[Z]]$ is integrally closed (see\cite[Chapitre 5, Proposition 14]{Bourbaki_AC56}) thus $\frak L(F,Z) \in A[\XX][[Z]]$ and this implies that $F\in A[\XX]$ since $\frak L(F,Z)\equiv FZ \mod Z^q$. We then have the direct inclusion, the equality follows by Theorem \ref{logalgthm}.
\end{proof}
Remark that if we only suppose that $\mathfrak L(F,Z)\in \overline{K}[\XX][[Z]]$, then the result no longer holds, for instance $F=\frac X\theta \in K[X]\backslash A[X]$ and $\mathfrak L(F,Z) \in K[X][[Z]]$.  Note that the above Proposition implies that $\frak L^{-1} (\overline{K} [\XX,Z])= A[\XX].$ In fact we have :
\begin{prop}
\label{ConvAlg2}
\[\left\{F \in \Cinf[\XX] ; \mathfrak L(F,Z) \in \Cinf [\XX,Z] \right\} = A[\XX].\]
\end{prop}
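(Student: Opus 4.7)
The forward inclusion $A[\XX] \subseteq \{F\in\Cinf[\XX] : \mathfrak L(F,Z)\in\Cinf[\XX,Z]\}$ is exactly Theorem \ref{logalgthm}. For the reverse, I proceed by induction on the number of nonzero monomials of $F=\sum_\ii f_\ii \XX^\ii$, aiming at each step to show the coefficient $f_{\ii_0}$ of a maximal-total-degree monomial lies in $A$ and then strip that monomial off. By additivity of $\mathfrak L(\cdot,Z)$ one has $\mathfrak L(F,Z)=\sum_\ii\mathfrak L(f_\ii\XX^\ii,Z)$; since $S_\ii:=\mathfrak L(\XX^\ii,Z)\in A[\XX,Z]$ has no constant term in $Z$ by Theorem \ref{logalgthm}, the formal identity $\exp_C\log_C = 1$ on $Z$-power series with zero constant term yields $\mathfrak L(f_\ii\XX^\ii,Z)=\exp_C(f_\ii\log_C S_\ii)$ in $\Cinf[\XX][[Z]]$. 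A direct computation in the skew series ring $\Cinf[[\tau]]$ gives
\[\exp_C\cdot f\cdot\log_C \;=\; \sum_{m\ge 0} c_m(f)\,\tau^m,\qquad c_m(f):=\sum_{i=0}^m\frac{f^{q^i}}{D_i\, l_{m-i}^{q^i}},\]
so that $\mathfrak L(F,Z)=\sum_\ii\sum_{m\ge 0}c_m(f_\ii)\,S_\ii^{q^m}$.

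Fix $\ii_0$ maximizing $|\ii|$ among indices with $f_\ii\neq 0$. Expanding $L_{k-j}(\XX^\ii)=\sum_{a\in A_{+,k-j}}C_a(\XX)^\ii/a$ and using that the leading $\tau$-coefficient $[a]_{k-j}$ of $C_a$ equals $1$ for monic $a$ of degree $k-j$, one checks that each $Z_k(\XX^\ii)$ admits a unique monomial of maximal $\XX$-total degree, namely $\XX^{\ii q^k}$, with coefficient $Z_k(1)$. Moreover, Lemma \ref{logalglem2} gives $\|Z_k(1)\|\le q^{-k}<1$ for $k\ge 1$, while Theorem \ref{logalgthm} applied to $F=1$ forces $Z_k(1)\in A$; together these imply $Z_k(1)=0$ for $k\ge 1$, i.e., $\mathfrak L(1,Z)=Z$. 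Consequently, only the pair $(\ii,k)=(\ii_0,0)$ contributes to the coefficient of $\XX^{\ii_0 q^m}Z^{q^m}$ in $\mathfrak L(F,Z)$:
\[[\XX^{\ii_0 q^m}][Z^{q^m}]\,\mathfrak L(F,Z) \;=\; c_m(f_{\ii_0}).\]
Since by hypothesis $\mathfrak L(F,Z)$ is polynomial in $Z$, this vanishes for $m\gg 0$, so $c_m(f_{\ii_0})=0$ for all large $m$.

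The vanishing of $c_m(f_{\ii_0})$ is a nontrivial $K$-linear relation among $f_{\ii_0}, f_{\ii_0}^q, \dots, f_{\ii_0}^{q^m}$ with nonzero coefficients $1/(D_i l_{m-i}^{q^i})$; $K$-linear independence of the powers $\{f^{q^i}\}_i$ for a transcendental $f$ forces $f_{\ii_0}\in\overline K$. Then $\mathfrak L(f_{\ii_0},Z)=\sum_m c_m(f_{\ii_0})Z^{q^m}\in\overline K[Z]$, and Proposition \ref{ConvAlg1} (applied to $f_{\ii_0}$ viewed as a constant polynomial, introducing a dummy variable if needed) yields $f_{\ii_0}\in A$. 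Setting $F'=F-f_{\ii_0}\XX^{\ii_0}$, Theorem \ref{logalgthm} applied to $f_{\ii_0}\XX^{\ii_0}\in A[\XX]$ gives $\mathfrak L(F',Z)=\mathfrak L(F,Z)-\mathfrak L(f_{\ii_0}\XX^{\ii_0},Z)\in\Cinf[\XX,Z]$, and $F'$ has one fewer nonzero monomial than $F$; the induction concludes $F\in A[\XX]$. The main technical obstacle is establishing the uniqueness of the top $\XX$-total-degree monomial of each $Z_k(\XX^\ii)$, which prevents cancellations between distinct $\ii$ of the same total degree and cleanly isolates the coefficient $c_m(f_{\ii_0})$.
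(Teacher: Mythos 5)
Your argument is correct, and its engine is the same as the paper's: write $\mathfrak L(F,Z)=\sum_{\ii}(\exp_C f_\ii\log_C)\bigl(\mathfrak L(\XX^{\ii},Z)\bigr)$ and detect the coefficient $f_{\ii_0}$ of a leading monomial. Where you diverge is in the bookkeeping and, more substantially, in the endgame. The paper orders exponents lexicographically ($\mdeg$), introduces the relative degree $\rdeg$ to see that, after subtracting the part of $F$ with coefficients in $A$, the term $C_{\alpha_{\ii_0}}(\XX^{\ii_0}Z)$ with $\ii_0=\mdeg(F)$ cannot be cancelled, and concludes in one stroke from the criterion (quoted from Goss) that $C_\alpha(T)=\exp_C(\alpha\log_C T)$ lies in $\Cinf[T]$ if and only if $\alpha\in A$. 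You instead work with the total degree: your claims that $Z_k(\XX^{\ii})$ has a unique monomial of maximal total degree, namely $\XX^{q^k\ii}$ with coefficient $Z_k(1)$, and that $Z_k(1)=0$ for $k\ge 1$ (so $\mathfrak L(1,Z)=Z$), are correct, and they do isolate the coefficient of $\XX^{q^m\ii_0}Z^{q^m}$ as $c_m(f_{\ii_0})$ even when several indices share the maximal total degree, since distinct $\ii$ give distinct top monomials $\XX^{q^m\ii}$. From one vanishing relation $c_m(f_{\ii_0})=0$ with $m\ge 1$ you get $f_{\ii_0}\in\overline K$, and then you pass through Proposition \ref{ConvAlg1} (note that $\mathfrak L(f_{\ii_0},Z)$ is a $Z$-polynomial with finitely many coefficients in $\overline K$, so a common denominator in $A$ puts it in $\overline A[\XX][[Z]]\otimes_AK$, as that proposition requires; no dummy variable is needed) and finish by induction on the number of monomials. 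What your route buys is that it never invokes the Goss criterion for $C_\alpha$; what it costs is the heavier arithmetic input of Proposition \ref{ConvAlg1} (Frobenius congruences and \v Cebotarev), which the paper's proof of the present proposition does not use, together with the extra induction. Both proofs are complete; the paper's is shorter and independent of Proposition \ref{ConvAlg1}, yours is more explicit about which coefficients of $\mathfrak L(F,Z)$ carry the obstruction.
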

\begin{proof}
Recall that if $\ii =(i_1, \dots, i_s)\in \N^s$, then $\XX^\ii = X_1^{i_1}\cdots X_s^{i_s}$. If $F \in \Cinf[\XX]$, write $F= \sum_{\ii}\alpha_\ii \XX^\ii$ and define $\mdeg(F) \in \N^s_+\cup\{\pm\infty\}$ to be the maximum for the lexicographic ordering of the exponents $\ii$ such that $\alpha_\ii \neq 0$. Now, if $F =\sum_{k\ge 1}F_kZ^k \in Z\Cinf[\XX][[Z]]$ where for all $k$, $F_k\in \Cinf[\XX]$, then we define the relative degree of $F$, $\rdeg(F) \in \R^s_+\cup\{\pm\infty\}$, to be
\[\rdeg(F) = \left\{ 
\begin{array}{cc}
-\infty & \textrm{ if }F=0\\
\displaystyle{\sup_{k\ge 1}\left(\frac{\mdeg(F_k)}{k}\right) \in \R_+^s\cup\{+\infty\}} & \textrm{otherwise.}
\end{array}\right.\]
where the supremum is still relative to the lexicographic ordering on $\R_+^s$ and is well defined in $\R_+^s\cup\{+\infty\}$. Remark the following properties of $\rdeg$ : if $F,G \in Z\Cinf[\XX][[Z]]$ and $\psi$ is an $\F_q$-linear power series in $\Cinf[[T]]$ then
\begin{itemize}
\item $\rdeg(F+G) \le \max(\rdeg(F) , \rdeg(G))$ with equality if $\rdeg(F) \neq \rdeg(G)$,
\item $\rdeg(F^q)=\rdeg(F)$,
\item $\rdeg(\psi(F)) \le \rdeg(F)$,
\item if $\psi\neq 0$, for $k\ge 1$ and $\ii\in \N^s$, $\rdeg(\psi(\XX^\ii Z^k)) = \frac \ii k $,
\item if $F =\sum_{k\ge 1} F_kZ^k$ is such that there exists infinitely many indices $k_j$ such that $\mdeg(F_{k_j})= k_j\rdeg(F)$ (in particular $F\notin A[\XX,Z]$) and $\rdeg(F) > \rdeg(G)$ then $F+G \notin  A[\XX,Z]$.
\end{itemize}
For the last property, if we write $G =\sum_{k\ge 1} G_kZ^k$, then $F+G =\sum_{k\ge 1} (F_k+G_k)Z^k$ with for all $j$, $\mdeg(F_{k_j}+G_{k_j}) = k_j\rdeg(F)$ so that $F_{k_j}+G_{k_j}\neq 0$ and $F+G \notin  A[\XX,Z]$.

Let now $\ii\in \N^s$, for $k\ge 0$, we have 
\[L_k(\XX^\ii) = \frac{\XX^{q^k\ii}}{l_k}+G_{k,\ii}\]
where $G_{k,\ii}\in K[\XX]$ satisfies $\mdeg(G_{k,\ii}) <q^k\ii$. Thus
\begin{equation}\label{Ldegrel}
\mathfrak L(\XX^\ii,Z) = \XX^\ii Z+F_\ii
\end{equation}
where $F_\ii \in Z^q\Cinf[\XX,Z]$ has relative degree $\rdeg(F_\ii)<\ii$.

Fix $\alpha\in \Cinf$, then
\[C_\alpha(T) = \exp_C(\alpha\log_C(T)) \in \Cinf[[T]]\]
is a $\F_q$-linear power series, and $\C_\alpha(T)\in \Cinf[T]$ if and only if $\alpha\in A$ (see \cite[Chapter 3]{Goss_BasicStructures}).

Let now $F \in \Cinf[\XX]\backslash A[\XX]$, we want to prove that $\mathfrak L(F,Z) \notin \Cinf[\XX,Z]$. By Theorem \ref{logalgthm}, we can suppose 
$F = \sum_{\ii}\alpha_\ii \XX^\ii$ with for all $\ii$ such that $\alpha_\ii \neq 0$, $\alpha_\ii \notin A$.
Then Equation \eqref{Ldegrel} gives
\[\mathfrak L(F,Z) = \sum_{\ii}C_{\alpha_\ii} \left(\mathfrak L(\XX^\ii, Z)\right) = \sum_{\ii} C_{\alpha_\ii} \left(\XX^\ii Z\right) + C_{\alpha_\ii} \left(F_\ii\right).\]
If $\ii_0=\mdeg(F)$, then we deduce that
\[\mathfrak L(F,Z) =C_{\alpha_{\ii_0}} \left(\XX^{\ii_0}Z\right)+G\]
with $\rdeg(G)<\ii_0$. Since $C_{\alpha_{\ii_0}}(\XX^{\ii_0}Z) \notin \Cinf[\XX,Z]$ has infinitely many terms of relative degree $\ii_0$, we have $\mathfrak L(F,Z) \notin \Cinf[\XX,Z]$.
\end{proof}

\section{Multivariable $L$-functions}
\subsection{Frobenius actions}
Let $\Kinf\subseteq E\subseteq \Cinf$ be a field complete with respect to $|\cdot|_\infty$.
Observe that if $n\ge 0$ and $1\le i \le s$, 
\[H_{q^n}(X_i)^q = C_{\theta^n}(X_i)^q = C_{\theta^{n+1}}(X_i) -\theta C_{\theta^n}(X_i) = (t_i-\theta).H_{q^n}(X_i).\]
Thus we define the following action of $\tau$ on $\TT_s(E)$ :
\[\forall f=\sum_\ii f_\ii \ttt^\ii \in \TT_s(E), \ \ \tau(f) = (t_1-\theta)\cdots (t_s-\theta) \sum_\ii f_\ii^q \ttt^\ii\]
and we get for all $f\in \TT_s(E)$ the equality in $ \BB_s^{\lin}(E)$ :
\[ \tau(f.(X_1\cdots X_s)) = \tau(f) .( X_1\cdots X_s).\]
We define then on $\TT_s(E)$ the operator $\varphi$ which will be a Frobenius acting only on coefficients, namely :
\[\forall f=\sum_\ii f_\ii \ttt^\ii \in \TT_s(E), \ \ \varphi(f) = \sum_\ii f_\ii^q \ttt^\ii,\]
so that on $ \TT_s(E)$, we have $\tau = (t_1-\theta)\cdots (t_s-\theta) \varphi$.
Moreover, for $d\ge 1$, if we define,  $b_d(t) = (t-\theta) (t-\theta^q)\cdots(t-\theta^{q^{d-1}})$, then for all $f \in \TT_s(E)$,
\[ \tau^d(f)= b_d(t_1)\cdots b_d(t_s)\varphi^d(f).\]
We also set $b_0(t) = 1$ so that the above relation still holds for $d=0$. Remark that for all $f,g\in \TT_s(E)$, and $d\ge 0$,
\[\tau^d(fg) = \tau^d(f) \varphi^d(g).\]
Observe moreover that
\[\forall f\in \TT_s(E), \ \forall d\ge 0, \ \ \  \|\varphi^d(f)\|_\infty =  \|f\|_\infty^{q^d} \textrm{ and }\|\tau^d(f)\|_\infty = q^{s\frac{q^d-1}{q-1}} \|f\|_\infty^{q^d}.\]
We deduce that $\exp_C =\sum_{j\ge 0} \frac 1 {D_j} \tau^j$ is defined on $\TT_s(E)$ and that for all $f\in \TT_s(E)$, we have in $\BB_s^{\lin}(E)$ :
\begin{equation}\label{equaexp}
\exp_C(f.(X_1\cdots X_s)) = \exp_C(f).(X_1\cdots X_s).
\end{equation}

\bigskip

We extend now the action of $E[\ttt]$ on $E[\XX]$ to an action of $E[\ttt][[z]]$ on $E[\XX][[Z]]$ via :
\[\left(\sum_{k\ge 0} f_k(\ttt) z^k \right). \left(\sum_{n\ge 0} F_n(\XX)Z^n \right)= \sum_{k\ge 0} \sum_{n\ge 0} \left(f_k(\ttt).F_n(\XX)\right) Z^{nq^k}\]
and we let $\tau$ act on $Z$ via $\tau (Z) = Z^q$. Since $\tau(Z)=z.Z$, we define on $E[\ttt][[z]]$ the operator $\tau_z$ :
\[\forall f=\sum_{k\ge 0}\sum_\ii f_\ii \ttt^\ii z^k \in E[\ttt][[z]], \ \ \tau_z(f) = z(t_1-\theta)\cdots (t_s-\theta) \sum_{k\ge 0}\sum_\ii f_\ii^q \ttt^\ii z^k = 
\sum_{k\ge 0}z^{k+1}\tau\left(\sum_\ii f_\ii^q \ttt^\ii\right).\]
Thus if we extend $\varphi$ by
\[\forall f=\sum_{k\ge 0}\sum_\ii f_\ii \ttt^\ii z^k \in E[\ttt][[z]], \ \ \varphi(f) = \sum_{k\ge 0}\sum_\ii f_\ii^q \ttt^\ii z^k,\]
we get for all $f=\sum_{k\ge 0}f_kz^k\in  E[\ttt][[z]]$ and $d\ge 0$, $\tau_z^d(f) = z^db_d(t_1)\cdots b_d(t_s)\varphi^d(f)$.
By construction, if $f=E[\ttt][[z]]$, then $f.(X_1\cdots X_sZ) \in E[\XX][[Z]]$ and for all $d\ge 0$,
\[\tau^d(f.(X_1\cdots X_sZ) ) = \tau_z^d(f).(X_1\cdots X_sZ).\]
We have then an operator $\exp_C =\sum_{j\ge 0} \frac 1 {D_j} \tau^j$ on $ZE[\XX][[Z]]$ and an operator $\exp_z =\sum_{j\ge 0} \frac 1 {D_j} \tau_z^j$ on $E[\ttt][[z]]$ such that for all $f\in E[\ttt][[z]]$,
\begin{equation}\label{equaexpz}
\exp_C(f.(X_1\cdots X_sZ)) = \exp_z(f).(X_1\cdots X_sZ).
\end{equation}
A similar property holds for $\log_C =\sum_{j\ge 0} \frac 1 {l_j} \tau^j$ and $\log_z =\sum_{j\ge 0} \frac 1 {l_j} \tau_z^j$ :
\begin{equation}\label{equalogz}
\log_C(f.(X_1\cdots X_sZ)) = \log_z(f).(X_1\cdots X_sZ)
\end{equation}
where $\log_z$ and $\exp_z$ define reciprocal bijection of $E[\ttt][[z]]$.

We now state compatibility results for evaluations at $Z=1$ and $z=1$ :
\begin{lem}\label{lemevalz}
Let $F(\XX,Z) = \sum_{n\ge 0}F_n(\XX) Z^n\in E[\XX][[Z]]$ with $\forall n\ge 0, F_n(\XX) \in E[\XX]$ and $\lim_{n \to \infty}\|F_n \|=0$, let $f =\sum_{k \ge 0}f_kz^k \in E[\ttt][[z]]$ with $\forall k \ge 0, f_k\in E[\ttt]$ and $\lim_{k \to \infty}\|f_k \|_\infty=0$, then $F$ and $f.F$ converge in $\BB_s(E)$ at $Z=1$, $f$ converges in $\TT_s(E)$ at $z=1$, and we have the following equality in $\BB_s(E)$ :
\[\left(f.F(\XX,Z)\right)_{|Z=1} = f(\ttt,1).F(\XX,1).\]
\end{lem}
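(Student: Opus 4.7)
The plan is to verify the four assertions (two convergences of the one-variable series, convergence of $f.F$ at $Z=1$, and the final equality) by standard ultrametric manipulations, with the key input being the norm estimate
\[\|g.G\| \le \|g\|_\infty \|G\|\qquad \forall g \in \TT_s(E), G\in \BB_s(E),\]
which follows from Lemma \ref{lemNormGauss} by continuity after extending the action from polynomials to the completions. The first two convergences are immediate: $\|F_n\|\to 0$ and the completeness of $\BB_s(E)$ give $F(\XX,1) = \sum_n F_n \in \BB_s(E)$, and similarly $f(\ttt,1) = \sum_k f_k \in \TT_s(E)$ by completeness of $\TT_s(E)$.

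For the third convergence, I would expand $f.F$ using the definition of the action and regroup by powers of $Z$:
\[f.F = \sum_{m\ge 0} G_m Z^m, \qquad G_m = \sum_{\substack{(k,n)\in \N^2 \\ nq^k = m}} f_k . F_n.\]
For $m=0$ the condition $nq^k = 0$ forces $n=0$, so $G_0 = \sum_k f_k . F_0 = f(\ttt,1).F_0$, which converges in $\BB_s(E)$ by the norm estimate; for each $m\ge 1$ the sum defining $G_m$ is finite (only finitely many $k$ satisfy $q^k\mid m$). The remaining point is $\|G_m\|\to 0$: given $\varepsilon >0$, pick $K$ with $\|f_k\|_\infty \cdot \sup_n\|F_n\| <\varepsilon$ for $k\ge K$, and $N$ with $\|F_n\|\cdot\sup_k\|f_k\|_\infty <\varepsilon$ for $n\ge N$; then for every $m \ge Nq^K$ and every pair $(k,n)$ with $nq^k=m$, $n\ge 1$, one has $k\ge K$ or $n\ge N$, and the ultrametric bound yields $\|G_m\|<\varepsilon$.

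For the equality, the crucial observation is that the family $\bigl(f_k . F_n\bigr)_{(k,n)\in \N^2}$ is summable in $\BB_s(E)$: by the norm estimate, $\|f_k.F_n\|\le \|f_k\|_\infty\|F_n\|$ tends to $0$ along the Fréchet filter on $\N^2$. In a complete ultrametric normed space a summable family may be summed in any order with the same result. Summing first over the fibres $\{(k,n) : nq^k = m\}$ and then over $m$ reproduces $\sum_m G_m = (f.F)\big|_{Z=1}$; on the other hand, summing first over $k$ and then over $n$ (using bilinearity and continuity of the action, together with the convergence of $\sum_k f_k$ in $\TT_s(E)$ and $\sum_n F_n$ in $\BB_s(E)$) yields $f(\ttt,1).F(\XX,1)$. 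The two therefore agree.

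There is no genuine obstacle here; the content is purely the interchange of double summation, controlled throughout by the multiplicativity of the Gauss norm recorded in Lemma \ref{lemNormGauss}. The only point requiring a little care is the bookkeeping in the regrouping $nq^k = m$, which is what makes the evaluation at $Z=1$ compatible with the action of $z=1$ rather than, say, with a naive term-by-term evaluation.
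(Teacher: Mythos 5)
Your proof is correct and follows essentially the same route as the paper's: the paper likewise reduces everything to the summability of the family $(f_k.F_n)_{(k,n)\in\N^2}$ granted by Lemma \ref{lemNormGauss} and identifies both sides of the equality with the double sum $\sum_{k\ge 0}\sum_{n\ge 0} f_k.F_n$. Your write-up merely makes explicit the regrouping along the fibres $nq^k=m$ and the $\|G_m\|\to 0$ estimate, which the paper leaves implicit.
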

\begin{proof}
The convergence of $F$ at $Z=1$ and of $f$ at $z=1$ are obvious, the convergence of $f.F$ follows from the equality $\|f_k.F_n\| = \|f_k\|_\infty\|F_n\|$ from Lemma \ref{lemNormGauss}. Finally, both sides of the equality are equal to $\sum_{k\ge 0}\sum_{n\ge 0} f_k.F_n$.
\end{proof}
\begin{lem}\label{lemevaltau}
Let $\eta=\sum_{n\ge 0} \eta_n \tau_z^n \in E[[\tau_z]]$ and $\eta^{1} =\sum_{n\ge 0} \eta_n\tau^n \in E[[\tau]]$, let $f =\sum_{k \ge 0}f_kz^k \in E[\ttt][[z]]$ with $\lim_{k \to \infty}\|f_k \|_\infty=0$, write $M= \sup_{k \ge 0}\|f_k \|_\infty$ and suppose $\lim_{n \to \infty}|\eta_n| (q^{\frac{s}{q-1}}M)^{q^n}=0$ ; 
write finally $g(\ttt,z) = \eta(f(\ttt,z)) =  \sum_{n\ge 0} \eta_n \tau_z^n(f) \in E[\ttt][[z]]$ ;
then $f$ and $g$ converge in $\TT_s(E)$ at $z=1$ and we have the following equality in $\TT_s(E)$ :
\[\eta^{1}(f(\ttt,1)) = g(\ttt,1).\]
\end{lem}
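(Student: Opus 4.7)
\textbf{Proof plan for Lemma \ref{lemevaltau}.} The plan is to first establish the three convergence claims, then observe that evaluation at $z=1$ intertwines $\tau_z^n$ with $\tau^n$, which immediately yields the desired identity.

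For the convergence of $f$ at $z=1$ in $\TT_s(E)$: this is immediate from $\lim_{k\to\infty}\|f_k\|_\infty = 0$, and moreover one has $\|f(\ttt,1)\|_\infty \le M$. Next I would estimate $\|\tau_z^n(f)|_{z=1}\|_\infty$. Using the identity $\tau_z^n(f) = z^n b_n(t_1)\cdots b_n(t_s)\varphi^n(f)$ stated in the excerpt, together with the fact that $\varphi$ acts coefficient-wise in $z$ (so $\varphi^n(f)|_{z=1} = \varphi^n(f|_{z=1})$) and that $\|\varphi^n(h)\|_\infty = \|h\|_\infty^{q^n}$, I get
\[
\tau_z^n(f)|_{z=1} = b_n(t_1)\cdots b_n(t_s)\,\varphi^n(f(\ttt,1)).
\]
Since $\|b_n(t_i)\|_\infty = q^{(q^n-1)/(q-1)}$, this gives
\[
\|\tau_z^n(f)|_{z=1}\|_\infty \le q^{s(q^n-1)/(q-1)}\|f(\ttt,1)\|_\infty^{q^n} \le \bigl(q^{s/(q-1)}M\bigr)^{q^n}.
\]
The hypothesis $\lim_{n\to\infty}|\eta_n|(q^{s/(q-1)}M)^{q^n}=0$ then forces both series
\[
g(\ttt,1) = \sum_{n\ge 0}\eta_n\,\tau_z^n(f)|_{z=1}\qquad\text{and}\qquad \eta^1(f(\ttt,1)) = \sum_{n\ge 0}\eta_n\,\tau^n(f(\ttt,1))
\]
to converge in $\TT_s(E)$, the second one via the same estimate since $\|\tau^n(f(\ttt,1))\|_\infty$ admits the identical bound (by the excerpt's formula $\|\tau^d(h)\|_\infty = q^{s(q^d-1)/(q-1)}\|h\|_\infty^{q^d}$).

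The heart of the argument is then just the identification of general terms. On $\TT_s(E)$, $\tau$ factorises as $\tau = (t_1-\theta)\cdots(t_s-\theta)\varphi$, so $\tau^n = b_n(t_1)\cdots b_n(t_s)\varphi^n$, which matches the expression computed above for $\tau_z^n(f)|_{z=1}$. Hence $\tau_z^n(f)|_{z=1} = \tau^n(f(\ttt,1))$ for every $n$, and summing against $\eta_n$ gives $g(\ttt,1) = \eta^1(f(\ttt,1))$. The potential subtlety, really the only thing to check carefully, is that the termwise evaluation $(\sum_n \eta_n \tau_z^n(f))|_{z=1} = \sum_n \eta_n (\tau_z^n(f)|_{z=1})$ is legitimate; this follows from Lemma \ref{lemevalz} applied with $F = \tau_z^n(f)\cdot 1$ viewed as an element of $E[\ttt][[z]]$ (equivalently, from continuity of the evaluation map $z\mapsto 1$ on the Banach space $\TT_s(E)\widehat{\otimes}E[[z]]$), once the estimate above ensures the required summability.
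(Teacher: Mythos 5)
Your argument is correct and is essentially the paper's own proof: the paper simply observes that both $\eta^{1}(f(\ttt,1))$ and $g(\ttt,1)$ unfold to the double sum $\sum_{n\ge 0,k\ge 0}\eta_n\tau^n(f_k)$, whose summability (guaranteed by exactly the norm estimate $\|\eta_n\tau^n(f_k)\|_\infty\le|\eta_n|(q^{s/(q-1)}M)^{q^n}$ that you compute) yields both convergence statements and the identity at once. Your version just makes explicit the intermediate identity $\tau_z^n(f)|_{z=1}=\tau^n(f(\ttt,1))$ and the interchange of summation and evaluation, which the paper leaves implicit.
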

\begin{proof} The convergence of $f$ is obvious.
Both sides of the above equality are easily seen to be equal to
\[ \sum_{n\ge 0, k\ge 0} \eta_n \tau^n(f_k(\ttt)) \]
which is the sum of a summable family in $\TT_s(E)$. This gives at once the convergence of both sides of the equality and the desired identity.
\end{proof}

Define now
\[E[\XX][[Z]]^{\lin} = \left\{F\in E[\XX][[Z]] ; F \text{ is linear with respect to each of the variables } X_1, \dots, X_s,Z \right\}.\]

\begin{lem}\label{leminjZ}
\begin{enumerate}
\item The map 
\[\left\{\begin{array}{ccc}
E[\ttt][[z]] & \rightarrow & E[\XX][[Z]] \\
f & \mapsto & f.(X_1\cdots X_sZ)
\end{array}\right.\]
is injective with image $E[\XX][[Z]]^{\lin}$,
\item $f\in E[\ttt][[z]]$ satisfies $f.(X_1\cdots X_sZ) \in A[\XX][Z]$ if, and only if, $f\in A[\ttt][z]$.
\end{enumerate}
\end{lem}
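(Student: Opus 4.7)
The plan is to bootstrap the lemma from its two-variable analogues, Lemmas \ref{isomElin} and \ref{isomAlin}, by decomposing everything coefficient-by-coefficient in $z$ and $Z$.

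First I would unfold the action: for $f=\sum_{k\ge 0}f_k(\ttt)z^k\in E[\ttt][[z]]$ the definition of the action gives
\[f.(X_1\cdots X_s Z)=\sum_{k\ge 0}\bigl(f_k(\ttt).(X_1\cdots X_s)\bigr)\,Z^{q^k},\]
which is visibly $\F_q$-linear in each $X_j$ (since $f_k(\ttt).(X_1\cdots X_s)\in E[\XX]^{\lin}$ by Lemma \ref{isomElin}) and $\F_q$-linear in $Z$ (since only the exponents $q^k$ of $Z$ appear). So the map lands in $E[\XX][[Z]]^{\lin}$. Injectivity is then immediate: if $f.(X_1\cdots X_s Z)=0$, all coefficients $f_k(\ttt).(X_1\cdots X_s)$ vanish in $E[\XX]$, and the injectivity clause of Lemma \ref{isomElin} forces $f_k=0$ for every $k$.

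For surjectivity onto $E[\XX][[Z]]^{\lin}$, I take $G=\sum_{n\ge 0}G_n(\XX)Z^n\in E[\XX][[Z]]^{\lin}$. The $\F_q$-linearity in $Z$ means $G_n=0$ unless $n$ is a power of $q$, so I may write $G=\sum_{k\ge 0}G_k(\XX)Z^{q^k}$, and the $\F_q$-linearity in the $X_j$'s gives $G_k\in E[\XX]^{\lin}$. Applying the isomorphism $E[\ttt]\xrightarrow{\sim} E[\XX]^{\lin}$ from Lemma \ref{isomElin} in each degree, I get a unique $f_k\in E[\ttt]$ with $f_k.(X_1\cdots X_s)=G_k$, and setting $f=\sum_k f_k z^k\in E[\ttt][[z]]$ gives a preimage of $G$. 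This proves (1).

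For (2), the implication $f\in A[\ttt][z]\Rightarrow f.(X_1\cdots X_s Z)\in A[\XX][Z]$ follows because a polynomial $f$ has only finitely many nonzero $f_k$, and each $f_k.(X_1\cdots X_s)\in A[\XX]^{\lin}$ by Lemma \ref{isomAlin}; the displayed formula then exhibits $f.(X_1\cdots X_s Z)$ as a polynomial in $Z$ with coefficients in $A[\XX]$. Conversely, if $f.(X_1\cdots X_s Z)\in A[\XX][Z]$, then the coefficient expansion above forces $f_k(\ttt).(X_1\cdots X_s)=0$ for all but finitely many $k$ (giving $f_k=0$ by (1)), and for the remaining $k$ each $f_k(\ttt).(X_1\cdots X_s)\in A[\XX]^{\lin}$, so Lemma \ref{isomAlin} gives $f_k\in A[\ttt]$. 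Hence $f\in A[\ttt][z]$. There is no real obstacle here; the only subtlety worth flagging is ensuring that $\F_q$-linearity in $Z$ really restricts the $Z$-exponents of $f.(X_1\cdots X_sZ)$ to the $q^k$, which is a direct consequence of the way the action sends $z^k$ to $Z^{q^k}$.
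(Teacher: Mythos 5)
Your proof is correct and takes exactly the route the paper does: the paper's own proof simply states that point (1) is an immediate consequence of Lemma \ref{isomElin} and point (2) of Lemma \ref{isomAlin}, and your coefficient-by-coefficient decomposition in $z$ and $Z$ is the intended way to make that reduction explicit. Nothing to add.
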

\begin{proof}
The first point is an immediate consequence of Lemma \ref{isomElin} and the second one a consequence of Lemma \ref{isomAlin}.
\end{proof}

\subsection{Anderson-Stark units}

We define for all integers $N \in \Z$, $s\ge 1$ :
\[L(N,s,z) = \sum_{d\ge 0} z^d \sum_{a\in A_{+,d}} \frac{a(t_1)\dots a(t_s)}{a^N} \in K[\ttt][[z]]\]
and 
\[L(N,s) = \sum_{d\ge 0} \sum_{a\in A_{+,d}} \frac{a(t_1)\dots a(t_s)}{a^N} \in \TT_s(\Kinf)\]
where $L(N,s,z) \in A[\ttt,z]$ if $N\le 0$ because of Lemma \ref{Simon}.
We also define the operator $\Gamma$ :
\[ \forall F\in \BB_s(\Cinf), \ \  \Gamma(F) = L(1,s).F \in \BB_s(\Cinf).\]
We will refer to $\Gamma$ as to \emph{Anderson's operator}.
Remark that $L(1,s)$ has norm $1$, so that $\Gamma$ is an isometry of $\BB_s(\Cinf)$, in particular, $\left\|\Gamma(X_1\cdots X_s)\right\| = q^{\frac s{q-1}}$.
We define now
\[ \sigma_s(\ttt,z) = \exp_z(L(1,s,z)).\]
We then have :
\begin{prop}\label{propLsigma}
\begin{enumerate}
\item $ L(1,s,z) = \log_z(\sigma_s(\ttt,z))$,
\item $L(1,s,z).X_1\cdots X_sZ = \mathfrak L(X_1\cdots X_s, Z) = \log_C(\Ss_s(\XX,Z))$,
\item $\sigma_s(\ttt,z). X_1\cdots X_s Z =  \Ss_s(\XX,Z)$ and $\sigma_s(\ttt,z) \in A[\ttt,z]$.
\end{enumerate}
\end{prop}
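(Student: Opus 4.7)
The plan is to prove the three assertions in the order (2), (3), (1). The essential content lies in (2); once it is established, (3) follows by applying $\exp_C$ via the compatibility \eqref{equaexpz} and invoking the log-algebraicity Theorem \ref{logalgthm}, and (1) is immediate because $\log_z$ inverts $\exp_z$ on $K[\ttt][[z]]$.

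The heart of (2) is the identity
\[ a*(X_1\cdots X_s) \;=\; \bigl(a(t_1)\cdots a(t_s)\bigr).(X_1\cdots X_s) \]
in $K[\XX]^{\lin}$, valid for every $a\in A$. By $\F_q$-linearity in $a$ it reduces to the case $a=\theta^k$, where it becomes $t_j^k.X_j = C_{\theta^k}(X_j)$, an identity built directly into the definition \eqref{actiontj} of the $\ttt$-action; one then multiplies over $j=1,\dots,s$. Combining this with $a*Z = Z^{q^{\deg_\theta a}} = z^{\deg_\theta a}.Z$, dividing by $a$, and summing over $a\in A_+$ yields
\[ L(1,s,z).(X_1\cdots X_sZ) \;=\; \sum_{d\ge 0}\sum_{a\in A_{+,d}}\frac{a*(X_1\cdots X_sZ)}{a} \;=\; l(X_1\cdots X_s,Z), \]
which is the first equality of (2); the second is just a rewriting of $\Ss_s = \exp_C(l(X_1\cdots X_s,Z))$ by applying $\log_C$.

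For (3), plug (2) into the compatibility \eqref{equaexpz}:
\[ \sigma_s(\ttt,z).(X_1\cdots X_sZ) \;=\; \exp_z(L(1,s,z)).(X_1\cdots X_sZ) \;=\; \exp_C\!\bigl(L(1,s,z).(X_1\cdots X_sZ)\bigr) \;=\; \Ss_s(\XX,Z). \]
The log-algebraicity theorem gives $\Ss_s\in A[\XX,Z]$, so Lemma \ref{leminjZ}(2) forces $\sigma_s(\ttt,z)\in A[\ttt,z]$. Part (1) follows by applying $\log_z$ to the defining equality $\sigma_s(\ttt,z) = \exp_z(L(1,s,z))$ and using that $\log_z$ and $\exp_z$ are reciprocal bijections of $K[\ttt][[z]]$. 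The only step that is not a purely formal transport through the injective embedding $f\mapsto f.(X_1\cdots X_sZ)$ of Lemma \ref{leminjZ}(1) is the translation between the Carlitz action $a*$ on $X_1\cdots X_s$ and the $\ttt$-action by $a(t_1)\cdots a(t_s)$; that dictionary is precisely what was set up in Lemma \ref{isomElin}, and once it is applied term by term to the defining series of $l(X_1\cdots X_s,Z)$, the rest of the argument is automatic.
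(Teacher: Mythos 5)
Your proposal follows the same route as the paper: the only content beyond the definitions is the identity $a*(X_1\cdots X_sZ)=\bigl(a(t_1)\cdots a(t_s)z^{\deg_\theta a}\bigr).(X_1\cdots X_sZ)$, after which (2) is the computation of $l(X_1\cdots X_s,Z)$, (3) follows from \eqref{equaexpz} together with Theorem \ref{logalgthm} and Lemma \ref{leminjZ}, and (1) from the fact that $\exp_z$ and $\log_z$ are reciprocal bijections --- this is exactly the paper's (very terse) argument, with the parts it declares ``clear'' written out. You also read the middle term of (2) correctly: since by definition $\mathfrak L(X_1\cdots X_s,Z)=\Ss_s(\XX,Z)$, the statement as printed must be understood with $l(X_1\cdots X_s,Z)$ in that place, which is what you prove.

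One step is justified incorrectly. You cannot reduce the identity $a*(X_1\cdots X_s)=(a(t_1)\cdots a(t_s)).(X_1\cdots X_s)$ to the case $a=\theta^k$ ``by $\F_q$-linearity in $a$'': for $s\ge 2$ neither side is additive in $a$, since $a\mapsto C_a(X_1)\cdots C_a(X_s)$ and $a\mapsto a(t_1)\cdots a(t_s)$ are products of $s$ linear maps, not linear maps. The correct order of operations is the reverse of the one you wrote: write $a=\sum_k a_k\theta^k$, expand $a(t_1)\cdots a(t_s)=\sum_{k_1,\dots,k_s}a_{k_1}\cdots a_{k_s}\,t_1^{k_1}\cdots t_s^{k_s}$, and use that each monomial $t_1^{k_1}\cdots t_s^{k_s}$ acts on $E[\XX]$ by the substitution $X_j\mapsto C_{\theta^{k_j}}(X_j)$, so that applied to $X_1\cdots X_s$ it yields $\prod_{j}C_{\theta^{k_j}}(X_j)$; the multi-sum then factors as $\prod_{j}\bigl(\sum_{k_j}a_{k_j}C_{\theta^{k_j}}(X_j)\bigr)=\prod_j C_a(X_j)=a*(X_1\cdots X_s)$. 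In other words, one separates the variables first and only then uses $\F_q$-linearity, one variable at a time. With this repair (the same identity the paper uses silently, e.g.\ in the proof of Lemma \ref{Simon}), your argument is complete and coincides with the paper's.
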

\begin{proof}
The first point and equality $L(1,s,z).X_1\cdots X_sZ = \mathfrak L(X_1\cdots X_s, Z)$ are clear. The equality $L(1,s,z).X_1\cdots X_sZ= \log_C(\Ss_s(\XX,Z))$ comes from Equation \eqref{equalogz}.
Equation $\eqref{equaexpz}$ shows that $\sigma_s(\ttt,z). X_1\cdots X_s Z =  \Ss_s(\XX,Z)$ and the fact that $\sigma_s(\ttt,z) \in A[\ttt,z]$ is a consequence of Lemma \ref{leminjZ} and Theorem \ref{logalgthm}.
\end{proof}
We call the special polynomial $\sigma_s(\ttt,z)$ the \emph{Anderson-Stark unit of level $s$}.

The evaluation at $Z=1$ leads to :
\begin{prop}
\begin{enumerate}
\item $\exp_C(\Gamma(X_1\cdots X_s)) = \Ss_s(\XX,1)$,
\item if $s<q$, then $\Gamma(X_1\cdots X_s) = \log_C(X_1\cdots X_s)$.
\end{enumerate}
\end{prop}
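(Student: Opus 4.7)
The approach: both parts are near-immediate consequences of Proposition \ref{propLsigma} together with explicit evaluation at $z=Z=1$.

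For part (1), start from the identity $\sigma_s(\ttt,z).(X_1\cdots X_s Z)=\Ss_s(\XX,Z)$ of Proposition \ref{propLsigma}(3). Since $\sigma_s(\ttt,z)=\exp_z(L(1,s,z))$, Equation \eqref{equaexpz} rewrites the left-hand side as $\exp_C\bigl(L(1,s,z).(X_1\cdots X_s Z)\bigr)$, giving
\[\exp_C\bigl(L(1,s,z).(X_1\cdots X_s Z)\bigr)=\Ss_s(\XX,Z)\quad\text{in } A[\XX,Z].\]
I then specialize at $z=Z=1$. The right-hand side becomes simply $\Ss_s(\XX,1)$. For the left, Lemma \ref{lemevalz} reduces the argument of $\exp_C$ to $L(1,s).(X_1\cdots X_s)=\Gamma(X_1\cdots X_s)$, and Lemma \ref{lemevaltau} (with $\eta=\exp_z$, $\eta^1=\exp_C$) lets $\exp_C$ commute with this evaluation. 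The convergence hypothesis of Lemma \ref{lemevaltau} is satisfied because $\|L(1,s,z)\|_\infty\le 1$ (each coefficient of $z^d$ has Gauss norm $q^{-d}$) and $|D_j^{-1}|_\infty=q^{-jq^j}$ dominates $q^{sq^j/(q-1)}$. This yields $\exp_C(\Gamma(X_1\cdots X_s))=\Ss_s(\XX,1)$.

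For part (2), assume $s<q$. Proposition \ref{propSs}(2) gives $\Ss_s(\XX,1)=X_1\cdots X_s$, so by part (1),
\[\exp_C(\Gamma(X_1\cdots X_s))=X_1\cdots X_s.\]
I propose to invert $\exp_C$ via $\log_C$, working in $\BB_s^{\lin}(\Cinf)$. From $\tau(X_i)=X_i^q=(t_i-\theta).X_i$ one derives $\tau^j(X_1\cdots X_s)=b_j(t_1)\cdots b_j(t_s).(X_1\cdots X_s)$, so by Lemma \ref{isomElin}(2) the $j$-th term of $\log_C(X_1\cdots X_s)=\sum_{j\ge 0} l_j^{-1}\tau^j(X_1\cdots X_s)$ has norm $q^{(s-q)(q^j-1)/(q-1)+s/(q-1)}$, using $|l_j|_\infty=q^{q(q^j-1)/(q-1)}$ and $\|b_j(t_i)\|_\infty=q^{(q^j-1)/(q-1)}$. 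This tends to $0$ precisely because $s<q$, so the series converges. The formal identity $\log_C\cdot\exp_C=1$ in $K[[\tau]]$ then yields $\Gamma(X_1\cdots X_s)=\log_C(X_1\cdots X_s)$.

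The proof is essentially bookkeeping: the main task is to track the several incarnations of the Carlitz operators (the usual $\exp_C,\log_C$ on $\Cinf$, their extensions to $\BB_s$ via \eqref{equaexp}, and the $z$-version $\exp_z,\log_z$ on $\TT_s(E)[[z]]$) and the two independent evaluations $z=1$ and $Z=1$. The role of the hypothesis $s<q$ in (2) is transparent in this framework: it is exactly the convergence bound for $\log_C$ on an element of norm $q^{s/(q-1)}$, analogous to the classical condition $\vinf>-q/(q-1)$ for the Carlitz logarithm on $\Cinf$.
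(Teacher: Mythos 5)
Your part (1) is, up to bookkeeping, the paper's own proof: the same chain Proposition \ref{propLsigma}, Equations \eqref{equaexpz} and \eqref{equaexp}, and Lemmas \ref{lemevalz} and \ref{lemevaltau}, with the same convergence estimate justifying the hypothesis of Lemma \ref{lemevaltau}; nothing to add there.

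For part (2) you take a genuinely different route, and it contains the one step that needs more care. The paper does not pass through part (1): it uses Proposition \ref{propLsigma}(2) together with $\Ss_s(\XX,Z)=X_1\cdots X_sZ$ to get the identity $L(1,s,z).(X_1\cdots X_sZ)=\sum_{n\ge 0} l_n^{-1}(X_1\cdots X_sZ)^{q^n}$ formally in $Z$, and then only has to check convergence of both sides at $Z=1$ via $\|X_1\cdots X_s\|=q^{s/(q-1)}<q^{q/(q-1)}$; no inversion of $\exp_C$ in $\BB_s$ is ever needed. You instead start from $\exp_C(\Gamma(X_1\cdots X_s))=X_1\cdots X_s$ and invert. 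The assertion ``the formal identity $\log_C\exp_C=1$ in $K[[\tau]]$ then yields $\Gamma(X_1\cdots X_s)=\log_C(X_1\cdots X_s)$'' is precisely what must be argued: convergence of $\log_C$ at the value $X_1\cdots X_s$ (which is all you verify) does not by itself give $u=\log_C(\exp_C(u))$ for $u=\Gamma(X_1\cdots X_s)$. You need the composition identity $\log_C\circ\exp_C=\mathrm{id}$ on the ball $\|u\|<q^{q/(q-1)}$ of $\BB_s$ (or injectivity of $\exp_C$ there plus $\exp_C(\log_C(X_1\cdots X_s))=X_1\cdots X_s$), and the paper establishes such identities only on $\Cinf$ and, $Z$-adically, on $X_sK[X_1,\dots,X_{s-1}][[X_s]]$, not on $\BB_s$. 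The missing fact is true and cheap with the estimates you already have: since $\Gamma$ is an isometry, $\|u\|=q^{s/(q-1)}$, and submultiplicativity of $\|\cdot\|$ shows the double family $l_i^{-1}\tau^i\bigl(D_j^{-1}\tau^j(u)\bigr)$ is summable, so one may regroup by $i+j$ and apply the formal identity. So your argument is correct in substance once that one- or two-line summability remark is inserted; the paper's route avoids the inversion entirely and is shorter here, while yours has the mild appeal of deducing (2) purely from (1) and the triviality of $\Ss_s$ for $s<q$.
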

\begin{proof}
Lemma \ref{lemevaltau} shows that $\sigma_s(\ttt,1) = \exp_C(L(1,s))$, and Equation \ref{equaexp} yields to the first point.
For the second point, we remark that if $s<q$, then $\Ss_s(\XX,Z)=X_1\cdots X_sZ $ so that 
\[ \mathfrak L(X_1\cdots X_s, Z) = \log_C(\Ss_s(\XX,Z)) = \sum_{n\ge 0} \frac {(X_1 \cdots X_s Z)^{q^n}}{l_n}\]
but $\|X_1\cdots X_s\|=q^{\frac s {q-1}}<q^{\frac q {q-1}}$ so that $\Gamma(X_1\cdots X_s) = \sum_{n\ge 0} \frac {(X_1 \cdots X_s)^{q^n}}{l_n} = \log_C(X_1\cdots X_s)$ converges in $\BB_s(\Kinf)$.
\end{proof}
We can recover properties of $\sigma_s$ from the ones of $\Ss_s$.
\begin{prop}
\begin{enumerate}
\item $\deg_z(\sigma_s(\ttt,z)) \le \frac {s-1}{q-1}$,
\item $z-1$ divides $\sigma_s(\ttt,z)$ if, and only if, $s\equiv 1 \mod q-1$ and $s>1$.
\end{enumerate}
\end{prop}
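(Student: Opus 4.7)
The plan is to deduce both properties of $\sigma_s$ directly from the corresponding properties of $\Ss_s$ established in Proposition \ref{propSs} and Lemma \ref{nulSs}, using the identity $\sigma_s(\ttt,z).(X_1\cdots X_s Z) = \Ss_s(\XX,Z)$ from Proposition \ref{propLsigma} together with the injectivity results of Section 3.

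For the first point, write $\sigma_s(\ttt,z) = \sum_{k\ge 0}\sigma_{s,k}(\ttt) z^k$ with $\sigma_{s,k}\in A[\ttt]$. By the definition of the $E[\ttt][[z]]$-action on $E[\XX][[Z]]$, multiplication by $z^k$ corresponds to $Z\mapsto Z^{q^k}$, so
\[\Ss_s(\XX,Z) = \sigma_s(\ttt,z).(X_1\cdots X_s Z) = \sum_{k\ge 0} \bigl(\sigma_{s,k}(\ttt).(X_1\cdots X_s)\bigr)\,Z^{q^k}.\]
If $d=\deg_z \sigma_s$, then $\sigma_{s,d}\ne 0$, and by the injectivity in Lemma \ref{isomElin} (or Lemma \ref{leminjZ}), the coefficient $\sigma_{s,d}(\ttt).(X_1\cdots X_s)$ of $Z^{q^d}$ is nonzero. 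Hence $\deg_Z \Ss_s \ge q^d$. Combining with Proposition \ref{propSs}, which gives $\deg_Z \Ss_s \le q^{(s-r)/(q-1)}$ where $r\in\{1,\dots,q-1\}$ with $s\equiv r\pmod{q-1}$, we obtain $d \le (s-r)/(q-1) \le (s-1)/(q-1)$.

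For the second point, $(z-1)$ divides $\sigma_s(\ttt,z) \in A[\ttt,z]$ if and only if $\sigma_s(\ttt,1)=0$ in $A[\ttt]$. Evaluating the identity of Proposition \ref{propLsigma} at $Z=1$ via Lemma \ref{lemevalz} yields $\sigma_s(\ttt,1).(X_1\cdots X_s) = \Ss_s(\XX,1)$, and by the injectivity of $f\mapsto f.(X_1\cdots X_s)$ (Lemma \ref{isomElin}), this vanishes precisely when $\Ss_s(\XX,1)=0$. Lemma \ref{nulSs} then gives the stated characterization: this happens if and only if $s\ge 2$ and $s\equiv 1\pmod{q-1}$.

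Neither step looks delicate once the translation dictionary between the $E[\ttt][[z]]$-action and the $E[\XX][[Z]]$-action is in place; the only point that needs care is the matching of degrees, namely that $z^k$ on the $\sigma$-side produces $Z^{q^k}$ on the $\Ss$-side, so the bound on $\deg_Z\Ss_s$ translates logarithmically (base $q$) into a bound on $\deg_z\sigma_s$. Once this observation is made, everything else is a direct citation of earlier results.
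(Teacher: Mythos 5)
Your proof is correct and follows the same route as the paper, which simply cites Proposition \ref{propSs} for the degree bound and Lemma \ref{nulSs} for the vanishing at $z=1$; you have merely made explicit the translation step (injectivity of $f\mapsto f.(X_1\cdots X_s)$ and the fact that $z^k$ corresponds to $Z^{q^k}$) that the paper leaves implicit.
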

\begin{proof}
The first point comes from Proposition \ref{propSs} and the second one from Lemma \ref{nulSs}.
\end{proof}
Note that in practice, formulas for $\sigma_s$ are more manageable and easier to compute than the formulas for $\Ss_s$. Compare the following example with Example \ref{exSs} :
\begin{exe}
If $1\le s \le q-1$, then $\sigma_s=1$. The next two polynomials are : $\sigma_q = 1-z$ and $\sigma_{q+1} = 1-(t_1-\theta)\cdots(t_{q+1}-\theta)z$.
\end{exe}

In the spirit of Lemma \ref{Simon}, we can recover the values $L(N,s,z)$ for $N\le 0$ from the polynomials $\Ss_s(\XX,Z)$ :
\begin{thm}
For all $N\ge 0$ and $s\ge 1$,
\[L(-N,s,z).(X_1\cdots X_s) =  \frac d{dX_{s+1}} \cdots \frac d{dX_{s+N+1}} \Ss_{s+N+1}(X_1, \cdots, X_{s+N+1},Z). \]
\end{thm}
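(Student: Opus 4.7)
The plan is to exploit the $\F_q$-linearity of $\Ss_{s+N+1}(\XX,Z)$ in each of its variables $X_1,\dots,X_{s+N+1},Z$ established in Proposition \ref{propSs}. Because every monomial of $\Ss_{s+N+1}$ has each variable appearing with exponent a power of $q$, and $qX_j^{q-1}=0$ in characteristic $p$, the derivative $\frac{d}{dX_j}$ annihilates any monomial in which $X_j$ appears with exponent $q^k$ for $k\ge 1$ while reading off the coefficient of $X_j$ from the remaining (linear-in-$X_j$) terms. Iterated for $j=s+1,\dots,s+N+1$, the operator $\frac{d}{dX_{s+1}}\cdots\frac{d}{dX_{s+N+1}}$ simply extracts from $\Ss_{s+N+1}$ the coefficient of the multilinear monomial $X_{s+1}X_{s+2}\cdots X_{s+N+1}$.

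Next I would reduce the computation of this coefficient to the ``pre-exponential'' series $l(X_1\cdots X_{s+N+1},Z)$. Writing $\Ss_{s+N+1}=\exp_C(l(X_1\cdots X_{s+N+1},Z))=\sum_{j\ge 0} l(X_1\cdots X_{s+N+1},Z)^{q^j}/D_j$, I observe that for $j\ge 1$ every monomial of $l^{q^j}$ has each variable raised to an exponent divisible by $q^j$, and so cannot contribute anything linear in $X_{s+1},\dots,X_{s+N+1}$. Only the $j=0$ term survives, hence the iterated derivative of $\Ss_{s+N+1}$ equals that of $l(X_1\cdots X_{s+N+1},Z)$. Expanding
\[l(X_1\cdots X_{s+N+1},Z)=\sum_{d\ge 0}Z^{q^d}\sum_{a\in A_{+,d}}\frac{C_a(X_1)\cdots C_a(X_{s+N+1})}{a}\]
and using $C_a(X_j)\equiv aX_j\pmod{X_j^q}$ (so that $\frac{d}{dX_j}C_a(X_j)=a$), each of the $N+1$ derivatives produces a factor $a$, while the surviving product reads $C_a(X_1)\cdots C_a(X_s)=a*(X_1\cdots X_s)$. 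The right-hand side therefore becomes $\sum_{d\ge 0}Z^{q^d}\sum_{a\in A_{+,d}}a^N\, a*(X_1\cdots X_s)$.

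It remains to recognise this as the action of $L(-N,s,z)$ on the factor $X_1\cdots X_s\,Z$: by Lemma \ref{Simon} the series $L(-N,s,z)=\sum_{d\ge 0}z^d\sum_{a\in A_{+,d}}a^N a(t_1)\cdots a(t_s)$ is a polynomial in $z$, and the action rules $z^d.Z=Z^{q^d}$ together with $a(t_j).X_j=C_a(X_j)$ produce exactly the same sum. The main obstacle I anticipate is not conceptual but one of careful bookkeeping: one must fix the implicit identification between the $z$ on the left-hand side and the $Z$ on the right-hand side (most naturally by attaching a hidden factor $Z$ to $X_1\cdots X_s$ and letting $\TT_s[z]$ act on $\BB_s[[Z]]$ through $z^d.Z=Z^{q^d}$), and one must apply the ``high $q$-powers do not contribute to a multilinear coefficient'' argument to an actual polynomial rather than to a formal series. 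This is legitimate here precisely because $\Ss_{s+N+1}\in A[\XX,Z]$ by Theorem \ref{logalgthm}.
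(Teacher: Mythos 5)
Your proof is correct and follows essentially the same route as the paper: both arguments boil down to the facts that $\frac{d}{dX}(a*X)=a$ and that differentiation annihilates $q^j$-th powers, so only the $j=0$ term of $\exp_C$ survives and the iterated derivative of $\Ss_{s+N+1}$ reduces to that of $l(X_1\cdots X_{s+N+1},Z)$, yielding $\sum_{k\ge 0}Z^{q^k}\sum_{a\in A_{+,k}}a^N\,a*(X_1\cdots X_s)$ (the paper does this coefficient-wise via $Z_k$, you do it on the whole series). Your remark about the implicit factor $Z$ on the left-hand side is the right reading of the statement and consistent with the conventions of Proposition \ref{propLsigma}.
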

\begin{proof}
Since $\frac d {dX}(a*X) = a$, we have :
\[ \frac d{dX_{s+1}} \cdots \frac d{dX_{s+N+1}} Z_k(X_1\cdots X_{s+N+1}) = \sum_{a\in A_{+,k}} a*(X_1\cdots X_{s})a^{N}\]
which gives the result.
\end{proof}

\section{Special $L$-values}

The purpose of this section is to express the series $L(N,s,z)$ as sums of polylogarithms. The idea here is to use the fact that if we evaluate $t_{n+1}, \dots, t_s$ at $\theta$ in $\varphi^r(L(1,s,z)) = L(q^r,s,z)$, we just obtain $L(q^r+n-s,n,z)$.

If $P$ is a polynomial in a variable among $t, t_1, \dots,$ or $t_s$, we will write $P^{\varphi}$ for $\varphi(P)$.

\begin{lem}
For all integers $k\ge 0$ and $r\ge 1$, we have :
\begin{enumerate}
\item $ \varphi^r(b_k(t)) = \frac {b_{k+r}}{b_r} = \frac{b^\varphi_{k+r-1}}{b_{r-1}^\varphi}$,
\item $b_{k+r}(t) = \varphi^k(b_r(t)) b_k(t)$,
\item $b_k^\varphi(\theta) = l_k$.
\end{enumerate}
\end{lem}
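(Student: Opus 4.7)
The plan is to observe that all three identities are essentially bookkeeping on the product formula
$b_d(t)=\prod_{i=0}^{d-1}(t-\theta^{q^i})$,
combined with the fact that $\varphi$ acts coefficient-wise on $E[t]$ and so sends $t-\theta^{q^i}$ to $t-\theta^{q^{i+1}}$; no non-trivial argument is needed, only careful re-indexing of products. I would prove the three parts in the order (2), (1), (3) since (2) is the cleanest statement and supplies the factorisation used in (1).

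For (2), I would split the product defining $b_{k+r}(t)$ at the index $k$:
\[
b_{k+r}(t)=\prod_{i=0}^{k-1}(t-\theta^{q^i})\cdot \prod_{i=k}^{k+r-1}(t-\theta^{q^i})=b_k(t)\cdot\prod_{j=0}^{r-1}(t-\theta^{q^{k+j}}),
\]
and identify the second factor with $\varphi^k(b_r(t))$ via the definition of $\varphi$. For (1), applying $\varphi^r$ to $b_k(t)=\prod_{i=0}^{k-1}(t-\theta^{q^i})$ directly yields $\prod_{i=0}^{k-1}(t-\theta^{q^{i+r}})$, which by a shift of index is $\prod_{j=r}^{k+r-1}(t-\theta^{q^j})=b_{k+r}(t)/b_r(t)$; the second identity $b_{k+r-1}^\varphi/b_{r-1}^\varphi$ is the same product rewritten using $b_m^\varphi(t)=\prod_{i=1}^{m}(t-\theta^{q^i})$.

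For (3), I would write $b_k^\varphi(t)=\prod_{i=1}^{k}(t-\theta^{q^i})$, evaluate at $t=\theta$ to get $\prod_{i=1}^{k}(\theta-\theta^{q^i})$, and compare with the recursion $l_0=1$, $l_i=(\theta-\theta^{q^i})l_{i-1}$, which unfolds to exactly the same product.

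There is no real obstacle; the only thing to be careful about is the convention $b_0(t)=1$ (so that the $k=0$ or $r=0$ edge cases of (1) and (2) remain valid as empty products) and the precise description of the $\varphi$-action, which shifts $\theta^{q^i}$ to $\theta^{q^{i+1}}$ but fixes $t$. Once these conventions are pinned down the proof reduces to three one-line product manipulations.
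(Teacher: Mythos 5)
Your proof is correct: the paper simply leaves these identities "to the reader," and your direct product manipulations (splitting and re-indexing the product $b_d(t)=\prod_{i=0}^{d-1}(t-\theta^{q^i})$, using that $\varphi$ fixes $t$ and sends $\theta^{q^i}$ to $\theta^{q^{i+1}}$, and comparing with the recursion for $l_k$) are exactly the intended verification. The only point you use tacitly is that $\varphi$ is multiplicative, which holds because $x\mapsto x^q$ is a ring homomorphism in characteristic $p$; worth one sentence, but not a gap.
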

\begin{proof}
The verification of these identities is left to the reader.
\end{proof}

We start from the first point of Proposition \ref{propLsigma}, and we write $\sigma_s(\ttt,z) = \sum_{i=0}^m \sigma_{s,i}(\ttt)z^i$ :
\begin{eqnarray*}
L(1,s,z) &=& \log_z(\sigma_s(\ttt,z)) \\
&=& \sum_{k\ge 0} \frac 1 {l_k} \tau_z^k(\sigma_s(\ttt,z)) = \sum_{k\ge 0} \sum_{i=0}^m\frac{z^{k+i}}{l_k} \tau^k( \sigma_{s,i}(\ttt))\\
&=& \sum_{k\ge 0} \sum_{i=0}^m\frac{z^{k+i}}{l_k} b_k(t_1)\cdots b_k(t_s)\varphi^k( \sigma_{s,i}(\ttt)).
\end{eqnarray*}
If we apply $\varphi^r$ on both sides, we get :
\begin{eqnarray*}
L(q^r,s,z) &=& \sum_{k\ge 0} \sum_{i=0}^m\frac{z^{k+i}}{l_k^{q^r}} \varphi^r(b_k(t_1)\cdots b_k(t_s))\varphi^{k+r}( \sigma_{s,i}(\ttt))\\
&=& \sum_{i=0}^m\sum_{k\ge 0}  \frac{z^{k+i}}{l_k^{q^r}} \frac {(b_{k+r}(t_1)\cdots b_{k+r}(t_n))b_{k+r-1}^\varphi(t_{n+1})\cdots b_{k+r-1}^\varphi(t_{s})}{b_r(t_1)\cdots b_r(t_n).b_{r-1}^\varphi(t_{n+1})\cdots b_{r-1}^\varphi(t_{s})}  \varphi^{k+r}( \sigma_{s,i}(\ttt)).
\end{eqnarray*}
Write $\sigma_{s,i}(\ttt) = \sum_{i_{n+1}, \dots, i_{s}} f_{i_{n+1}, \dots, i_s} t_{n+1}^{i_{n+1}}\cdots t_s^{i_s}$ with $f_{i_{n+1},\ldots,i_s}\in A[t_1,\ldots,t_n]$ and for $j\ge 0$,
$g_{i,j} = \sum_{i_{n+1}+\cdots+ i_{s}=j} f_{i_{n+1}, \dots, i_s}$ so that $\sigma_{s,i}(\ttt)$ evaluated at $t_{n+1} = \cdots = t_s = \theta$ is the polynomial $\sum_{j\ge 0}\theta^j g_{i,j}$.
We now evaluate $L(q^r,s,z)$ at $t_{n+1} = \cdots = t_s = \theta$ and we write $N=q^r-s+n$ :
\begin{eqnarray*}
L(N,n,z) &=& \sum_{i=0}^m\sum_{k\ge 0}  \frac{z^{k+i}}{l_k^{N}l_{r-1}^{s-n}} \frac {b_{k+r}(t_1)\cdots b_{k+r}(t_n)}{b_r(t_1)\cdots b_r(t_n)} \sum_{j\ge 0}\theta^j \varphi^{k+r}(g_{i,j})\\
&=&  \sum_{j\ge 0}\theta^j \sum_{i=0}^m \sum_{k\ge 0}  \frac{z^{k+i}}{l_k^{N}l_{r-1}^{s-n}} \frac {b_{k+r}(t_1)\cdots b_{k+r}(t_n)}{b_r(t_1)\cdots b_r(t_n)} \varphi^{k+r}(g_{i,j})\\
&=&  \sum_{j\ge 0}\theta^j \sum_{i=0}^m \sum_{k\ge 0}  \frac{z^{k+i}}{l_k^{N}l_{r-1}^{s-n}} \frac {b_{k}(t_1)\cdots b_{k}(t_n)}{b_r(t_1)\cdots b_r(t_n)} \varphi^{k}(b_r(t_1)\cdots b_r(t_n)\varphi^r(g_{i,j}))
\end{eqnarray*}
Write now $\log_{N,z} = \sum_{k\ge 0} z^k\frac{b_k(t_1)\cdots b_k(t_n)}{l_k^N} \varphi^k = \sum_{k\ge 0}\frac 1{l_k^N}\tau_z^k$, then
\begin{eqnarray*}
L(N,n,z) &=& \frac 1{l_{r-1}^{s-n}b_r(t_1)\cdots b_r(t_n)} \sum_{j\ge 0}\theta^j  \log_{N,z} \left(\sum_{i=0}^m z^ib_r(t_1)\cdots b_r(t_n)\varphi^r(g_{i,j})\right). \end{eqnarray*}
We have proved :

\begin{thm}\label{polylogthm}
For all integers $N\in \Z$, $n\ge 1$ and $r\ge 1$ such that $q^r\ge N$,
there exist integers $m,d\ge0$, and for $0\le i \le m$, $0\le j \le d$, polynomials $g_{i,j}\in A[t_1, \dots, t_n]$ such that if for $0\le j \le d$, 
$h_j = \sum_{i=0}^m z^i b_r(t_1)\cdots b_r(t_n)\varphi^r(g_{i,j})  =  \sum_{i=0}^m z^i \tau^r(g_{i,j})$, then
\[L(N,n,z) =\frac 1{l_{r-1}^{q^r-N}b_r(t_1)\cdots b_r(t_n)} \sum_{j=0}^d\theta^j \log_{N,z}(h_j).\]
\end{thm}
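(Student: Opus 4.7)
The plan is to derive the identity from the log-algebraicity relation $L(1,s,z) = \log_z(\sigma_s(\ttt,z))$ of Proposition \ref{propLsigma}, by setting $s = q^r + n - N$ (so that $s \ge n \ge 1$ thanks to the hypothesis $q^r \ge N$), applying the Frobenius operator $\varphi^r$ to pass from $L(1,s,z)$ to $L(q^r,s,z)$, and then specializing the last $s-n$ variables $t_{n+1},\ldots,t_s$ to $\theta$ so that on the left-hand side the extra factors $a(\theta)=a$ collapse $L(q^r,s,z)$ to $L(q^r-(s-n),n,z) = L(N,n,z)$.

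To carry this out, I would first expand $\sigma_s(\ttt,z) = \sum_{i=0}^m \sigma_{s,i}(\ttt)\, z^i$ with $\sigma_{s,i} \in A[\ttt]$, rewrite $\tau_z^k$ via $\tau_z^k(f) = z^k b_k(t_1)\cdots b_k(t_s)\varphi^k(f)$, and apply $\varphi^r$ termwise. Using the identity $\varphi^r(b_k(t)) = b_{k+r}(t)/b_r(t) = b_{k+r-1}^\varphi(t)/b_{r-1}^\varphi(t)$ from the preceding lemma, I would choose the first form for the surviving variables $t_1,\ldots,t_n$ and the second form for $t_{n+1},\ldots,t_s$, since the latter will be specialized to $\theta$: the second form behaves well under $t_j=\theta$ via $b_{k+r-1}^\varphi(\theta) = l_{k+r-1}$ and $b_{r-1}^\varphi(\theta) = l_{r-1}$, whereas $b_{k+r}(\theta)=0$ from the first form would be useless.

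The specialization then turns $\sigma_{s,i}$, when written as a polynomial in $t_{n+1},\ldots,t_s$ with coefficients $f_{i_{n+1},\ldots,i_s} \in A[t_1,\ldots,t_n]$, into $\sum_j \theta^j g_{i,j}$, where $g_{i,j}$ collects the $f$-coefficients whose indices sum to $j$. The key point here is that the factor $\theta^j$ comes from the substitution $t_\bullet = \theta$ and therefore is not affected by the Frobenius $\varphi^{k+r}$ that was applied earlier, so one cleanly obtains $\sum_j \theta^j \varphi^{k+r}(g_{i,j})$. Grouping by $j$ produces an expression of the form $\sum_j \theta^j \cdot (\text{sum over } i,k)$, and I would reorganize the inner sum to recognize the polylogarithm operator $\log_{N,z}$ applied to the explicit polynomial $h_j = \sum_i z^i b_r(t_1)\cdots b_r(t_n)\,\varphi^r(g_{i,j})$.

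The main obstacle is the careful bookkeeping of the $b$- and $l$-factors that appear through the Frobenius twist and specialization: one needs $\varphi^k(b_r(t_j)) = b_{k+r}(t_j)/b_k(t_j)$ so that the $b_k(t_j)$ factor showing up in $\log_{N,z}$ cancels against the denominator, and one needs to combine the exponents on $l_k$, $l_{r-1}$, $l_{k+r-1}$ (with $s-n = q^r - N$) into the clean shape $1/(l_{r-1}^{q^r-N} l_k^N)$ predicted by the theorem. Once these $b$- and $l$-identities are checked, the common constant $1/(l_{r-1}^{q^r-N} b_r(t_1)\cdots b_r(t_n))$ factors out uniformly in $j,k$, and the identity assembles directly from the definitions of $\log_{N,z}$ and $h_j$.
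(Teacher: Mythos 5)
Your route is exactly the paper's: take $s=q^r+n-N$, apply $\varphi^r$ to $L(1,s,z)=\log_z(\sigma_s)$, specialize $t_{n+1}=\dots=t_s=\theta$, expand $\sigma_s=\sum_i\sigma_{s,i}z^i$, form the $g_{i,j}$ by grading the coefficients in $t_{n+1},\dots,t_s$ by total degree $j$ (and you correctly note that the resulting factor $\theta^j$ is not twisted by $\varphi^{k+r}$), and regroup via $b_{k+r}(t)=\varphi^k(b_r(t))b_k(t)$. The step you leave ``to be checked'', however, is exactly where the proof has to be made, and it does not reduce to an identity. With your own (correct) evaluations $b_{k+r-1}^\varphi(\theta)=l_{k+r-1}$ and $b_{r-1}^\varphi(\theta)=l_{r-1}$, each specialized variable contributes $\varphi^r(b_k(t_j))|_{t_j=\theta}=l_{k+r-1}/l_{r-1}$, so the $k$-th term of the specialized series carries the coefficient $l_k^{-q^r}(l_{k+r-1}/l_{r-1})^{q^r-N}=l_k^{-N}l_{r-1}^{-(q^r-N)}(l_{k+r-1}/l_k)^{q^r-N}$. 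This equals the clean shape $1/(l_k^{N}l_{r-1}^{q^r-N})$ only if $l_{k+r-1}=l_k$, i.e. only when $r=1$ (or trivially when $N=q^r$, so that no variable is specialized). The leftover factor $(l_{k+r-1}/l_k)^{q^r-N}$ depends on $k$, hence cannot be pulled out of the sum over $k$, nor is it of the form $\varphi^k(c)$ for a fixed $c$ absorbable into the argument of $\log_{N,z}$; so the final reassembly into $\sum_j\theta^j\log_{N,z}(h_j)$ with the constant $1/(l_{r-1}^{q^r-N}b_r(t_1)\cdots b_r(t_n))$ does not follow as you assert.

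To be clear, you are in good company: the paper's displayed computation makes the same silent replacement (after specializing, it writes the coefficient as $z^{k+i}/(l_k^{N}l_{r-1}^{s-n})$, i.e. it substitutes $l_k$ for $l_{k+r-1}$), so your sketch does mirror the paper's proof at this very point. But a blind write-up must justify precisely this step, and as stated it fails for $r\ge2$, which is the essential case since $N\le q$ already allows $r=1$. A concrete test: $q=2$, $n=1$, $N=3$, $r=2$, so $s=q=2$, $\sigma_2=1-z$, $d=0$, $h_0=b_2(t_1)(1-z)$; the asserted identity would give constant term (in $z$) equal to $1/l_1$, whereas $L(3,1,z)$ has constant term $1$. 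To complete an argument along these lines you must either keep the factor $(l_{k+r-1}/l_{r-1})^{q^r-N}$ and work with the modified operator $\sum_{k\ge0}l_{r-1}^{N-q^r}l_{k+r-1}^{q^r-N}l_k^{-q^r}\tau_z^k$ in place of $\log_{N,z}$ (which changes the shape of the final identity), or find a genuinely different way to organize the specialization; the bookkeeping you postpone is not routine, it is the crux.
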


Denote now for $N\in \Z$, $\log_N = \sum_{k\ge 0}\frac 1{l_k^N}\tau^k$.

\begin{cor}
For all integers $N\in \Z$, $n\ge 1$ and $r\ge 1$ such that $q^r\ge N$, there exist integers $m,d\ge0$, and for $0\le i \le m$, $0\le j \le d$, polynomials $G_{i,j}\in A[X_1, \dots, X_n]^{\lin}$ such that if for $0\le j \le d$, 
$H_j = \sum_{i=0}^m Z^{q^i} \tau^r(G_{i,j})$, then
\[L(N,n,z) .\left( X_1^{q^r}\cdots X_n^{q^r}\right)=\frac 1{l_{r-1}^{q^r-N}} \sum_{j= 0}^d\theta^j \log_{N}(H_j).\]
\end{cor}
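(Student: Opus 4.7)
The plan is to translate Theorem~\ref{polylogthm}, stated as an identity in $K[\ttt][[z]]$, into one in $\BB_n(E)[[Z]]$ by letting each side act on the element $X_1\cdots X_n\,Z$ via the action of $\TT_n(E)[[z]]$ on $\BB_n(E)[[Z]]$ defined in Section~5.1. (To match the formulas, the LHS of the displayed equality should be read as $L(N,n,z).(X_1^{q^r}\cdots X_n^{q^r}\,Z)$, the factor $Z$ being implicit since $H_j$ on the right involves $Z$.) After multiplying both sides of Theorem~\ref{polylogthm} by $b_r(t_1)\cdots b_r(t_n)$, I compute the LHS as follows: by the identity $\tau^r(f) = b_r(t_1)\cdots b_r(t_n)\varphi^r(f)$ from Section~5.1 and the compatibility $\tau^r(f.(X_1\cdots X_n)) = \tau^r(f).(X_1\cdots X_n)$ applied to $f=1$, one obtains $b_r(t_1)\cdots b_r(t_n).(X_1\cdots X_n) = \tau^r(X_1\cdots X_n) = X_1^{q^r}\cdots X_n^{q^r}$, so the LHS equals $L(N,n,z).(X_1^{q^r}\cdots X_n^{q^r}\,Z)$.

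For the RHS, the analogous compatibility $\tau_z^k(f).(X_1\cdots X_n\,Z) = \tau^k\bigl(f.(X_1\cdots X_n\,Z)\bigr)$ (the variant of \eqref{equaexpz} with $\log$ instead of $\exp$) yields
\[\log_{N,z}(h_j).(X_1\cdots X_n\,Z) = \sum_{k\ge 0}\frac{1}{l_k^N}\tau^k\bigl(h_j.(X_1\cdots X_n\,Z)\bigr) = \log_N(H_j),\]
where $H_j := h_j.(X_1\cdots X_n\,Z)$. Writing $h_j = \sum_{i=0}^m z^i\,\tau^r(g_{i,j})$ with $g_{i,j}\in A[\ttt]$ as provided by Theorem~\ref{polylogthm}, and using that $z^i$ acts on the $Z$-variable by $z^i.Z = Z^{q^i}$ while $\tau^r(g_{i,j})$ acts only on the $X_j$'s, one gets
\[H_j = \sum_{i=0}^m Z^{q^i}\,\tau^r(G_{i,j}), \qquad G_{i,j}:=g_{i,j}.(X_1\cdots X_n),\]
and Lemma~\ref{isomAlin} guarantees that $G_{i,j}\in A[\XX]^{\lin}$, as required.

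The whole computation is essentially bookkeeping, and there is no serious obstacle: the only mildly delicate point is verifying that $(z^i\,\tau^r(g_{i,j})).(X_1\cdots X_n\,Z)$ factors cleanly as $\tau^r(G_{i,j})\cdot Z^{q^i}$, which boils down to the fact that in this bilinear extension $z$ acts only on $Z$ while $\tau^r(g_{i,j})\in A[\ttt]$ acts only on the $\XX$-variables.
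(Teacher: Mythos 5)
Your argument is correct and is exactly the intended derivation: the paper states this corollary without proof, and the evident route is the one you take — clear the denominator $b_r(t_1)\cdots b_r(t_n)$ in Theorem~\ref{polylogthm} and apply the $\TT_n$–$\BB_n$ dictionary of Section~5.1 (the compatibility $\tau_z^k(f).(X_1\cdots X_nZ)=\tau^k(f.(X_1\cdots X_nZ))$, the identity $b_r(t_1)\cdots b_r(t_n).(X_1\cdots X_n)=X_1^{q^r}\cdots X_n^{q^r}$, and Lemma~\ref{isomAlin} for the integrality of $G_{i,j}$). Your observation that the factor $Z$ is implicit on the left-hand side of the displayed formula is also the right reading.
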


\end{document}